\title[Concentration inequalities for Poisson processes]{Concentration inequalities for Poisson point processes with application to adaptive intensity estimation}
\author{Martin Kroll}
\date{\normalsize \today} 
\begin{document}
	
\maketitle

\begin{abstract}
\noindent We derive concentration inequalities for maxima of empirical processes associated with Poisson point processes. 
The proofs are based on a careful application of Ledoux's entropy method.
We demonstrate the utility of the obtained concentration inequalities by application to adaptive intensity estimation.
\end{abstract}

\keywords{Poisson point process, concentration inequality, entropy method, adaptive estimation}

\vspace{4pt}

\normalsize


\section{Introduction}

Poisson point processes (PPPs) are of fundamental importance in probability theory and statistics, both from a theoretical and an applied point of view.
For instance, they serve as the elementary building blocks for complex point process models which are used in stochastic geometry~\cite{stoyan2013stochastic} and a wide range of applications including, amongst others, extreme value theory~\cite{resnick1987extreme}, finance~\cite{mikosch2009modelling}, forestry~\cite{penttinen2000recent}, and queueing theory~\cite{bremaud1981point}.

This paper is divided into two parts.
In the first one, inspired by results in~\cite{klein2005concentration}, we derive concentration inequalities for maxima of empirical processes associated with a PPP.
In the second part, we demonstrate the potential applicability of our results to statistics:
assuming that we observe i.i.d. PPPs with absolutely continuous intensity measure $\Lambda$, we consider the non-parametric estimation of the corresponding intensity function.
We derive optimal rates of convergence in terms of the sample size $n$ and propose a fully-data driven estimator of the intensity.
The theoretical study of the estimator is essentially based on the concentration inequalities derived in the first part.

Concentration inequalities provide upper bounds on the probability that a random variable deviates from its mean or median by a certain amount:
the well-known inequalities by Markov, Hoeffding and Bernstein provide classical examples of such inequalities.
Modern research on concentration inequalities goes back at least to the 1970s when concentration inequalities for deviations from the mean and median for Lipschitz continuous functions of multivariate Gaussian random variables were derived:
the papers~\cite{borell1975brunn} and~\cite{sudakov1978extremal} consider deviations from the median, whereas~\cite{cirelson1976norms} deals with deviations from the mean.
The main advantage of these results is that they do not depend on the dimension of the underlying Euclidean space and allow to control suprema of Gaussian processes.

In a series of papers, Talagrand~\cite{talagrand1995concentration, talagrand1996new} developed concentration inequalities for suprema of empirical processes associated with random variables on general product spaces.
His original proof of the \emph{Talagrand inequality} (see Theorem~1 in~\cite{massart2000about}, for instance) is rather technical and essentially based on geometric arguments. 
Ledoux~\cite{ledoux1996talagrands} proposed the \emph{entropy method} as an essentially different and more accessible approach to prove Talagrand's results.
However, the paper~\cite{ledoux1996talagrands} regained Talagrand's result with different numerical constants and variance factor only.
Massart's~\cite{massart2000about} approach is also based on Ledoux's method and a careful adaption of \emph{Gross's logarithmic Sobolev inequality} to a non-Gaussian framework and led to reasonable numerical constants in Talagrand type inequalities when the underlying random variables are independent but eventually non-identically distributed.
The paper~\cite{bousquet2003concentration} proves Talagrand type inequalities for sub-additive functions using the entropy method.
Finally, in the same setup as in~\cite{massart2000about}, Klein and Rio \cite{klein2005concentration} obtained concentration results by an elaborate application of the entropy method.
In addition, they provided a brief comparison of the numerical constants occurring in different papers on the subject showing that their results are optimal in some sense.
The main motivation of the first part of this paper is to transfer the results from \cite{klein2005concentration} to the setup with PPPs.

There is already some amount of research papers dealing with concentration inequalities for point processes, see for instance~\cite{bobkov1998modified, houdre2002remarks, houdre2002concentration} and~\cite{wu2000new}.
As pointed out in~\cite{reynaud2003adaptive} (cf.~p.~109 therein), the main drawback of all these results is that they provide a variance term that is difficult to deal with in statistical applications.
Recent concentration results have also been motivated by applications in stochastic geometry and  \cite{bachmann2016concentration,bachmann2016componentcounts}.

The scope of the paper~\cite{reynaud2003adaptive}, however, is similar to the one of our paper:
it is inspired by Talagrand type results from~\cite{massart2000about} and derives analogous results in a model with PPPs with exactly the same numerical constants as in the non-PPP setup. 
Similarly, the main intention of the present paper is to transfer the concentration inequalities from~\cite{klein2005concentration} to a PPP framework (again, by keeping exactly the same numerical constants as in~\cite{klein2005concentration}; see Remark~\ref{remCompReynaud} for a direct comparison of our result with the one of \cite{reynaud2003adaptive}).

Let us emphasize that it is not possible to apply the results from~\cite{klein2005concentration} immediately, that is, interpreting point processes as random variables in the space of locally finite measures, in order to obtain our results.
The stochastic integral of a constant function with respect to a PPP provides an example that can be dealt with by our result but does not fit into the framework of~\cite{klein2005concentration} (see Remark~\ref{rem:counterexample} below).
Furthermore, we do currently not see how one could derive the results of our paper via a simple Poissonization argument (however, also in the work \cite{reynaud2003adaptive}, the proof of the concentration inequalities was not based on such an argument).
To obtain our results, one key argument of the proof is borrowed from~\cite{reynaud2003adaptive}: the \emph{infinite divisibility} of PPPs is exploited and the underlying probability space split into two parts: on the first one, the proofs from~\cite{klein2005concentration} can be mimicked to a great extent (making a careful adaptation of the many auxiliary results in~\cite{klein2005concentration} necessary), whereas the probability of the second one can be neglected asymptotically.
Based on our main results, we derive Proposition~\ref{prop:conc} in the appendix as an auxiliary concentration result which represents a key tool for our statistical application: non-parametric adaptive intensity estimation.

Intensity estimation in parametric and non-parametric models has been dealt with in a wide range of monographs and research papers.
For a general treatment of the subject, we refer to~\cite{karr1991point} as a general introduction to the statistics of point processes,~\cite{kutoyants1998statistical} for examples of intensity estimation in different parametric and nonparametric models, and~\cite{moller2004statistical} for estimation in general spatial models.
Early approaches to nonparametric intensity estimation include kernel~\cite{rudemo1982empirical, kutoyants1998statistical} and histogram estimators~\cite{rudemo1982empirical}. 
In addition, the paper~\cite{rudemo1982empirical} already discusses adaptive estimation of the intensity.
Baraud and Birg\'e~\cite{baraud2009estimating} consider a Hellinger type loss function and propose a histogram estimator for intensity estimation.
Other contributions focus on non-linear wavelet thresholding techniques, cf., for instance, the articles~\cite{kolaczyk1999wavelet, willett2007multiscale, reynaud2010near, sansonnet2014wavelet} and~\cite{bigot2013intensity}.
The paper~\cite{bigot2013intensity} proposes a non-linear hard thresholding estimator for intensity estimation from indirect observations.
Moreover, there exist other approaches to nonparametric intensity estimation in more specific models.
Let us mention the paper~\cite{gregoire2000convergence} that proposes a minimum complexity estimator in the Aalen model and~\cite{patil2004counting} that uses a wavelet approach to estimation in a multiplicative intensity model, without making a claim to be exhaustive.


In~\cite{reynaud2003adaptive}, the focus is on a model slightly different from the one we will consider: estimation of the intensity on the interval $[0,T]$ from only one observation and asymptotics for $T \to \infty$ are considered whereas we assume the availability of an independent sample of realizations of the point process on the fixed interval $[0,1]$.
Common ground of our approach and the one taken in~\cite{reynaud2003adaptive} is the use of \emph{projection estimators}.
Beyond that, the statistical methodology of the present paper is rather motivated by the procedure developed in~\cite{johannes2013adaptive} for circular deconvolution.
We derive a minimax lower bound under abstract smoothness conditions and propose a projection estimator that can attain this lower bound.
We note that the proof of the minimax lower bound given in~\cite{reynaud2003adaptive} does not hold for ellipsoids defined in terms of the trigonometric basis (cf.~the remark below Definition~4 and Proposition~3 in~\cite{reynaud2003adaptive}), whereas we derive such a minimax lower bound.
In return, the analysis in~\cite{reynaud2003adaptive} is rather general with an emphasis on wavelet methods and not tailored to the trigonometric basis which we exclusively consider in this paper.

As usual in nonparametric statistics, the performance of our proposed projection estimator crucially depends on the appropriate selection of a dimension parameter.
Since the optimal choice of this parameter depends on the unknown intensity and is thus unavailable in practise, we propose a fully-data driven selection of the dimension parameter leading to an adaptive estimator of the intensity.
Our approach is based on model selection by minimization of a penalized contrast function.
The combination of model selection techniques and concentration inequalities has attracted great attention in non-parametric statistics~\cite{barron1999risk},~\cite{comte2015estimation}.
In particular, a considerable amount of research has been devoted to Gaussian regression and density estimation frameworks. We refer to the monograph~\cite{massart2007concentration} for results and further references concerning these two frameworks.
Concentration inequalities have already been exploited in the context of non-parametric intensity estimation in the papers~\cite{reynaud2003adaptive} and~\cite{baraud2009estimating} mentioned above but their approaches are different from our one.
Taking a different point of view, the paper~\cite{birge2007model} introduces an approach to model selection via hypothesis testing in the framework of intensity estimation.
Following the model selection approach as in~\cite{barron1999risk}, we obtain an adaptive estimator that attains the optimal rate of convergence for intensities belonging either to some Sobolev space or some space of (generalized) analytic functions.  

The paper is organized as follows. 
In Section~\ref{s:conc}, we derive concentration inequalities for deviations from the mean for empirical processes associated with PPPs.
Section~\ref{s:adaptive} deals with nonparametric intensity estimation: Subsection~\ref{subs:framework} introduces the model, in Subsection~\ref{subs:minimax} we derive the minimax theory, and in Subsection~\ref{subs:adaptive} we study the adaptive estimator.
Most of the proofs deferred to the appendix.

\section{Concentration inequalities for Poisson point processes}\label{s:conc}

Let $N$ be a PPP with finite intensity measure $\Lambda$ on some Polish space $\X$.
We denote the underlying probability space with $(\Omega, \As, \P)$.
Let $\Sc$ be a countable class of measurable functions from $\X$ to $[-1,1]$.
For $s \in \Sc$, let us define
\begin{equation}\label{eq:I(s)}
	I(s) = \int_{\X} s(x) (\dd N(x) - \dd\Lambda(x)).
\end{equation}
The first aim of this paper is to establish concentration inequalities for $Z =\sup_{s \in \Sc} S_n(s)$.
In the spirit of~\cite{klein2005concentration}, concentration inequalities for right-hand side and left-hand side deviations are established separately.
As already sketched in the introduction, one key idea for the proofs of both theorems is borrowed from the article~\cite{reynaud2003adaptive}, namely to exploit the infinite divisibility of PPPs.
This property is used to break the proof for the point process case into handy pieces such that the proof from~\cite{klein2005concentration} can be mimicked.
However, adapting the variety of auxiliary results used in the proofs of~\cite{klein2005concentration} to our setup is non-trivial.
The analogues of these auxiliary results are collected in Subsections~\ref{subsubs:notation:right} and~\ref{subsec:aux:left} in the appendix.

Since the infinite divisibility is the essential ingredient for the proofs, it is not clear whether and if yes, how, concentration results as given in Theorems~\ref{thm:right} and~\ref{thm:left} can be transferred to point processes which are not infinitely divisible.
For Cox processes (which represent a natural generalization of PPPs), the proof presented here fails and the development of tools to deal with this case might be worth further investigation.
On the contrary, it should be possible to adapt the proof to frameworks with other infinitely divisible processes such as Poisson cluster processes (see~\cite{stoyan2013stochastic}, p.~151) under additional assumptions (for instance, that the number of daughter points is bounded).


The following theorem (the proof of which is given in Appendix~\ref{s:app:right}) provides concentration inequalities for right-hand side deviations of $Z$ from its mean.

\begin{thm}\label{thm:right}
	Let $N$ be a PPP on a Polish space $\X$ with \emph{finite} intensity measures $\Lambda$, and $\Sc$ be a countable class of measurable functions from $\X$ to $[-1,1]$.
	For $s \in \Sc$, define $I(s)$ as in~\eqref{eq:I(s)} and consider $Z = \sup_{s \in \Sc} I(s)$.
	Let $L_{Z}(t) = \log \E [\exp(tZ)]$ denote the logarithm of the moment-generating function of $Z$ and $V = \sup_{s \in \Sc} \var \left( I(s) \right)$. Then, for any non-negative $t$,
	\begin{enumerate}[label=\alph*)]
		\item\label{thm:right:a}
		$\begin{aligned}
		\qquad \qquad L_Z(t) \leq t \E Z + \frac t 2 \left(2 \E Z + V \right) ( \exp( (e^{2t}-1)/2 ) - 1 ).
		\end{aligned}$
	\end{enumerate}
	Setting $\upsilon = 2  \E Z + V$, we obtain that, for any non-negative $x$,
	\begin{enumerate}[label=\alph*)]
		\setcounter{enumi}{1}
		\item
		$\begin{aligned}
		\qquad \qquad \P \left( Z \geq \E Z + x \right) \leq \exp \left(- \frac{x} {4} \log (1+2\log(1+x/\upsilon)) \right),
		\end{aligned}$
		\end{enumerate}
		and,
		\begin{enumerate}[label=\alph*)]
		\setcounter{enumi}{2}
		\item\label{thm:right:c} for any $x \geq 0$,
		\begin{align*}
			\P \left( Z \geq \E Z + x \right) &\leq \exp \left( - \frac{x^2}{\upsilon + \sqrt{\upsilon^2+ 3 \upsilon x} + (3x/2)} \right)\\
			&\leq \exp \left( - \frac{x^2}{2\upsilon + 3x}\right).
		\end{align*}
		\end{enumerate}
\end{thm}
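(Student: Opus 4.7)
The plan is to adapt the entropy method of~\cite{klein2005concentration} to the PPP setting by exploiting the infinite divisibility of $N$, following the strategy indicated in the introduction and in~\cite{reynaud2003adaptive}. For every integer $m \geq 1$, we have the distributional identity $N \stackrel{d}{=} \sum_{i=1}^{m} N_i^{(m)}$ with $N_1^{(m)}, \dots, N_m^{(m)}$ i.i.d.\ PPPs of intensity $\Lambda/m$, and consequently $Z \stackrel{d}{=} \sup_{s \in \Sc}\sum_{i=1}^{m} I_i^{(m)}(s)$ where $I_i^{(m)}(s) = \int s(x)\bigl(\dd N_i^{(m)}(x) - \dd\Lambda(x)/m\bigr)$. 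On the event $A_m = \{N_i^{(m)}(\X) \leq 1 \text{ for every } i\}$, each summand $I_i^{(m)}(s)$ is a deterministic function of a single random element in $\{\emptyset\} \cup \X$, so that $Z$ (restricted to $A_m$) is the supremum of an empirical process associated with $m$ independent random variables and the Klein--Rio framework becomes directly applicable. A union bound gives $\P(A_m^c) = O(1/m)$, which permits a final passage to the limit $m \to \infty$.

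For part~\ref{thm:right:a}, the key step is to establish, on $A_m$ and for a finite subclass $\Sc_0 \subset \Sc$ (appealing later to monotone convergence to recover the supremum over all of $\Sc$), a differential inequality of the form $t L_Z'(t) - L_Z(t) \leq \tfrac{t^2}{2}(2\E Z + V)\,\psi_m(t)$ via Ledoux's entropy method. This rests on tensorisation of entropy, a modified logarithmic Sobolev inequality for the independent summands $I_i^{(m)}$, and a convex-duality argument that produces the conditional-variance factor $(2\E Z + V)$; the function $\psi_m$ converges to $\exp((e^{2t}-1)/2) - 1$ as $m \to \infty$, this Poisson-specific expression arising from the moment generating function of Poisson variables as the per-cell intensity becomes infinitesimal. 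Dividing the differential inequality by $t^2$ and integrating from $0$ produces a bound on $L_Z(t)/t - \E Z$, and letting $m \to \infty$ yields~\ref{thm:right:a}. Each auxiliary Klein--Rio inequality used in this chain has to be reproved for the PPP increments; this is done in the appendix.

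Parts~(b) and~\ref{thm:right:c} follow from~\ref{thm:right:a} by the standard Chernoff device. For any $t \geq 0$, Markov's inequality together with~\ref{thm:right:a} yields
\[
\P(Z \geq \E Z + x) \leq \exp\Bigl(-tx + \tfrac{t\upsilon}{2}\bigl(\exp((e^{2t}-1)/2) - 1\bigr)\Bigr).
\]
Choosing $t = \tfrac{1}{2}\log\bigl(1 + 2\log(1 + x/\upsilon)\bigr)$ makes the quantity $\exp((e^{2t}-1)/2) - 1$ collapse to exactly $x/\upsilon$, so that the exponent simplifies to $-tx/2$ and produces the logarithmic bound in~(b). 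For~\ref{thm:right:c}, one first uses a Taylor-type estimate, valid for $t$ below a suitable threshold, to bound $\exp((e^{2t}-1)/2) - 1$ by an expression of the Bernstein form $t/(1 - 3t/2)$, reducing the Chernoff exponent to $-tx + t^2\upsilon/(2(1 - 3t/2))$. Optimising over $t$ yields a quadratic whose positive root gives the first inequality in~\ref{thm:right:c}, and the second follows from the elementary estimate $\sqrt{\upsilon^2 + 3\upsilon x} \leq \upsilon + 3x/2$.

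The main obstacle is clearly part~\ref{thm:right:a}. Although the infinite-divisibility decomposition makes the Klein--Rio framework formally applicable on $A_m$, the per-cell increments $I_i^{(m)}(s)$ do not inherit the $[-1,1]$ bound of the original summands $s$ in an $m$-uniform way, so each step of Ledoux's entropy method (tensorisation, the Herbst argument, the convex-comparison inequalities for truncated exponentials, the separate treatment of the positive and negative parts of $I_i^{(m)}$, and the conditional-variance estimates) must be redone in a manner that survives the passage $m \to \infty$ \emph{with the same numerical constants} as in~\cite{klein2005concentration}. Controlling the small probability $\P(A_m^c)$ so that it does not contaminate these constants is the delicate bookkeeping that occupies the entire appendix.
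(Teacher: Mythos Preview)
Your overall strategy---infinite divisibility, entropy method \`a la Klein--Rio, passage to the limit---is exactly the paper's. Two concrete points, however, separate your sketch from a working proof.

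First, you propose to work on the \emph{global} event $A_m=\bigcap_i\{N_i^{(m)}(\X)\le 1\}$ and claim that Klein--Rio ``becomes directly applicable'' there. This is not how the paper (or Klein--Rio) proceeds, and it is not clear your version goes through. The tensorised entropy bound $\E[f\log f]-\E[f]\log\E[f]\le\sum_j\E[f\log(f/f_j)]$ is an identity for the \emph{unconditional} law; conditioning on $A_m$ changes the distribution of each $N_j$ (in particular the increments $I_j^{(m)}(s)$ are no longer centred), so you cannot simply invoke Klein--Rio on the conditioned variables and then undo the conditioning by letting $\P(A_m^c)\to 0$. The paper instead splits \emph{coordinate by coordinate}: for each $j$ it writes $\E[f\log(f/f_j)]=\E[f\log(f/f_j)\1_{\Omega_j}]+\E[f\log(f/f_j)\1_{\Omega_j^\complement}]$ with the \emph{local} event $\Omega_j=\{N_j(\X)\le 1\}$. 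Each bad piece is $O(\ell^{-3/2})$, so the sum of bad pieces is $O(\ell^{-1/2})$; on the good pieces the Klein--Rio machinery (Lemmata~3.2--3.5) is redone with the bound $|I_j(s)|\le 1+\Delta$ on $\Omega_j$. This local splitting is the device that makes the constants survive the limit.

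Second, the differential inequality you write down, $tL_Z'(t)-L_Z(t)\le\tfrac{t^2}{2}(2\E Z+V)\psi_m(t)$, is not the one that comes out of the argument. What the paper (and Klein--Rio) actually derive is
\[
tL'(t)-(1+te^{2t})L(t)\le \tfrac{t^2}{2}e^{2t}\,V,
\]
with only $V$ on the right and an extra $te^{2t}L(t)$ on the left; the factor $2\E Z$ enters only when this linear ODE-type inequality is integrated using $L(0)=0$, $L'(0)=\E Z$. Your stated inequality short-circuits this step and is not what the entropy computation yields. Your treatment of parts~(b) and~(c) via Chernoff and the choice $t=\tfrac12\log(1+2\log(1+x/\upsilon))$ is correct and matches Klein--Rio.
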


\begin{rem}\label{rem:counterexample}
	We emphasize that Theorem~\ref{thm:right} cannot be immediately deduced from Theorem~1.1 in~\cite{klein2005concentration}.
	For instance, for $s \equiv 1$ the stochastic integral $\int_\X s(x)\dd N(x)$ is an unbounded function of $N$ but obviously $s\equiv 1$ fits into the framework of Theorem~\ref{thm:right}.
\end{rem}

\begin{rem}\label{remCompReynaud}
	Let us compare our result with the concentration inequality derived in \cite{reynaud2003adaptive}.
	More precisely, we consider Statement~\label{thm:right:c} from Theorem~\label{thm:right}, and compare it with Corollary~2 from \cite{reynaud2003adaptive} which is most closely related to our results.
	First note that Corollary~2 in \cite{reynaud2003adaptive} is formulated for the quantity $Z = \sup_{s \in \Sc} \vert I(s) \vert$ instead of $Z = \sup_{s \in \Sc}  I(s) $ (however, in our application we will consider a symmetric set $\Sc$ and the two definitions coincide).
	Then, Corollary~2 from \cite{reynaud2003adaptive}, reads as follows:
	\begin{align*}
		\P (Z \geq (1+ \epsilon) \E Z + x) \leq \exp \bigg( -\frac{x^2}{12 \upsilon_0 +2 \kappa(\epsilon)x } \bigg)
	\end{align*}
	where $\epsilon > 0$ is arbitrary but fixed, $\kappa(\epsilon) = 5/4 + 32/\epsilon$, and
	\begin{equation*}
		\upsilon_0 = \sup_{s \in \Sc} \int s^2(x) \dd \Lambda(x).
	\end{equation*}
\end{rem}

\begin{rem}
	In analogy to Corollary~1.1 in \cite{klein2005concentration}, the assumptions of Theorem~\ref{thm:right} imply that $\var Z \leq V + 2 \E Z$.
\end{rem}

The quantity $V$ is usually referred to as the \emph{wimpy variance} (cf.~\cite{boucheron2016concentration}, Chapter~11).
In the proof of~Proposition~\ref{prop:conc} in Appendix~\ref{s:integrated}, a suitable bound for the quantity $\upsilon$ will be determined. 

For the statistical applications we have in mind, we state the in the following an immediate corollary of Theorem~\ref{thm:right}.
For the formulation of the corollary, we consider $n$ independent PPPs on the Polish space $\X$ with finite intensity measures $\Lambda_1,\ldots,\Lambda_n$.
Again we denote the common underlying probability space of the PPPs with $(\Omega, \As, \P)$.
Let further $\Sc$ be a countable class of measurable functions from $\X$ to $[-1,1]^n$.
For $s=(s^1,\ldots,s^n) \in \Sc$ and $k \in \{ 1,\ldots,n \}$, let us define
\begin{equation}\label{eq:S_n(s)}
I^k(s) = \int_{\X} s^k(x) (\dd N_k(x) - \dd \Lambda_k(x)) \quad \text{ and } \quad S_n(s) = I^1(s) + \ldots + I^n(s).
\end{equation}

\begin{cor}\label{COR:RIGHT}
	Let $N_1,\ldots,N_n$ be independent PPPs on a Polish space $\X$ with \emph{finite} intensity measures $\Lambda_1,\ldots,\Lambda_n$, and $\Sc$ be a countable class of measurable functions from $\X$ to $[-1,1]^n$.
	For $s \in \Sc$, define $S_n(s)$ as in~\eqref{eq:S_n(s)} and consider $Z = \sup_{s \in \Sc} S_n(s)$.
	Let $L_{Z}(t) = \log \E [\exp(tZ)]$ denote the logarithm of the moment-generating function of $Z$ and $V_n = \sup_{s \in \Sc} \var \left( S_n(s) \right)$. Then, for any non-negative $t$,
	\begin{enumerate}[label=\alph*)]
		\item\label{cor:right:a}
		$\begin{aligned}
		\qquad \qquad L_Z(t) \leq t \E Z + \frac t 2 \left(2 \E Z + V_n \right) ( \exp( (e^{2t}-1)/2 ) - 1 ).
		\end{aligned}$
	\end{enumerate}
	Setting $\upsilon = 2  \E Z + V_n$, we obtain that, for any non-negative $x$,
	\begin{enumerate}[label=\alph*)]
		\setcounter{enumi}{1}
		\item
		$\begin{aligned}
		\qquad \qquad \P \left( Z \geq \E Z + x \right) \leq \exp \left(- \frac{x} {4} \log (1+2\log(1+x/\upsilon)) \right),
		\end{aligned}$
	\end{enumerate}
	and,
	\begin{enumerate}[label=\alph*)]
		\setcounter{enumi}{2}
		\item\label{cor:right:c} for any $x \geq 0$,
		\begin{align*}
		\P \left( Z \geq \E Z + x \right) &\leq \exp \left( - \frac{x^2}{\upsilon + \sqrt{\upsilon^2+ 3 \upsilon x} + (3x/2)} \right)\\
		&\leq \exp \left( - \frac{x^2}{2\upsilon + 3x}\right).
		\end{align*}
	\end{enumerate}
\end{cor}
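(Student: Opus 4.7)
The plan is to reduce Corollary~\ref{COR:RIGHT} directly to Theorem~\ref{thm:right} by a standard marking / superposition construction. Consider the enlarged state space $\widetilde{\X} = \X \times \{1, \ldots, n\}$, which is again Polish (a finite disjoint union of copies of $\X$), and define the random measure $\widetilde{N}$ on $\widetilde{\X}$ by $\widetilde{N}(A \times \{k\}) = N_k(A)$. Since the $N_k$ are independent PPPs, $\widetilde{N}$ is itself a PPP on $\widetilde{\X}$ with finite intensity measure $\widetilde{\Lambda}$ determined by $\widetilde{\Lambda}(A \times \{k\}) = \Lambda_k(A)$; finiteness follows from $\widetilde{\Lambda}(\widetilde{\X}) = \sum_{k=1}^n \Lambda_k(\X) < \infty$.

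Next I would lift the function class $\Sc$ to a countable class $\widetilde{\Sc}$ of measurable functions $\widetilde{s} : \widetilde{\X} \to [-1,1]$ via $\widetilde{s}(x,k) = s^k(x)$ for $s = (s^1, \ldots, s^n) \in \Sc$. A direct computation gives
\begin{equation*}
\int_{\widetilde{\X}} \widetilde{s}(y)\,(\dd \widetilde{N}(y) - \dd \widetilde{\Lambda}(y)) = \sum_{k=1}^n \int_{\X} s^k(x)\,(\dd N_k(x) - \dd \Lambda_k(x)) = S_n(s),
\end{equation*}
so that $Z = \sup_{s \in \Sc} S_n(s) = \sup_{\widetilde{s} \in \widetilde{\Sc}} \widetilde{I}(\widetilde{s})$ in the notation of~\eqref{eq:I(s)} applied to $\widetilde{N}$. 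Moreover, by independence of the $N_k$, $\var(\widetilde{I}(\widetilde{s})) = \sum_{k=1}^n \var(I^k(s)) = \var(S_n(s))$, so the wimpy variance of the lifted process equals $V_n$.

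With this identification, Theorem~\ref{thm:right} applied to the PPP $\widetilde{N}$ on the Polish space $\widetilde{\X}$ with class $\widetilde{\Sc}$ immediately yields parts~\ref{cor:right:a}, b), and~\ref{cor:right:c} of the corollary, with the same numerical constants and with $\upsilon = 2 \E Z + V_n$. I do not expect any genuine obstacle here: the only things to double-check are that the superposition of independent PPPs is indeed a PPP on the product space (a classical fact, e.g.\ via Laplace functionals) and that countability is preserved under the obvious bijection $s \mapsto \widetilde{s}$, both of which are routine.
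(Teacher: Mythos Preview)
Your argument is correct and matches the paper's approach: the paper calls the corollary ``immediate'' from Theorem~\ref{thm:right} without spelling out a proof, and the marking/superposition construction you give is precisely the standard way to make this reduction explicit. There is nothing to add.
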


The following theorem (the proof of which is given in Appendix~\ref{s:app:left}) provides concentration inequalities for left-hand side deviations of $Z$ from its mean.

\begin{thm}\label{thm:left}
	Under the assumptions of Theorem~\ref{thm:right}, for any non-negative $t$, it holds
	\begin{enumerate}[label=\alph*)]
		\item
		$\begin{aligned}
		\qquad \qquad L_Z(-t) \leq -t \E Z + \frac \upsilon 9 (e^{3t}- 3t -1).
		\end{aligned}$
	\end{enumerate}
	Consequently, for any non-negative $x$,
	\begin{enumerate}[label=\alph*)]
		\setcounter{enumi}{1}
		\item
		$\begin{aligned}
		\qquad \qquad \P \left( Z \leq \E Z - x \right) \leq \exp \left(- \frac{\upsilon} {9} h\left( \frac{3x}{\upsilon} \right)  \right),
		\end{aligned}$
	\end{enumerate}
	where $h(x) = (1+x) \log(1+x) - x$, and
	\begin{enumerate}[label=\alph*)]
		\setcounter{enumi}{2}
		\item
		for any $x \geq 0$,
		\begin{align*}
		\qquad \P \left( Z \leq \E Z - x \right) &\leq \exp \left( - \frac{x^2}{\upsilon + \sqrt{\upsilon^2+ 2 \upsilon x} + x} \right)\\
		&\leq \exp \left( - \frac{x^2}{2\upsilon + 2x}\right).
		\end{align*}
	\end{enumerate}
\end{thm}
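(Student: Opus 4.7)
The strategy is to prove part (a) first—this is the technical heart of the theorem—and then derive (b) and (c) by Chernoff's inequality and analytic manipulation of the function $h$.

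For (a), I would mirror the proof of Theorem~\ref{thm:right} by combining Ledoux's entropy method with the infinite-divisibility reduction taken from \cite{reynaud2003adaptive}. Fix an integer $m \geq 1$ and decompose $N$ in distribution as $\sum_{i=1}^{m} N_i^{(m)}$, where the $N_i^{(m)}$ are i.i.d.\ PPPs with intensity measure $\Lambda/m$. Let $A_m$ denote the event that each $N_i^{(m)}$ has at most one atom; since $\Lambda(\X) < \infty$ one has $\P(A_m^{c}) \to 0$ as $m \to \infty$. On $A_m$ the variable $Z$ becomes a function of $m$ independent sparse point configurations, so modified log-Sobolev inequalities for Bernoulli-type functionals apply. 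Plugging the PPP-adapted auxiliary estimates collected in Subsection~\ref{subsec:aux:left} into the left-hand-deviation version of Herbst's argument yields a differential inequality for $L_Z(-t)$ restricted to $A_m$ whose integration gives the bound in (a) with an additional $A_m^{c}$-error term. Letting $m \to \infty$ and invoking dominated convergence—justified by the uniform bound $|Z| \leq 2\Lambda(\X)$ on $A_m$ together with $\P(A_m^{c}) \to 0$—removes the restriction and produces (a) in full.

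Given (a), Chernoff's inequality yields, for each $t \geq 0$,
\begin{equation*}
\P(Z \leq \E Z - x) \leq \exp\!\bigl(-tx + \tfrac{\upsilon}{9}(e^{3t} - 3t - 1)\bigr),
\end{equation*}
and the change of variable $u = 3t$ identifies the infimum over $t \geq 0$ as $-(\upsilon/9)\, h(3x/\upsilon)$, since $h$ is precisely the Fenchel--Legendre conjugate of $u \mapsto e^{u} - u - 1$. This is (b). For (c), the standard analytic lower bound $h(u) \geq u^{2}/\bigl(2(1+u/3)\bigr)$ inserted with $u = 3x/\upsilon$ yields the Bernstein-type inequality $\P(Z \leq \E Z - x) \leq \exp(-x^{2}/(2\upsilon + 2x))$, while the sharper square-root form is obtained by solving explicitly for the inverse of $h$ (equivalently, by bounding $h(u) \geq u^{2}/(1 + u/3 + \sqrt{1+2u/3})$).

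The principal obstacle is to adapt the entropy-method bookkeeping of \cite{klein2005concentration} for left-hand deviations—already delicate in the independent bounded-difference setting—to the PPP context while preserving the exact numerical constants $1/9$ and $3$ that appear in (a); this is precisely the purpose of the auxiliary results gathered in Subsection~\ref{subsec:aux:left}. A secondary but genuine difficulty is the passage $m \to \infty$: on $A_m^{c}$ the variable $Z$ is neither bounded nor conveniently exponentially integrable, so one must control this contribution separately, typically by splitting $\E[e^{-tZ}] = \E[e^{-tZ}\mathbf{1}_{A_m}] + \E[e^{-tZ}\mathbf{1}_{A_m^{c}}]$ and exploiting $e^{-tZ} \leq e^{t\Lambda(\X)}$ together with $\P(A_m^{c}) \to 0$ to render the residual term negligible in the limit.
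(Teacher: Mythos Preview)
There are two genuine gaps in your plan for part (a).

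First, you never introduce the \emph{exponentially compensated} process, which is the central device for left-hand deviations in \cite{klein2005concentration} and in the paper. The entropy method here is \emph{not} applied to $f=e^{-tZ}$ but to $f=\exp(-tZ_t)$ with $Z_t=\sup_i\bigl(I(s_i)+t^{-1}L_i(t)\bigr)$ and $L_i(t)=\log\E[\exp(-tI(s_i))]$. The compensation is what forces $\E^{\vee j}[\exp(-tI^{j}(s_i)-\ell_{ji}(t))]=1$, which is the centering property the entropy bookkeeping needs on the left tail; without it the differential inequality does not close, let alone with the exact constants $1/9$ and $3$. All the auxiliary results in Subsection~\ref{subsec:aux:left} (Lemmata~\ref{lem:psi}, \ref{lem:phi}, \ref{lem:Y}) are formulated for this compensated object, so ``plugging them into Herbst's argument for $e^{-tZ}$'' is not even type-correct.

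Second, both of your boundedness claims used for the passage $m\to\infty$ are false. On $A_m$ each piece has at most one atom, but there are $m$ pieces, so $N(\X)$ can be as large as $m$; hence $|Z|\leq N(\X)+\Lambda(\X)\leq m+\Lambda(\X)$, which blows up with $m$ and does not give $|Z|\leq 2\Lambda(\X)$. Likewise $e^{-tZ}\leq e^{t\Lambda(\X)}$ would require $Z\geq -\Lambda(\X)$ deterministically, which fails already for $\Sc=\{-1\}$ where $Z=\Lambda(\X)-N(\X)$. The paper avoids this problem altogether by \emph{not} conditioning on the global event $A_m=\bigcap_j\Omega_j$: inside the tensorized entropy $\sum_j \E[f\log(f/f_j)]$ it splits each summand individually via $\mathbf 1_{\Omega_j}+\mathbf 1_{\Omega_j^\complement}$ and bounds each $\Omega_j^\complement$-piece by $C\ell^{-3/2}$, so the total residual is $O(\ell^{-1/2})$ and vanishes as $\ell\to\infty$. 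Your derivations of (b) and (c) from (a) via Chernoff and the standard analytic bounds on $h$ are correct.
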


\begin{rem}
	The concentration inequalities in Theorems~\ref{thm:right} and~~\ref{thm:left} translate literally (that is, with exact coincidence of the numerical constants) the ones obtained in~\cite{klein2005concentration} to our framework with PPPs.
	This observation is in line with the remark made in~\cite{reynaud2003adaptive} where the derived concentration inequalities translate literally previous results due to~\cite{massart2000about}.
\end{rem}

\begin{rem}
	In many situations of interest, it is possible to apply the concentration inequalities proved in this section (and the one proved in Appendix~\ref{s:integrated}) to \emph{non-countable} classes of measurable functions.
	A rigorous foundation of this practice can be based on density arguments (see Remarque 2.1 in~\cite{chagny2013estimation}).
\end{rem}

\section{Non-parametric intensity estimation}\label{s:adaptive}

\subsection{Model assumptions}\label{subs:framework}

Let $N_1,\ldots,N_n$ be i.i.d.\ realizations of a PPP on $[0,1]$ with square-integrable intensity function $\lambda \in \L^2 = \L^2([0,1], \dd x)$.
The $N_i$ can be interpreted as $\N_0$-valued random measures which motivates the notation $N_i=\sum_{j} \delta_{x_{ij}}$ (here, $\delta_x$ denotes the Dirac measure with mass concentrated at $x$).
Our aim is to estimate the intensity function $\lambda$ from the sample $N_1,\ldots,N_n$.
We consider the orthonormal basis $\{ \phi_j \}_{j \in \Z}$ of $\L^2$ which is given by $\phi_0 = 1$, and
\begin{equation*}
	\phi_j(t)=\sqrt 2 \cos(2\pi jt), \qquad \text{resp.} \qquad \phi_{-j}(t)=\sqrt 2 \sin(2\pi j t)
\end{equation*}
for $j = 1,2,\ldots$.
Define the sequence $(\beta_j)_{j \in \Z}$ of Fourier coefficients via $\beta_j = \int_0^1 \lambda(t) \phi_j(t) \dd t$ which yields the $\L^2$-convergent representation
\begin{equation}\label{eq:lambda:fou:ser}
	\lambda = \sum_{j \in \Z} \beta_j \phi_j.
\end{equation}

In order to evaluate the performance of an arbitrary estimator $\widetilde \lambda$ of $\lambda$, we consider the mean integrated squared error
$\E [\Vert \widetilde \lambda - \lambda \Vert^2]$ (where, as usual, the expectation is taken under the true intensity function $\lambda$ and $\Vert \cdot \Vert$ denotes the $\L^2$-norm).
We hold the minimax point of view and consider the \emph{maximum risk} defined by
$\sup_{\lambda \in \Lambda} \E [\Vert \widetilde \lambda - \lambda \Vert^2]$
for some smoothness class $\Lambda$ of potential intensity functions.
The corresponding \emph{minimax risk} is defined by
\begin{equation*}
	\inf_{\widetilde \lambda} \sup_{\lambda \in \Lambda} \E [\Vert \widetilde \lambda - \lambda \Vert^2]
\end{equation*}
where the infimum is taken over all potential estimators $\widetilde \lambda$ of $\lambda$ based on the sample $N_1,\ldots,N_n$.
An estimator $\lambda^\ast$ is called \emph{rate optimal} if
	$\sup_{\lambda \in \Lambda} \E [\Vert \lambda^\ast - \lambda \Vert^2] \lesssim \inf_{\widetilde \lambda} \sup_{\lambda \in \Lambda} \E [\Vert \widetilde \lambda - \lambda \Vert^2]$,
where the notation $a_n \lesssim b_n$ means that $a_n \leq C b_n$ for some numerical constant that does not depend on $n$. 
The specific form of the class $\Lambda$ will be introduced in the following Subsection~\ref{subs:minimax}.

For the moment, let us introduce the general type of projection estimator we will consider throughout this work:
Since $\E[ \int_{0}^{1} \phi_j(t)\dd N_i(t) ] = \beta_j$ for all $j \in \Z$ by Campbell's theorem (cf., for instance,~\cite{streit2010poisson}, Chapter~2),  $\widehat{\fou{\lambda}}_j = \frac{1}{n} \sum_{i=1}^{n} \int_{0}^{1} \phi_j(t)\dd N_i(t)$ is an unbiased estimator of $\beta_j$,
Equation~\eqref{eq:lambda:fou:ser} strongly suggests to consider orthogonal series estimators of the form
\begin{equation}\label{eq:def:ose}
	\widehat \lambda_k = \sum_{0 \leq \vert j \vert \leq k} \betahat_j \phi_j,
\end{equation}
where the dimension parameter $k \in \N_0$ has to be chosen appropriately.

\begin{rem}
	The estimator $\widehat \lambda_k$ is by definition not guaranteed to attain only non-negative values (which holds for the true intensity $\lambda$).
	In practise, this undesirable feature can avoided by considering the estimator $\widehat \lambda_{k+}$ defined via $\widehat \lambda_{k+}(t) = \widehat \lambda_k(t) \vee 0$, the risk of which is evidently bounded from above by the one of $\widehat \lambda_k$.
\end{rem}

\subsection{Minimax theory}\label{subs:minimax}

In order to define the class of admissible intensity functions in the definition of the minimax risk, let $\gamma = ( \gamma_j )_{j \in \Z}$ be a strictly positive symmetric sequence of weights and $r > 0$.
Set
\begin{equation*}
\Lambda = \Lambda(\gamma,L) = \{ \lambda \in \L^2 : \lambda \geq 0 \text{ and } \sum_{j \in \Z} \gamma_j^2 \beta_j^2 \eqdef \Vert \lambda \Vert_\gamma^2 \leq L^2 \}.
\end{equation*}
In the following, our aim is to study the minimax risk with respect to the function class $\Lambda$.
Our results will be obtained under the following mild regularity assumptions on the sequence $\gamma$.

\setcounter{ass}{0}
\begin{ass}\label{ass:seq}
	$\gamma = ( \gamma_j )_{j \in \Z}$ is a strictly positive symmetric sequence such that $\gamma_0=1$ and $(\gamma_n)_{n \in \N_0}$ is non-decreasing.
\end{ass}

The following proposition provides an upper risk bound for the estimator $\widehat \lambda_k$ defined in~\eqref{eq:def:ose} under an appropriate choice of the dimension parameter $k$.

\begin{prop}\label{prop:upper}
	Let Assumption~\ref{ass:seq} hold.
	Consider the estimator $\widehat \lambda_\knast$ with dimension parameter $\knast = \argmin_{k \in \N_0} \max \{ \gamma_k^{-2}, \frac{2k+1}{n} \}$.
	Then, for any $n \in \N$,
	\begin{equation*}
	\sup_{\lambda \in \Lambda} \E [ \Vert \widehat \lambda_\knast - \lambda \Vert^2] \lesssim \Psin \defeq \max\left\lbrace  \gamma_\knast^{-2},  \frac{2\knast+1}{n} \right\rbrace
	\end{equation*}
	where the constant hidden in $\lesssim$ depends only on $L$.
\end{prop}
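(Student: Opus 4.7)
The plan is to follow a standard bias-variance decomposition for projection estimators and exploit a pointwise identity peculiar to the trigonometric basis. No concentration machinery is needed at this stage; only first and second moments.

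First, I would use Parseval's identity to split the risk:
\begin{equation*}
\E\|\widehat\lambda_k - \lambda\|^2 = \sum_{|j|\leq k} \var(\widehat\beta_j) + \sum_{|j|>k}\beta_j^2,
\end{equation*}
using that $\widehat\beta_j$ is unbiased for $\beta_j$ (already noted in the excerpt from Campbell's theorem).

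Next, I would compute the variance term. By independence of $N_1,\dots,N_n$ and a second application of Campbell's theorem, $\var(\widehat\beta_j) = \frac1n \int_0^1 \phi_j^2(t)\lambda(t)\,\dd t$. Summing over $|j|\leq k$ and swapping sum and integral gives $\frac1n \int_0^1 \bigl(\sum_{|j|\leq k}\phi_j^2(t)\bigr)\lambda(t)\,\dd t$. The key computation is the pointwise identity
\begin{equation*}
\sum_{|j|\leq k}\phi_j^2(t) = 1 + \sum_{j=1}^{k}\bigl(2\cos^2(2\pi j t) + 2\sin^2(2\pi j t)\bigr) = 2k+1,
\end{equation*}
which holds uniformly in $t$. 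Hence the variance term equals $\frac{(2k+1)\beta_0}{n}$, and since $\gamma_0=1$ the ellipsoid constraint yields $\beta_0=\int_0^1 \lambda \leq L$ (using $\lambda \geq 0$).

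For the bias term, I would use the monotonicity and symmetry of $\gamma$ from Assumption~\ref{ass:seq}: for $|j|>k$ we have $\gamma_j \geq \gamma_k$, so
\begin{equation*}
\sum_{|j|>k}\beta_j^2 \leq \gamma_k^{-2}\sum_{|j|>k}\gamma_j^2\beta_j^2 \leq L^2\gamma_k^{-2}.
\end{equation*}
Combining, $\E\|\widehat\lambda_k-\lambda\|^2 \leq L^2\gamma_k^{-2} + L(2k+1)/n \leq (L^2+L)\max\{\gamma_k^{-2},(2k+1)/n\}$. Specializing to $k = \knast$ yields the claim, with the constant hidden in $\lesssim$ depending only on $L$. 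There is no substantial obstacle; the only nontrivial input beyond textbook manipulations is the exact evaluation $\sum_{|j|\leq k}\phi_j^2 \equiv 2k+1$, which is what makes the variance bound clean and uniform over $\Lambda$.
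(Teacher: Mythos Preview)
Your proof is correct and follows essentially the same bias--variance decomposition as the paper's own proof; you simply spell out the details (the pointwise identity $\sum_{|j|\le k}\phi_j^2\equiv 2k+1$ and the bound $\beta_0\le L$) that the paper leaves implicit in its one-line ``it is easy to see that'' argument.
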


\begin{proof}
	Introduce the function $\lambda_\knast \defeq \sum_{0 \leq \vert j \vert \leq \knast} \beta_j \phi_j$ which suggests the decomposition
	\begin{equation*}
	\E [\Vert \widehat{\lambda}_\knast - \lambda \Vert^2] = \Vert \lambda - \lambda_\knast \Vert^2 + \E [\Vert \widehat \lambda_\knast - \lambda_\knast \Vert^2]
	\end{equation*}
	of the considered risk into squared bias and variance.
	Using the smoothness assumption $\lambda \in \Lambda$, it is easy to see that $\Vert \lambda - \lambda_\knast \Vert^2 \leq L^2 \gamma_\knast^{-2}$ and $\E [\Vert \widehat \lambda_\knast - \lambda_\knast \Vert]^2 \leq L \cdot \frac{2\knast + 1}{n}$ and the statement of the theorem follows.
\end{proof}

The rate-optimality of the estimator $\widehat \lambda_\knast$ considered in Proposition~\ref{prop:upper} is demonstrated by means of the following theorem which is valid under mild additional assumptions.
The proof makes use of an adaptation of standard techniques in non-parametric statistics for the derivation of minimax lower bounds to our point process framework and is deferred to Appendix~\ref{APP:LOWER}.

\begin{thm}\label{thm:lower} 
	Let Assumption~\ref{ass:seq} hold and further assume that
	\begin{enumerate}[label=\color{blue}(C\arabic*), leftmargin=1.5cm]
		\item\label{it:C1} $\Gamma = \sum_{j \in \Z} \gamma_j^{-2} < \infty$, and
		\item $0 < \eta^{-1} = \inf_{n \in \N} (\Psin)^{-1} \min \{  \gamma_\knast^{-2},  \frac{2\knast+1}{n} \}$
		for some $\eta \geq 1$,
	\end{enumerate}
	where the quantities $\knast$ and $\Psin$ are defined in Proposition~\ref{prop:upper}.
	Then, for any $n \in \N$,
	\begin{equation*}
	\inf_{\widetilde \lambda} \sup_{\lambda \in \Lambda_\gamma^r} \E [ \Vert \widetilde \lambda - \lambda \Vert^2] \gtrsim \Psin
	\end{equation*}
	where the infimum is taken over all estimators of $\widetilde \lambda$ of $\lambda$ based on the sample $N_1,\ldots,N_n$ and the constant hidden in $\gtrsim$ depends only on $\eta$, $L$ and $\Gamma$.
\end{thm}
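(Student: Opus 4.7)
The plan is to prove Theorem~\ref{thm:lower} by Assouad's lemma applied to a $2\knast$-dimensional hypercube of trigonometric perturbations around a constant baseline intensity. This is the natural reduction because $\Lambda$ is an ellipsoid in the Fourier basis; conditions \ref{it:C1} and (C2) will enter at two distinct places, (C2) to calibrate the perturbation amplitude to $\Psin$, and \ref{it:C1} to guarantee simultaneous feasibility of the ellipsoid and the non-negativity constraint.

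For $\theta=(\theta_j)_{0<|j|\leq \knast}\in\{-1,+1\}^{2\knast}$ I define
\begin{equation*}
\lambda_\theta \;=\; c_0 \;+\; \delta\sum_{0<|j|\leq \knast}\theta_j\,\phi_j,
\end{equation*}
with constants $c_0,\delta>0$ to be fixed. By Parseval and the monotonicity of $(\gamma_n)$ from Assumption~\ref{ass:seq},
\begin{equation*}
\Vert\lambda_\theta\Vert_\gamma^{2}\;\leq\;c_0^{2}+2\delta^{2}\knast\gamma_\knast^{2},
\end{equation*}
and condition (C2) gives $\gamma_\knast^{-2}\asymp(2\knast+1)/n$, so choosing $\delta^{2}=c/n$ with $c=c(L)$ small enough enforces $\Vert\lambda_\theta\Vert_\gamma\leq L$. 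For non-negativity, $\Vert\phi_j\Vert_\infty\leq\sqrt{2}$ yields $\Vert\lambda_\theta-c_0\Vert_\infty\leq 2\sqrt{2}\,\knast\delta$, while the monotonicity of $\gamma$ combined with condition \ref{it:C1} produces
\begin{equation*}
\knast\gamma_\knast^{-2}\;\leq\;\sum_{j=1}^{\knast}\gamma_j^{-2}\;\leq\;\Gamma,
\end{equation*}
which together with (C2) gives $\knast^{2}\lesssim n\Gamma$, hence $\knast\delta\lesssim\sqrt{\Gamma}$. Fixing $c_0=c_0(L,\Gamma)$ large enough then forces $\lambda_\theta\geq c_0/2>0$ and keeps every hypothesis inside $\Lambda$.

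To compare neighbouring laws, I use the Poisson identity
\begin{equation*}
\mathrm{KL}\bigl(\P_{\lambda_\theta}^{\otimes n}\,\|\,\P_{\lambda_{\theta'}}^{\otimes n}\bigr)\;=\;n\int_0^1\bigl(\lambda_{\theta'}-\lambda_\theta-\lambda_\theta\log(\lambda_{\theta'}/\lambda_\theta)\bigr)\,\dd t,
\end{equation*}
which, since $\lambda_\theta,\lambda_{\theta'}\geq c_0/2$, is bounded up to a universal constant by $n\Vert\lambda_\theta-\lambda_{\theta'}\Vert^{2}/c_0$. For $\theta,\theta'$ at Hamming distance one, Parseval gives $\Vert\lambda_\theta-\lambda_{\theta'}\Vert^{2}=4\delta^{2}$, so $\mathrm{KL}\lesssim c/c_0$ can be made as small as desired by shrinking $c$. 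Applying Assouad's lemma (see e.g.~\cite{massart2007concentration}) to the pair $(\lambda_\theta,\rho)$, which satisfies $\Vert\lambda_\theta-\lambda_{\theta'}\Vert^{2}=4\delta^{2}\rho(\theta,\theta')$, then yields
\begin{equation*}
\inf_{\widetilde\lambda}\max_{\theta}\E_{\lambda_\theta}\Vert\widetilde\lambda-\lambda_\theta\Vert^{2}\;\gtrsim\;2\knast\delta^{2}\;\asymp\;\knast/n,
\end{equation*}
and condition (C2) upgrades this to the desired $\gtrsim\Psin$, with constants depending only on $L$, $\Gamma$, and $\eta$.

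The genuinely delicate point is the joint compatibility of the ellipsoid and the non-negativity constraints: the former forces $\delta=O(n^{-1/2})$, the latter $c_0\gtrsim\knast\delta$, and together they require $\knast=O(\sqrt n)$. Condition \ref{it:C1} is introduced precisely to secure this via $\knast^{2}\lesssim n\Gamma$, which is where this hypothesis enters the argument in an essential way; the remaining ingredients (explicit Poisson KL identity, Parseval on neighbours, the standard Assouad packing argument) are routine.
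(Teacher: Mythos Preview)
Your proof is correct and follows the same overall architecture as the paper's: an Assouad-type hypercube of trigonometric perturbations around a constant baseline, with (C1) used to secure non-negativity and (C2) to calibrate the amplitude. Two implementation differences are worth noting. First, the paper controls the distance between neighbouring hypotheses via the Hellinger affinity, invoking Reiss's bound $H^2(\P_\theta^{N_i},\P_{\theta^{(j)}}^{N_i})=\int(\sqrt{\lambda_\theta}-\sqrt{\lambda_{\theta^{(j)}}})^2$ for PPPs and then carrying out the Assouad reduction by hand; you use the Kullback--Leibler identity and cite Assouad's lemma directly. Both are standard and equivalent here. Second, for the sup-norm of the perturbation the paper applies Cauchy--Schwarz with the $\gamma$-weights, bounding $\sum_{|j|\leq\knast}1\leq\Gamma^{1/2}(\sum_{|j|\leq\knast}\gamma_j^2)^{1/2}$ and then using (C2) on the second factor; you instead first extract the intermediate scaling $\knast\lesssim\sqrt{n\eta\Gamma}$ from $\knast\gamma_{\knast}^{-2}\leq\Gamma$ combined with (C2), and bound the sup-norm by $2\sqrt{2}\knast\delta$. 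Your route is arguably more transparent about why (C1) is needed, at the cost of a slightly looser constant. One presentational quibble: your phrase ``fixing $c_0$ large enough'' is misleading, since the ellipsoid constraint forces $c_0<L$; the correct order is to fix $c_0$ (say $c_0=L/2$) and then take $c$ small depending on $L,\Gamma,\eta$, which is compatible with your later ``shrinking $c$'' step.
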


\begin{rem}
	The mild assumption~\ref{it:C1} on the convergence of the series $\sum_{j \in \Z} \gamma_j^{-2}$ is needed only in order to guarantee the non-negativity of the candidate intensities considered in the proof.
	On the whole, the proof is very much in line with the proof of Theorem~2.1 in~\cite{johannes2013adaptive} expanded with the essential ingredient that the Hellinger distance between two PPPs is bounded by the Hellinger distance of the corresponding intensity measures (see Theorem~3.2.1 in~\cite{reiss1993course}).
\end{rem}

\begin{rem}
	Note that the lower bound proof given in~\cite{reynaud2003adaptive} is based on a specific property called \emph{localization} and is not valid for ellipsoids expressed in terms of the trigonometric basis.
\end{rem}

\begin{ex}[Sobolev ellipsoids]\label{ex:rates}
	Let $\gamma_0=1$, $\gamma_j = \vert j \vert^{p}$ for $j \neq 0$.
	This setting corresponds to $\lambda$ belonging to a \emph{Sobolev ellipsoid}.
	Then, Assumption~\ref{ass:seq} is satisfied and elementary computations show that $\knast \asymp n^{1/(2p+1)}$ as well as $\Psin \asymp n^{-2p/(2p+1)}$.
	Furthermore, the additional conditions of Theorem~\ref{thm:lower} are satisfied if $p>1$ holds.
\end{ex}

\begin{ex}[Analytic functions]\label{ex:rates:analytic}
	Let $\gamma_j = \exp(\rho \vert j \vert)$ for $j \in \Z$ for some $\rho > 0$. This setting corresponds to $\lambda$ belonging to a class of \emph{analytic functions}.
	Assumption~\ref{ass:seq} is also fulfilled in this case and we obtain $\knast \asymp \log n$ and $\Psin \asymp \log n/n$.
	The additional assumption of Theorem~\ref{thm:lower} does not impose any additional restriction on $\rho$.
\end{ex}

\begin{ex}[Generalized analytic functions]\label{ex:rates:generalized}
	Let $\gamma_j = \exp(2\rho \vert j \vert^p)$ for $\beta, p > 0$.
	Note that in this case the Fourier coefficients of $\lambda$ obey a power exponential decay and $\lambda$ belongs to a class of \emph{generalized analytic functions}.
	Assumption~\ref{ass:seq} is satisfied in this case and there are no additional restrictions on $p$ (and $\rho$) due to Theorem~\ref{thm:lower}.
	We have $\knast \asymp (\log n)^{1/p}$ resulting in the rate $\Psin \asymp (\log n)^{1/p}/n$.
\end{ex}

\subsection{Adaptive estimation}\label{subs:adaptive}

The optimal choice of the dimension parameter $k$ stated in Proposition~\ref{prop:upper} depends on the smoothness characteristics of the intensity via the sequence $\gamma$.
However, such an \emph{a priori} knowledge is a strong assumption and usually not available in practise.
Thus, there is a demand for a data-driven choice of the dimension parameter which hopefully does not deteriorate the quality of the upper risk bound or at least leads to worse numerical constants, merely.

This data-driven choice of the dimension parameter and the resulting upper risk bound are investigated now.
For this purpose, we follow a model selection approach which has been successfully applied to a wide range of estimation problems in nonparametric statistics (cf., for instance,~\cite{barron1999risk, comte2015estimation} for general accounts to this model selection paradigm).

For $s,t \in \L^2$, introduce the notation $\langle s,t \rangle = \int_0^1 s(x) t(x) \dd x$ and consider the contrast function
\begin{equation*}
	\Upsilon_n(t) = \Vert t \Vert^2 - 2 \langle \widehat \lambda_n, t  \rangle, \qquad t \in \L^2.
\end{equation*}
Define the random sequence of penalties $(\pen_k)_{k \in \N}$ via $$\pen_k = 24 \cdot (\beta_0 \vee 1)\cdot \frac{2k+1}{n}.$$
Building on the definitions made until now, we define the data-driven selection $\knhat$ of the dimension parameter as the minimizer of the penalized contrast
\begin{equation*}\label{eq:def:k}
	\knhat \defeq \argmin_{0 \leq k \leq n} \{ \Upsilon_n(\widehat \lambda_k) + \pen_k \}.
\end{equation*}

The following theorem provides an upper bound for the risk of the estimator $\widehat \lambda_{\knhat}$.
Its proof is given in Appendix~\ref{s:app:adap}.

\begin{thm}\label{thm:adap:est}
	Let Assumption~\ref{ass:seq} hold.
	Then, for any $n \in \N$, we have
	\begin{equation*}
		\sup_{\lambda \in \Lambda_\gamma^r} \E [\Vert \widehat \lambda_\knhat - \lambda \Vert^2] \lesssim \min_{0 \leq k \leq n} \max \left\lbrace \gamma_k^{-2}, \frac{2k+1}{n} \right\rbrace + \frac{1}{n} + \exp(-\kappa \sqrt n)
	\end{equation*}
	where $\kappa > 0$ is a numerical constant and the constant hidden in $\lesssim$ depends only on $L$.
\end{thm}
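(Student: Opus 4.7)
The plan is to carry out a Barron--Birgé--Massart-type model selection argument, using as the central analytic tool the integrated concentration inequality Proposition~\ref{prop:conc} from the appendix.

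First I would reformulate the contrast. Setting
\begin{equation*}
\nu_n(t) = \frac{1}{n}\sum_{i=1}^n \int_0^1 t(x)(\dd N_i(x) - \dd\Lambda(x)), \qquad t \in \L^2,
\end{equation*}
a direct computation yields the identity $\Upsilon_n(t) + \Vert\lambda\Vert^2 = \Vert t - \lambda\Vert^2 - 2\nu_n(t)$ for every $t \in \L^2$. Combined with the defining inequality $\Upsilon_n(\widehat\lambda_{\knhat}) + \pen_{\knhat} \leq \Upsilon_n(\widehat\lambda_k) + \pen_k$ (valid for any deterministic $k \in \{0,\ldots,n\}$), this gives
\begin{equation*}
\Vert \widehat\lambda_{\knhat} - \lambda\Vert^2 \leq \Vert\widehat\lambda_k - \lambda\Vert^2 + 2\nu_n(\widehat\lambda_{\knhat} - \widehat\lambda_k) + \pen_k - \pen_{\knhat}.
\end{equation*}
Since $\widehat\lambda_{\knhat} - \widehat\lambda_k$ belongs to the trigonometric subspace $S_{k\vee\knhat} := \operatorname{span}\{\phi_j : \vert j\vert \leq k \vee \knhat\}$, a Cauchy--Schwarz step in Fourier coefficients followed by $2ab \leq \tfrac14 a^2 + 4 b^2$ yields $2\nu_n(\widehat\lambda_{\knhat} - \widehat\lambda_k) \leq \tfrac14\Vert\widehat\lambda_{\knhat} - \widehat\lambda_k\Vert^2 + 4\chi_n^2(k \vee \knhat)$ with $\chi_n(K) := \sup\{\nu_n(t) : t \in S_K,\ \Vert t\Vert \leq 1\}$. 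Absorbing the quadratic piece via $\Vert\widehat\lambda_{\knhat} - \widehat\lambda_k\Vert^2 \leq 2\Vert\widehat\lambda_{\knhat}-\lambda\Vert^2 + 2\Vert\widehat\lambda_k - \lambda\Vert^2$ and rearranging produces
\begin{equation*}
\tfrac12\Vert\widehat\lambda_{\knhat} - \lambda\Vert^2 \leq \tfrac32\Vert\widehat\lambda_k - \lambda\Vert^2 + 4\chi_n^2(k\vee\knhat) + \pen_k - \pen_{\knhat}.
\end{equation*}

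Next I would introduce a deterministic majorant $p(K) = c\,(\beta_0 \vee 1)(2K+1)/n$, calibrated so that $8\,p(K) \leq \pen_K$, and decompose $4\chi_n^2(k\vee\knhat) \leq 4 p(k\vee\knhat) + 4[\chi_n^2(k\vee\knhat) - p(k\vee\knhat)]_+$. Using $p(k\vee\knhat) \leq p(k) + p(\knhat)$, the $p(\knhat)$ part is absorbed into $\pen_{\knhat}$ on the right, the $p(k)$ part has the same order as $\pen_k$, and one is left with $\E[\chi_n^2(k\vee\knhat) - p(k\vee\knhat)]_+ \leq \sum_{K=0}^n \E[\chi_n^2(K) - p(K)]_+$. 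Here Proposition~\ref{prop:conc} applies — after a density argument to reduce to a countable subclass of $\{t \in S_K : \Vert t\Vert \leq 1\}$ — and shows that the summand decays fast enough in $K$ to make the total of order $1/n$. Taking expectations, using $\E\Vert\widehat\lambda_k - \lambda\Vert^2 \leq L^2\gamma_k^{-2} + L(2k+1)/n$ from the proof of Proposition~\ref{prop:upper}, and minimizing over $k \in \{0,\ldots,n\}$, yields the leading term $\min_{k}\max\{\gamma_k^{-2},(2k+1)/n\}$ of the announced bound.

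The main obstacle I foresee is that the penalty is in fact data-driven: the quantity $\beta_0$ in $\pen_k$ is unknown and must be replaced by $\widehat\beta_0 = \frac{1}{n}\sum_{i=1}^n N_i([0,1])$, which is unbiased but random, making the preceding majoration comparison genuinely stochastic. To handle this I would split the analysis on the event $A = \{\tfrac12(\beta_0 \vee 1) \leq \widehat\beta_0 \vee 1 \leq 2(\beta_0 \vee 1)\}$. On $A$ the empirical penalty is equivalent to the deterministic one up to universal constants and the preceding argument applies verbatim with slightly enlarged constants; on $A^c$ a crude worst-case bound for $\E[\Vert\widehat\lambda_{\knhat}-\lambda\Vert^2 \mathbf{1}_{A^c}]$ is used, while Theorem~\ref{thm:right} applied to the singleton class $\{\phi_0\}$ produces $\P(A^c) \leq \exp(-\kappa\sqrt n)$ for some $\kappa > 0$, which accounts precisely for the final $\exp(-\kappa\sqrt n)$ term in the statement. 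The additional $1/n$ term is the residual from the concentration step described in the preceding paragraph.
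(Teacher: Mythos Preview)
Your overall structure matches the paper's proof closely: the same contrast identity, the same model-selection chain, the same good/bad-event split on exactly the event $A=\Xi$, and the same appeal to Proposition~\ref{prop:conc} (via Lemma~\ref{l:ex:conc}) to control the supremum term. So the approach is correct and essentially the paper's.

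There is, however, a misattribution of which error source produces which residual term. You claim that $\sum_{K=0}^n \E[\chi_n^2(K)-p(K)]_+$ is $O(1/n)$ and that the $\exp(-\kappa\sqrt n)$ in the statement comes from $\P(A^c)$. In the paper it is the other way around. Proposition~\ref{prop:conc} has \emph{two} terms: with $M_1=\sqrt{2K+1}$, $H^2=(\beta_0\vee1)(2K+1)/n$ and $\upsilon=\sqrt{2K+1}\,\Vert\lambda\Vert(\beta_0\vee1)$, the first term has exponent $nH^2/\upsilon\asymp\sqrt{2K+1}$ and sums to $O(1/n)$, but the second term has exponent $nH/M_1=\sqrt{n(\beta_0\vee1)}$, so it equals $(2K+1)n^{-2}\exp(-K_3\sqrt n)$ and sums over $K\le n$ to a quantity of order $\exp(-K_3\sqrt n)$, \emph{not} $1/n$. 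This is precisely where the $\exp(-\kappa\sqrt n)$ in the theorem comes from. Conversely, $\P(A^c)$ is controlled by a Poisson Chernoff bound (or, if you prefer, Theorem~\ref{thm:right}\ref{thm:right:c} applied to $\phi_0$, which in fact yields $\exp(-cn)$ rather than $\exp(-\kappa\sqrt n)$); combined with a Cauchy--Schwarz plus fourth-moment argument for $\E[\Vert\widehat\lambda_{\knhat}-\lambda\Vert^2\mathbf 1_{A^c}]$ this contributes only $O(1/n)$. Your ``crude worst-case bound'' on $A^c$ needs exactly this Cauchy--Schwarz/moment step, since $\Vert\widehat\lambda_{\knhat}-\lambda\Vert^2$ is not uniformly bounded. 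None of this breaks your argument, but as written your sketch overlooks the $M_1$-term of Proposition~\ref{prop:conc} and misidentifies the origin of the $\exp(-\kappa\sqrt n)$ residual.
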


\begin{rem}
	The penalty term used in the definition of $\knhat$ is random which is in contrast to penalty terms occuring, for instance, in density estimation or deconvolution problems.
	The need for randomization is due to the quantity $\beta_0$ in the definition of $H$ in Lemma~\ref{l:ex:conc}.
	If $L$ (but not $\gamma$) was known, one could proceed without randomization by choosing the penalty proportional to $\sqrt L (2k+1)/n$.
	However, the factor $L$ in this definition cannot be replaced by an estimate of $L$ because a reasonable estimator of $L$ is not reachable from the data.
	Note that the penalty terms considered in~\cite{reynaud2003adaptive} in a point process framework similar to ours are also non-deterministic.
\end{rem}

The adaptive estimator $\widehat \lambda_{\knhat}$ attains the rate $\Psin$ if and only if
\begin{equation*}
	\Psin \asymp \min_{0 \leq k \leq n} \max \bigg\{ \gamma_k^{-2}, \frac{2k+1}{n} \bigg\}.
\end{equation*}
Since under Assumption~\ref{ass:seq} it holds that $\knast \lesssim n$, we immediately obtain the following result.

\begin{cor}\label{cor}
	Under Assumption~\ref{ass:seq}, the estimator $\widehat \lambda_\knhat$ is rate optimal over the class $\Lambda$.
\end{cor}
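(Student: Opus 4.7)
The plan is to read off the corollary directly from Theorem~\ref{thm:adap:est} by bounding the right-hand side of its upper risk bound by a constant multiple of $\Psin$; combining this with Theorem~\ref{thm:lower} then yields rate optimality in the sense of Subsection~\ref{subs:framework}. Concretely, Theorem~\ref{thm:adap:est} furnishes
\begin{equation*}
\sup_{\lambda \in \Lambda} \E [\Vert \widehat \lambda_\knhat - \lambda \Vert^2] \lesssim \min_{0 \leq k \leq n} \max \left\{ \gamma_k^{-2}, \frac{2k+1}{n} \right\} + \frac{1}{n} + \exp(-\kappa \sqrt n),
\end{equation*}
so I need to show that (i) the truncated minimum equals $\Psin$ and (ii) the two additive remainders are $O(\Psin)$.

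For (i), I would exploit that by Assumption~\ref{ass:seq} one has $\gamma_0 = 1$; hence evaluating the max at $k = 0$ gives $\max\{1,1/n\} = 1$ for every $n \geq 1$, so that the unrestricted minimum over $k \in \N_0$ is at most $1$. Consequently, the minimizer $\knast$ from Proposition~\ref{prop:upper} must satisfy $(2\knast+1)/n \leq 1$, i.e.\ $\knast \leq (n-1)/2 \leq n$. This means the restriction $k \leq n$ does not truncate $\knast$, and therefore the minimum in Theorem~\ref{thm:adap:est} coincides with $\min_{k \in \N_0} \max\{\gamma_k^{-2}, (2k+1)/n\} = \Psin$.

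For (ii), the chain $\Psin \geq (2\knast+1)/n \geq 1/n$ absorbs the remainder $1/n$ into $\Psin$ at once, while the super-polynomial decay of $\exp(-\kappa \sqrt n)$ combined with the same lower bound gives $\exp(-\kappa \sqrt n) \leq C/n \leq C\,\Psin$ for all $n \in \N$ and a suitable constant $C$. Assembling these pieces yields $\sup_{\lambda \in \Lambda} \E [\Vert \widehat \lambda_\knhat - \lambda \Vert^2] \lesssim \Psin$, and matching with the minimax lower bound of Theorem~\ref{thm:lower} delivers the claim. No serious obstacle is expected; the only point that deserves a moment of care is to verify that the domination of $\exp(-\kappa \sqrt n)$ by $\Psin$ holds uniformly in $n \in \N$ rather than merely asymptotically, which is secured by the crude inequality $\Psin \geq 1/n$.
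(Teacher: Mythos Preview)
Your argument is correct and coincides with the paper's own reasoning: the paper simply remarks that under Assumption~\ref{ass:seq} one has $\knast \lesssim n$, so the truncated minimum in Theorem~\ref{thm:adap:est} equals $\Psin$, and the additive remainders are harmless. Your proof makes this explicit (via $\gamma_0=1$ forcing $(2\knast+1)/n\le 1$ and $\Psin\ge 1/n$), but the route is the same. One small caveat: you close by appealing to Theorem~\ref{thm:lower} for the matching lower bound, but that theorem needs the additional conditions (C1)--(C2), whereas the corollary is stated under Assumption~\ref{ass:seq} alone; in the paper ``rate optimal'' here effectively means ``attains the oracle rate $\Psin$ of Proposition~\ref{prop:upper}'', with the minimax identification supplied separately when (C1)--(C2) hold.
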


In particular, the estimator $\widehat \lambda_\knhat$ is rate optimal in the framework of Examples~\ref{ex:rates}, \ref{ex:rates:analytic}, and~\ref{ex:rates:generalized} where $\knast \asymp n^{1/(2p+1)}$, $\knast \asymp \log n$, and $\knast \asymp (\log n)^{1/p}$, respectively.

%

%

\newpage

\appendix

\section{Proof of Theorem~\ref{thm:right}}\label{s:app:right}

\subsection{Notation and preparatory results}\label{subsubs:notation:right}

In this subsection, we introduce notation and state preliminary results.
The proof of Theorem~\ref{thm:right}, based on these results, is given in Subsection~\ref{subs:proof}. 
The key property used to prove Theorem~\ref{thm:right} is the \emph{infinite divisibility} of the PPP
$N$:
for any $\ell \in \N$, there exist i.i.d.\,PPPs $N_{j}$ such that
\begin{equation}\label{eq:inf:div}
N \stackrel{d}{=} \sum_{j=1}^{\ell} N_{j}.
\end{equation}
The common intensity measure of the $N_{j}$ in this representation is $\Lambdatilde = \Lambda/\ell$.
Throughout this work, the dependence of $N_{j}$, $\Lambdatilde$, and derived quantities on $\ell$ is often suppressed for the sake of convenience.
Define $\Lambdab = \Lambda(\X)$ and $\Delta = \Lambdab/\ell$.
For $s \in \Sc$, consider the centred random variables
\begin{equation*}
	I^{j}(s) = \int_\X s(x) (\dd N_{j}(x)-\dd \Lambdatilde(x)).
\end{equation*}
We define the random variable $X_{j} = N_{j}(\X)$ (that is, $X_{j}$ is the total number of points of the point process $N_{j}$) and the event $\Omega_{j} = \{ X_{j} \leq 1 \}$.

\begin{lem}\label{lem:prob}
	$\P (\Omega_{j}^\complement) \leq \Delta^2/2$.
\end{lem}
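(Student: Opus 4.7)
Since $N_j$ is a Poisson point process with intensity measure $\Lambdatilde$ and total mass $\Lambdatilde(\X) = \Delta$, the total count $X_j = N_j(\X)$ has a Poisson distribution with parameter $\Delta$. The event $\Omega_j^\complement = \{X_j \geq 2\}$ therefore has probability
\begin{equation*}
	\P(\Omega_j^\complement) = 1 - e^{-\Delta} - \Delta e^{-\Delta},
\end{equation*}
and the task reduces to a one-variable inequality in $\Delta \geq 0$.

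The cleanest route I would take is via the second factorial moment combined with Markov's inequality. Because $X_j$ is integer-valued, the events $\{X_j \geq 2\}$ and $\{X_j(X_j-1) \geq 2\}$ coincide. A direct computation (or a standard identity for factorial moments of the Poisson law) gives
\begin{equation*}
	\E [X_j(X_j-1)] = \var(X_j) + \E[X_j]^2 - \E[X_j] = \Delta + \Delta^2 - \Delta = \Delta^2,
\end{equation*}
so that Markov's inequality applied to the non-negative random variable $X_j(X_j-1)$ yields
\begin{equation*}
	\P(\Omega_j^\complement) = \P(X_j(X_j-1) \geq 2) \leq \frac{\E[X_j(X_j-1)]}{2} = \frac{\Delta^2}{2}.
\end{equation*}

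If one prefers a more hands-on argument, essentially the same inequality follows from analyzing $g(\Delta) = \Delta^2/2 - 1 + e^{-\Delta} + \Delta e^{-\Delta}$: we have $g(0)=0$ and $g'(\Delta) = \Delta(1-e^{-\Delta}) \geq 0$ for $\Delta \geq 0$, so $g(\Delta) \geq 0$ throughout $[0,\infty)$. There is no serious obstacle here: the only subtlety is recognizing that $X_j$ being integer-valued allows one to replace $\{X_j \geq 2\}$ by $\{X_j(X_j-1)\geq 2\}$ in the Markov step, which is what makes the bound sharp enough to recover the factor $1/2$ that will later combine with the number $\ell$ of independent copies in \eqref{eq:inf:div} to make the exceptional set negligible.
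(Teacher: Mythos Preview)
Your proof is correct and matches the paper's argument essentially verbatim: the paper applies Markov's inequality to the non-negative, non-decreasing function $h(n)=n^2-n=n(n-1)$ on $\N_0$, which is exactly your second-factorial-moment route. The additional calculus check you offer is a valid alternative but is not used in the paper.
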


\begin{proof}
	The function $h: \N_0 \to \R, n \mapsto n^2 - n$ is non-negative and non-decreasing. Since $\Omega_{j}^\complement = \{ X_{j} \geq 2 \}$ the claim estimate follows from Markov's inequality. 
\end{proof}
Let us define the $\sigma$-fields
\begin{equation*}
	\Fs = \sigma (N_1,\ldots,N_\ell) \quad \text{and} \quad \Fs^{\vee j} = \sigma\left( \{N_{1},\ldots,N_{\ell} \}\backslash \{ N_{j}  \}\right).
\end{equation*}
Further, let $\E^{\vee j}[\,\cdot\, ] = \E [ \,\cdot\, | \Fs^{\vee j} ]$ and $\P^{\vee j}(A) = \E^{\vee j}[ \1_A ]$.
In addition, for the rest of Appendix~\ref{s:app:right}, we denote $f=f(t) = \exp(tZ)$ and $f_{j}=f_{j}(t) = \E^{\vee j} [f]$.
It will turn out to be sufficient to prove the results of this subsection under the following finiteness assumption.
\setcounter{ass}{5}
\begin{ass}\label{ass:finite}
	$\Sc = \{s_1,\ldots,s_m\}$ is a \emph{finite} set of measurable functions from the Polish space $\X$ to $[-1,1]$, and
	$\tau$ is the first index $i$ such that $Z = I(s_i)$.
\end{ass}

\begin{lem}\label{lem:bounds}
	Let Assumption~\ref{ass:finite} hold.
	Then, for any non-negative $t$,
	\begin{enumerate}[label=\alph*),itemindent=-1em]
		\item\label{it:lem:bounds:a} $f/f_{j} \leq \exp(t I^{j}(s_\tau))$, and
		\item\label{it:lem:bounds:b} $\exp(-2(1 + \Delta)t) (1-\Delta/\sqrt 2\cdot e^{(2+3\Delta)t} \exp(\Delta (e^{2t}-1)/2)) \leq f/f_{j}$ on $\Omega_{j}$.
	\end{enumerate}
\end{lem}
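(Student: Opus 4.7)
My plan is to work throughout with the auxiliary random variable $Z^{\vee j} \defeq \sup_{s \in \Sc} \sum_{k \neq j} I^k(s)$, which is $\Fs^{\vee j}$-measurable; since $\Sc$ is finite, the supremum is attained at a random but $\Fs^{\vee j}$-measurable index, call it $\tau_{\vee j}$. Exploiting the decomposition $I(s) = \sum_{k=1}^\ell I^k(s)$ (which follows from $N = \sum_{k=1}^\ell N_k$ and $\Lambda = \ell\Lambdatilde$), one obtains the two-sided sandwich
\begin{equation*}
  Z^{\vee j} + I^j(s_{\tau_{\vee j}}) \;\leq\; Z \;\leq\; Z^{\vee j} + I^j(s_\tau),
\end{equation*}
where the lower bound comes from plugging the $\Fs^{\vee j}$-optimal index into $Z$ and the upper bound from $\sum_{k \neq j} I^k(s_\tau) \leq Z^{\vee j}$.

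For part~\ref{it:lem:bounds:a}, I would apply Jensen's inequality to get $f_j \geq \exp(t\E^{\vee j}[Z])$, combine it with the lower sandwich and the centering identity $\E^{\vee j}[I^j(s_{\tau_{\vee j}})] = 0$ (which holds since $s_{\tau_{\vee j}}$ is $\Fs^{\vee j}$-measurable, $N_j$ is independent of $\Fs^{\vee j}$, and Campbell's formula gives $\E I^j(s) = 0$), and conclude $f_j \geq \exp(t Z^{\vee j})$. Exponentiating the upper sandwich and dividing then yields $f \leq \exp(tZ^{\vee j})\exp(tI^j(s_\tau)) \leq f_j\exp(tI^j(s_\tau))$, as claimed.

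For part~\ref{it:lem:bounds:b}, the core step is an upper bound on $f_j$ obtained by splitting the conditional expectation along $\Omega_j$ and $\Omega_j^\complement$. The bounds $|s| \leq 1$ and $\Lambdatilde(\X) = \Delta$ give $|I^j(s)| \leq 1+\Delta$ on $\Omega_j$ and $|I^j(s)| \leq X_j + \Delta$ in general; together with the upper sandwich this produces $Z \leq Z^{\vee j} + 1 + \Delta$ on $\Omega_j$ and $Z \leq Z^{\vee j} + X_j + \Delta$ on $\Omega_j^\complement$. The first contribution to $f_j$ is therefore bounded by $\exp(tZ^{\vee j})\,e^{t(1+\Delta)}$; for the second, I would apply Cauchy--Schwarz together with Lemma~\ref{lem:prob} and the Poisson moment-generating function $\E[\exp(2tX_j)] = \exp(\Delta(e^{2t}-1))$ to obtain $\E[\exp(tX_j)\1_{\Omega_j^\complement}] \leq (\Delta/\sqrt{2})\exp(\Delta(e^{2t}-1)/2)$. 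Summing,
\begin{equation*}
  f_j \;\leq\; \exp(tZ^{\vee j}) \Bigl[\, e^{t(1+\Delta)} + \tfrac{\Delta}{\sqrt{2}}\, e^{t\Delta}\, \exp(\Delta(e^{2t}-1)/2) \,\Bigr].
\end{equation*}

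On $\Omega_j$, the lower sandwich combined with $|I^j(s_{\tau_{\vee j}})| \leq 1+\Delta$ yields $f \geq \exp(tZ^{\vee j})\,e^{-t(1+\Delta)}$. Taking the ratio, applying the elementary inequality $1/(1+a) \geq 1-a$, and using the crude comparison $e^{-t} \leq e^{(2+3\Delta)t}$ (valid for $t,\Delta \geq 0$) to match the exact form of the statement, yields part~\ref{it:lem:bounds:b}. I expect the main obstacle to be the treatment of $\Omega_j^\complement$: the uniform boundedness that worked on $\Omega_j$ is no longer available there, and one must replace it by a Poisson-MGF estimate on the rare event $\{X_j \geq 2\}$; this Cauchy--Schwarz step is precisely the source of the factor $\Delta/\sqrt{2}$ that appears in the statement.
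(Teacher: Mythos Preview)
Your proposal is correct and follows essentially the same route as the paper: the auxiliary quantity $Z^{\vee j}$ (the paper's $Z_j$), the two-sided sandwich, Jensen's inequality for part~\ref{it:lem:bounds:a}, and the split over $\Omega_j$/$\Omega_j^\complement$ with Cauchy--Schwarz and the Poisson moment-generating function for part~\ref{it:lem:bounds:b} are all exactly what the paper does.

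The only noteworthy deviation is in the final algebra for part~\ref{it:lem:bounds:b}. The paper substitutes $\exp(tZ_j) \leq f\,e^{t(1+\Delta)}$ on $\Omega_j$ into its upper bound for $f_j$, rearranges to an inequality with $f/f_j$ on both sides, and then invokes part~\ref{it:lem:bounds:a} once more (via $f/f_j \leq e^{t(1+\Delta)}$ on $\Omega_j$) to eliminate the left-hand $f/f_j$; this double substitution is what produces the exponent $(2+3\Delta)t = (1+2\Delta)t + (1+\Delta)t$ naturally. You instead cancel $\exp(tZ^{\vee j})$ directly in the ratio $f/f_j$, obtain the sharper factor $e^{-t}$ in place of $e^{(2+3\Delta)t}$, and then weaken via $e^{-t} \leq e^{(2+3\Delta)t}$ to match the stated form. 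Your route is a touch cleaner and avoids the second appeal to part~\ref{it:lem:bounds:a}, but the two arguments are otherwise identical in spirit.
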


\begin{proof}[Proof of Lemma~\ref{lem:bounds}]
	In order to prove statement~\ref{it:lem:bounds:a}, set $I^{\vee j}(s) = I(s) - I^{j}(s)$ and $Z_{j} = \sup_{s \in \Sc} I^{\vee j}(s)$.
	Moreover, define $\tau_{j}$ as the first index $i$ such that $I^{\vee j}(s_i) = Z_{j}$.
	Then, $Z_{j}$ is $\Fs^{\vee j}$-measurable, and we have
	\begin{equation}\label{eq:lem:bounds}
	\exp(t(Z_{j} + X_{j} + \Delta)) \geq f \geq \exp(t Z_{j}) \cdot \exp(t I^{j}(s_{\tau_{j}})).
	\end{equation}
	The random variable $\tau_{j}$ is $\Fs^{\vee j}$-measurable which implies $\E^{\vee j} [ I^{j}(s_{\tau_{j}}) ] = 0$.
	Thus, by Jensen's inequality, we obtain from the second estimate in~\eqref{eq:lem:bounds} that
	\begin{equation*}
		f_{j} \geq \exp(t Z_{j}) \cdot \E^{\vee j}[ \exp(t I^{j}(s_{\tau_{j}})) ] \geq \exp(t Z_{j}) \geq \exp(t I^{\vee j}(s_\tau)),
	\end{equation*}
	and consequently $f_{j} \geq f \cdot \exp(-t I^{j}(s_\tau))$ which implies statement~\ref{it:lem:bounds:a}.
	
	For the proof of~\ref{it:lem:bounds:b}, we retain the notation introduced in the proof of statement~\ref{it:lem:bounds:a}.
	From the left-hand side inequality in~\eqref{eq:lem:bounds}, we obtain
	\begin{align*}
		f_{j} &\leq e^{t(Z_{j} + \Delta)} \cdot \E [e^{tX_{j}} \1_{\Omega_{j}} ] + e^{t(Z_{j} + \Delta)} \cdot \E [e^{tX_{j}} \1_{\Omega_{j}^\complement}] \\
		&\leq e^{t(Z_{j}+1+\Delta)} + e^{t(Z_{j} + \Delta)} \cdot \E [e^{2tX_{j}}]^{1/2} \P(\Omega_{j}^\complement)^{1/2}.
	\end{align*}
	Multiplication with $\1_{\Omega_{j}}$ on both sides, using the estimate $\P(\Omega_{j}^\complement)^{1/2} \leq \Delta/ \sqrt 2$ from Lemma~\ref{lem:prob}, and recalling the formula for the moment-generating function of a Poisson distributed random variable, yields
	\begin{align*}
		f_{j} \1_{\Omega_{j}} \leq e^{t(Z_{j} + 1 + \Delta)} \1_{\Omega_{j}} + e^{t(Z_{j} + \Delta)} \cdot \exp(\Delta (e^{2t}-1)/2) \cdot \Delta /\sqrt 2 \cdot \1_{\Omega_{j}},
	\end{align*}
	from which we conclude by exploiting the right-hand side inequality of~\eqref{eq:lem:bounds} and the definition of $\Omega_{j}$ that
	\begin{equation*}
		f_{j} \1_{\Omega_{j}} \leq f e^{2(1+\Delta)t} \1_{\Omega_{j}} + f e^{(1+2\Delta)t} \cdot \exp(\Delta (e^{2t}-1)/2) \cdot \Delta/\sqrt 2\cdot \1_{\Omega_{j}},
	\end{equation*}
	and hence by elementary transformations
	\begin{equation*}
		(1- f/f_{j}\cdot  e^{(1+2\Delta)t} \exp(\Delta (e^{2t}-1)/2) \cdot \Delta/\sqrt 2) \cdot \1_{\Omega_{j}} \leq f/f_{j}  \cdot e^{2(1+\Delta)t} \1_{\Omega_{j}}.
	\end{equation*}
	Now, by the statement of assertion~\ref{it:lem:bounds:a} and the definition of $\Omega_{j}$
	\begin{equation*}
		(1- e^{(2+3\Delta)t} \exp(\Delta (e^{2t}-1)/2) \cdot \Delta /\sqrt 2) \cdot \1_{\Omega_{j}} \leq f/f_{j} \cdot e^{2(1+\Delta)t} \cdot \1_{\Omega_{j}},
	\end{equation*}
	which yields the claim assertion after division by $e^{2(1+\Delta)t}$. 
\end{proof}

In the sequel, we put $c(t,\ell) = 1- e^{(2+3\Delta)t} \exp(\Delta (e^{2t}-1)/2) \cdot \Delta/\sqrt 2$.
Note that $c(t,\ell) \leq 1$ and, for any fixed non-negative $t$, $c(t, \ell) \to 1$ as $\ell \to \infty$.
In particular, $c(t, \ell) \in [1/2,1]$, for sufficiently large $\ell$, say $\ell\geq \ell_0 = \ell_0(t)$.
Under the validity of Assumption~\ref{ass:finite}, we consider for any $j \in \{ 1 ,\ldots,\ell\}$ the strictly positive and $\Fs^{\vee j}$-measurable random variables $h_{j}$ defined by
\begin{equation}\label{eq:def:h}
h_{j} = \sum_{i=1}^m \P^{\vee j} (\tau = i) \exp(tI^{\circ j}(s_i)) = \E^{\vee j}[ \exp ( tI^{\circ j}(s_\tau) ) ].
\underline{}\end{equation}
From now on, we denote by $C$ a numerical constant \emph{independent of $\ell$} (but certainly depending on the fixed value of $t$ considered) whose value may change depending on the context.
The following Lemma~\ref{lem:aux:bounds} provides some estimates which are used for the rest of this section.

\begin{lem}\label{lem:aux:bounds}
	Let Assumption~\ref{ass:finite} hold and let
	\begin{equation*}
		\eta(x)=1-\exp(-x)-e^{2(1+\Delta)t - \log c(t,\ell)}x
	\end{equation*}
	for $\ell \geq \ell_0$. Then, the estimate $\E [\bullet] \leq C$ holds true, where $\bullet$ can be replaced by any of the following random variables: \begin{enumerate}[label=\alph*), itemsep=0pt]
		\item\label{it:lem:aux:bounds:a} $h_{j}^4$,
		\item\label{it:lem:aux:bounds:b} $(f_{j}-f)^4$,
		\item\label{it:lem:aux:bounds:c} $(f \log(f/f_{j}))^4$,
		\item\label{it:lem:aux:bounds:d} $(f\eta(tI^{j}(s_\tau)))^4$,
		\item\label{it:lem:aux:bounds:e} $(I^{j}(s))^4$,
		\item\label{it:lem:aux:bounds:f} $(I^{j}(s))^2$,
		\item\label{it:lem:aux:bounds:g} $\exp(t I^{\vee j} (s_\tau))$, and 
		\item\label{it:lem:aux:bounds:h} $\exp(4tI^{\vee j}(s_\tau))$.
	\end{enumerate}
	Here, $I^{\vee j}$ is defined as in the proof of Lemma~\ref{lem:bounds}.
	The constant $C$ can be chosen independent of $j$, and in statements~\ref{it:lem:aux:bounds:d}--\ref{it:lem:aux:bounds:h}, it can, in addition, be chosen independent of $s$ and $s_\tau$, respectively.
\end{lem}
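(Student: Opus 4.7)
The plan is to reduce every item to moments of Poisson random variables. By the infinite divisibility representation~\eqref{eq:inf:div}, each $X_j = N_j(\X)$ is $\mathrm{Poi}(\Delta)$ with $\Delta = \bar\Lambda/\ell$, and $\sum_{k=1}^\ell X_k \sim \mathrm{Poi}(\bar\Lambda)$. Consequently the polynomial moments $\E[X_j^k]$ and the exponential moments $\E\bigl[\exp(\alpha \sum_k X_k)\bigr] = \exp(\bar\Lambda(e^\alpha-1))$ are controlled by constants depending only on $\bar\Lambda$ and $\alpha$, and hence are \emph{uniform in $\ell$ and $j$}. Since $|s|\leq 1$, each centred integral admits the deterministic bounds $|I^{j}(s)|\leq X_j + \Delta \leq X_j + \bar\Lambda$ and $|I^{\vee j}(s)|\leq \sum_{k\neq j} X_k + \bar\Lambda$ uniformly in $s\in\Sc$, and the supremum $Z$ obeys $Z\leq \sum_k X_k + \bar\Lambda$, yielding $f\leq \exp\bigl(t\sum_k X_k + t\bar\Lambda\bigr)$.

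Items \ref{it:lem:aux:bounds:e}--\ref{it:lem:aux:bounds:h} are then immediate: \ref{it:lem:aux:bounds:e} and \ref{it:lem:aux:bounds:f} follow from the finite fourth moment of $X_j+\bar\Lambda$, while \ref{it:lem:aux:bounds:g} and \ref{it:lem:aux:bounds:h} follow by bounding $I^{\vee j}(s_\tau)\leq \sum_{k\neq j}X_k + \bar\Lambda$ and invoking the Poisson MGF. Item \ref{it:lem:aux:bounds:a} reduces to \ref{it:lem:aux:bounds:h}: Jensen applied conditionally to $x\mapsto x^4$ gives
\[
h_j^4 = \bigl(\E^{\vee j}[\exp(tI^{\vee j}(s_\tau))]\bigr)^4 \leq \E^{\vee j}\bigl[\exp(4tI^{\vee j}(s_\tau))\bigr],
\]
and taking expectations reduces \ref{it:lem:aux:bounds:a} to \ref{it:lem:aux:bounds:h}. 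For \ref{it:lem:aux:bounds:b} I use $(f_j-f)^4\leq 8(f_j^4+f^4)$, the Jensen inequality $\E[f_j^4]\leq\E[f^4]$, and the uniform bound $\E[f^4]\leq \exp\bigl(4t\bar\Lambda + \bar\Lambda(e^{4t}-1)\bigr)$ derived from the preceding display for $f$.

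The main work lies in \ref{it:lem:aux:bounds:c} and \ref{it:lem:aux:bounds:d}, where $f$ is multiplied by a factor that requires a two-sided control. For \ref{it:lem:aux:bounds:c}, Lemma~\ref{lem:bounds}\ref{it:lem:bounds:a} provides the upper bound $\log(f/f_j)\leq tI^{j}(s_\tau)\leq t(X_j+\bar\Lambda)$. For the reverse direction, the left-hand estimate of~\eqref{eq:lem:bounds} combined with the Poisson MGF of $X_j$ yields $f_j\leq \exp(t(Z_j+\Delta))\cdot\exp(\Delta(e^t-1))$, and bounding $f$ from below by $\exp(tZ_j)\exp(-t(X_j+\Delta))$ gives $\log(f_j/f)\leq t(X_j+2\Delta) + \Delta(e^t-1)$. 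Hence $|\log(f/f_j)|$ is dominated by an affine function of $X_j$ uniformly in $s$, and Cauchy--Schwarz against $f^8$ together with the moment bounds on $X_j$ concludes. For \ref{it:lem:aux:bounds:d}, the prefactor $e^{2(1+\Delta)t-\log c(t,\ell)}$ is bounded for $\ell\geq \ell_0$ since $c(t,\ell)\in[1/2,1]$; using $|1-e^{-x}|\leq 1+e^{|x|}$ I obtain $|\eta(tI^{j}(s_\tau))|\leq C(1+e^{t(X_j+\bar\Lambda)}+t(X_j+\bar\Lambda))$, and Cauchy--Schwarz against $f^8$ again reduces the problem to exponential and polynomial moments of Poisson variables. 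I expect the main obstacle to be keeping every constant genuinely independent of $\ell$ (and of $s,s_\tau,j$): this hinges precisely on substituting $\Delta\leq\bar\Lambda$ inside the Poisson MGF identity $\E[\exp(\alpha X_j)]=\exp(\Delta(e^\alpha-1))$ before estimating, rather than carrying $\ell$-dependent quantities through the calculation.
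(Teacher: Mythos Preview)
Your proposal is correct and follows essentially the same approach as the paper, which in fact omits the proof entirely apart from the single hint that Lemma~\ref{lem:bounds}\ref{it:lem:bounds:a} is useful for item~\ref{it:lem:aux:bounds:c}; you use precisely this hint for the upper bound on $\log(f/f_j)$ and supply the matching lower bound together with the remaining Poisson moment estimates that the paper leaves implicit. The care you take to replace $\Delta$ by $\bar\Lambda$ before invoking the Poisson MGF is exactly what is needed to keep the constants independent of $\ell$.
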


\begin{proof}
	Let us only mention that for the proof of statement~\ref{it:lem:aux:bounds:c}, it is useful to apply statement~\ref{it:lem:bounds:a} from Lemma~\ref{lem:bounds}.
	Then, all the estimates are easy to derive and we thus omit the proof.
\end{proof}

\begin{lem}\label{lem:h}
	Let Assumption~\ref{ass:finite} hold, and let $h_{j}$ be defined as in~\eqref{eq:def:h}. Then, for all $\ell \geq \ell_0$, we have
	\begin{equation*}
		\sum_{j=1}^{\ell} \E [ (f-h_{j}) \1_{\Omega_{j}} ] \leq e^{2(1+\Delta)t - \log c(t,\ell)} \E \left[ f \right] \log \E [ f ] + C \ell^{-1/2}.
	\end{equation*}
\end{lem}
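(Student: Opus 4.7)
The strategy follows the blueprint of Klein-Rio~\cite{klein2005concentration}, exploiting the infinite divisibility of $N$ together with the restriction to the event $\Omega_{j}$ where $N_{j}$ places at most one point. The starting observation is the decomposition
\[
\E[(f - h_{j})\1_{\Omega_{j}}] = \E[(f - f_{j})\1_{\Omega_{j}}] + \E[(f_{j} - h_{j})\1_{\Omega_{j}}],
\]
which isolates a centred remainder from an entropic main term. Throughout the argument I would abbreviate $A = 2(1+\Delta)t - \log c(t,\ell)$, so that Lemma~\ref{lem:bounds}~\ref{it:lem:bounds:b} takes the convenient pointwise form $f_{j}\1_{\Omega_{j}} \leq e^{A} f \1_{\Omega_{j}}$.

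For the centred remainder, the identity $\E[f - f_{j}] = 0$ gives $\E[(f - f_{j})\1_{\Omega_{j}}] = -\E[(f - f_{j})\1_{\Omega_{j}^{\complement}}]$. I would apply H\"older's inequality with conjugate exponents $4$ and $4/3$, combining the fourth-moment bound $\E[(f_{j} - f)^{4}] \leq C$ of Lemma~\ref{lem:aux:bounds}~\ref{it:lem:aux:bounds:b} with the probability estimate $\P(\Omega_{j}^{\complement}) \leq \Delta^{2}/2$ of Lemma~\ref{lem:prob}, yielding $|\E[(f - f_{j})\1_{\Omega_{j}^{\complement}}]| \leq C\Delta^{3/2}$. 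Summing over $j \in \{1,\ldots,\ell\}$ and recalling $\Delta = \Lambdab/\ell$ produces a total contribution of order $\ell \Delta^{3/2} = \Lambdab^{3/2}\ell^{-1/2}$, which is absorbed into $C\ell^{-1/2}$.

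The main piece $\sum_{j} \E[(f_{j} - h_{j})\1_{\Omega_{j}}]$ is treated by the elementary inequality $a - b \leq a\log(a/b)$ valid for $a,b>0$ (equivalent to $\log(b/a) \leq b/a - 1$), applied to $a = f_{j}$ and $b = h_{j}$ to get $f_{j} - h_{j} \leq f_{j}\log(f_{j}/h_{j})$. Combining with the pointwise bound $f_{j}\1_{\Omega_{j}} \leq e^{A}f\1_{\Omega_{j}}$, and discarding the non-positive contributions on $\{f_{j} < h_{j}\}$, one arrives at a bound of the form $(f_{j} - h_{j})\1_{\Omega_{j}} \leq e^{A}f \log(f_{j}/h_{j})_{+}\1_{\Omega_{j}}$. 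The next step is to replace $\1_{\Omega_{j}}$ by $1$ at the cost of an $O(\ell^{-1/2})$ error, using H\"older's inequality together with the fourth-moment estimates of Lemma~\ref{lem:aux:bounds}~\ref{it:lem:aux:bounds:a} and~\ref{it:lem:aux:bounds:c}; then to exploit the $\Fs^{\vee j}$-measurability of $f_{j}$ and $h_{j}$ to rewrite $\E[f\log(f_{j}/h_{j})] = \E[f_{j}\log(f_{j}/h_{j})]$. The sum over $j$ of these conditional relative-entropy terms is finally telescoped to $\E[f]\log\E[f]$ by a Han-type subadditivity argument tailored to the product decomposition $N \stackrel{d}{=} \sum_{j} N_{j}$.

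The main obstacle lies in this last tensorization step: adapting Han's inequality for independent coordinates to the decomposition of the infinitely divisible PPP while keeping track of the interplay between $\1_{\Omega_{j}}$ and $\P^{\vee j}(\Omega_{j})$, and of the errors produced by restricting to the positive part $\log(f_{j}/h_{j})_{+}$ rather than the full logarithm. Each such error is controlled, uniformly in $j$, by a H\"older argument combining the moment bounds of Lemma~\ref{lem:aux:bounds} with $\P(\Omega_{j}^{\complement}) \leq \Delta^{2}/2$ from Lemma~\ref{lem:prob}; summed over $j$ they contribute only the $C\ell^{-1/2}$ remainder stated in the lemma.
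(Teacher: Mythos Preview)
Your decomposition $(f-h_j)\1_{\Omega_j}=(f-f_j)\1_{\Omega_j}+(f_j-h_j)\1_{\Omega_j}$ and your treatment of the first piece are fine, but the handling of the second piece has a genuine gap. After the step $a-b\leq a\log(a/b)$ you arrive at terms of the form $\E[f_j\log(f_j/h_j)]$, and you then assert that a ``Han-type subadditivity argument'' yields
\[
\sum_{j=1}^{\ell}\E[f_j\log(f_j/h_j)]\ \leq\ \E[f]\log\E[f]\; +\; O(\ell^{-1/2}).
\]
No standard tensorization produces this. Han's inequality (equivalently Proposition~4.1 in~\cite{ledoux1996talagrands}) controls $\sum_j \E[f\log(f/f_j)]$ from below by $\E[f\log f]-\E[f]\log\E[f]$; your quantity involves the ratio $f_j/h_j$, where $h_j=\E^{\vee j}[fe^{-tI^{j}(s_\tau)}]$ is \emph{not} the conditional expectation of $f$, and there is no telescoping structure relating $\sum_j\log(f_j/h_j)$ to $\log f$ or to a single coordinate. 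In particular, the identity $\sum_j I^{j}(s_\tau)=Z=\tfrac{1}{t}\log f$, which is the reason everything collapses in the end, never enters your argument.

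The paper's proof is built precisely to make this identity usable. It does \emph{not} pass through $f_j-h_j$; instead it writes
\[
\E[f-h_j]=\E[f\,\eta(tI^{j}(s_\tau))]+te^{A}\,\E[f\,I^{j}(s_\tau)],\qquad \eta(x)=1-e^{-x}-e^{A}x,
\]
and uses the monotonicity of $\eta$ on $[-A,\infty)$ together with the two-sided bound $-A\leq\log(f/f_j)\leq tI^{j}(s_\tau)$ on $\Omega_j$ (both parts of Lemma~\ref{lem:bounds}) to replace $\eta(tI^{j}(s_\tau))$ by $\eta(\log(f/f_j))=1-f_j/f-e^{A}\log(f/f_j)$. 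After summing in $j$, the linear part contributes $te^{A}\E[fZ]=e^{A}\E[f\log f]$, and Proposition~4.1 of~\cite{ledoux1996talagrands} gives $-\sum_j\E[f\log(f/f_j)]\leq-\E[f\log f]+\E[f]\log\E[f]$; the two $\E[f\log f]$ terms cancel and only $e^{A}\E[f]\log\E[f]$ survives. Your route via $f_j-h_j\leq f_j\log(f_j/h_j)$ loses exactly this cancellation, because the resulting logarithms are of the wrong ratio to tensorize. A secondary issue: replacing $\log(f_j/h_j)_{+}$ by $\log(f_j/h_j)$ goes the wrong way for an upper bound, so that step would also require a separate (and not obviously $O(\ell^{-3/2})$) control of $\E[f\log(h_j/f_j)_{+}]$.
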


\begin{proof}[Proof of Lemma~\ref{lem:h}]
	We begin the proof with the observation that
	\begin{equation}\label{eq:lem:h:dec}
	\E [(f-h_{j}) \1_{\Omega_{j}}] = \E [ f- h_{j}] + \E [(h_{j}-f) \1_{\Omega_{j}^\complement}] \leq \E [ f- h_{j}] + \E [h_{j} \1_{\Omega_{j}^\complement}]
	\end{equation}
	where the last estimate is due to the fact that $f$ is non-negative.
	Thanks to~\eqref{eq:def:h}, we obtain the decomposition
	\begin{align*}
		\E [f-h_{j}] &= \E [f (1-\exp(-tI^{j}(s_\tau)) - te^{2(1+\Delta)t - \log c(t,\ell)}I^{j}(s_\tau)]\\
		&\hspace{1em}+ t e^{2(1+\Delta)t- \log c(t,\ell)} \E [f  I^{j}(s_\tau) ]\\
		&= \E [f \eta(tI^{j}(s_\tau)) \1_{\Omega_{j}}] +  \E [f \eta(tI^{j}(s_\tau)) \1_{\Omega_{j}^\complement}]\\
		&\hspace{1em}+ t e^{2(1+\Lambda/\ell)t - \log c(t,\ell)} \E [f  I^{j}(s_\tau) ],
	\end{align*}
	where the function $\eta$ is defined in Lemma~\ref{lem:aux:bounds}.
	Note that $\eta$ is non-increasing on the interval $[-2(1+\Delta)t + \log c(t,\ell), \infty)$.
	This fact in combination with Lemma~\ref{lem:bounds} implies that
	\begin{equation*}
		\E [ f \eta(tI^{j}(s_\tau)) \1_{\Omega_{j}} ] \leq \E [ (f-f_{j} - e^{2(1+\Delta)t - \log c(t,\ell)} f \log(f/f_{j}) ) \1_{\Omega_{j}} ].
	\end{equation*}
	By the identities $\1_{\Omega_{j}} = 1- \1_{\Omega_{j}^\complement}$ and $\E [f- f_{j}]=0$, we thus obtain
	\begin{align*}
		\E [ f \eta(tI^{j}(s_\tau)) \1_{\Omega_{j}} ] &\leq\E [(f_{j}-f) \1_{\Omega_{j}^\complement}] +  e^{2(1+\Delta)t - \log c(t,\ell)} \E[ f \log(f/f_{j}) \1_{\Omega_{j}^\complement} ]\\
		&\hspace{+1em} - e^{2(1+\Delta)t - \log c(t,\ell)} \E [ f \log(f/f_{j}) ].\notag
	\end{align*}
	Using H\"older's inequality and Lemma~\ref{lem:aux:bounds}, we obtain the estimate
	\begin{equation*}
		\E [(f_{j}-f) \1_{\Omega_{j}^\complement}] \leq \E [ (f_{j}- f)^4 ]^{1/4} \cdot \P ( \Omega_{j}^\complement )^{3/4} \leq C  \ell^{-3/2},
	\end{equation*}
	and by the same argument $\E[ f \log(f/f_{j}) \1_{\Omega_{j}^\complement} ] \leq C \ell^{-3/2}$, $\E [f \eta(tI^{j}(s_\tau)) \1_{\Omega_{j}^\complement}] \leq C \ell^{-3/2}$, and $\E [h_{j} \1_{\Omega_{j}^\complement}] \leq C \ell^{-3/2}$.
	Putting these estimates into~\eqref{eq:lem:h:dec}, we obtain
	\begin{equation*}
		\E [(f-h_{j}) \1_{\Omega_{j}}] \leq e^{2(1+\Delta)t - \log c(t,\ell)} ( t\E [f  I^{j}(s_\tau) ]- \E [ f \log(f/f_{j}) ]  ) + C \ell^{-3/2},
	\end{equation*}
	and by summation over $j$,
	\begin{equation*}
		\sum_{j=1}^{\ell} \E [(f-h_{j}) \1_{\Omega_{j}}] \leq e^{2(1+\Delta)t - \log c(t,\ell)} (  t  \E [f Z ] -  \sum_{j=1}^\ell \E [ f \log(f/f_{j}) ] ) + C\ell^{-1/2}.
	\end{equation*}
	By application of Proposition~4.1 from~\cite{ledoux1996talagrands}, we have
	$$- \sum_{j=1}^\ell \E [ f \log (f/f_{j})] \leq - \E [ f \log f  ] + \E \left[ f \right]  \log \E [ f ],$$
	and thus
	\begin{equation*}
		\sum_{j=1}^{\ell} \E [(f-h_{j}) \1_{\Omega_{j}}] \leq 
		e^{2(1+\Delta)t-\log c(t,\ell)} \E \left[ f \right] \log \E [ f ] + C \ell^{-1/2}.
	\end{equation*}
\end{proof}
\begin{lem}\label{lem:r}
	Consider the function $r$ defined through $r(t,x) = x \log x + (1+t)(1-x)$. 
	Then, for any $s \in \Sc$ and $t \geq 0$,
	\begin{equation*}
		\E [  r((1+\Delta)t, \exp (t I^{j}(s) ) ) \, \1_{\Omega_{j}} ] \leq  C t^2  \ell^{-3/2} + {t^2} \E [ ( I^{j}(s) )^2 ] / 2.
	\end{equation*}
\end{lem}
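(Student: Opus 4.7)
The plan is to establish a pointwise chord bound for the integrand on $\Omega_j$ and then use the centering $\E[I^j(s)]=0$ together with H\"older's inequality to transfer the error to the small event $\Omega_j^c$, whose probability is already controlled by Lemma~\ref{lem:prob}.

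The essential step is the pointwise inequality: for any $u \geq 0$ and any real $y$ with $|y| \leq u$,
\begin{equation*}
r(u, e^y) \;\leq\; -u y + \tfrac{1}{2} y^2.
\end{equation*}
I would prove it by setting $F(y) = -uy + y^2/2 - r(u, e^y)$, computing $\partial_x r(u,x) = \log x - u$, and simplifying to obtain the factored derivative $F'(y) = (y-u)(1 - e^y)$. Since $F(0) = 0$, and since $F'$ is negative on $(-u, 0)$ and positive on $(0, u)$, the function $F$ attains its minimum on $[-u, u]$ at $y = 0$, whence $F \geq 0$ there.

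Applying this bound with $u = (1+\Delta)t$ and $y = tI^j(s)$ is legitimate on $\Omega_j$, since then $N_j$ has at most one point and consequently $|I^j(s)| \leq 1 + |m_s| \leq 1+\Delta$, where $m_s = \int s \, \mathrm{d}\Lambdatilde$ and $\|s\|_\infty \leq 1$. Multiplying by $\1_{\Omega_j}$ and taking expectation yields
\begin{equation*}
\E\bigl[r((1+\Delta)t, e^{tI^j(s)}) \1_{\Omega_j}\bigr] \leq -(1+\Delta) t^2 \, \E[I^j(s) \1_{\Omega_j}] + \tfrac{t^2}{2} \, \E[(I^j(s))^2 \1_{\Omega_j}].
\end{equation*}
The second summand is at most $t^2 \E[(I^j(s))^2]/2$, which is precisely the principal term in the conclusion. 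For the first summand I would exploit the centering of $I^j(s)$ to write $\E[I^j(s)\1_{\Omega_j}] = -\E[I^j(s)\1_{\Omega_j^c}]$, and then combine H\"older's inequality with Lemma~\ref{lem:aux:bounds} (providing $\E[(I^j(s))^4] \leq C$) and Lemma~\ref{lem:prob} to get
\begin{equation*}
|\E[I^j(s) \1_{\Omega_j^c}]| \leq \E[(I^j(s))^4]^{1/4} \, \P(\Omega_j^c)^{3/4} \leq C\, \Delta^{3/2} \lesssim \ell^{-3/2},
\end{equation*}
so the contribution of the first summand is bounded by $Ct^2 \ell^{-3/2}$, exactly the remainder appearing in the statement.

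The only genuinely non-routine step is spotting the pointwise chord bound and the factorization $F'(y) = (y-u)(1-e^y)$; once this identity is in hand, the rest is a straightforward expectation computation using the auxiliary tools already collected in Subsection~\ref{subsubs:notation:right}.
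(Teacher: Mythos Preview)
Your proof is correct and follows essentially the same route as the paper's. The paper defines $\delta(x)=r((1+\Delta)t,e^{tx})-x\eta'(0)-(tx)^2/2$ and factors $\delta'(x)=t^2(x-(1+\Delta))(e^{tx}-1)$, which is precisely your computation $F'(y)=(y-u)(1-e^y)$ under the change of variables $y=tx$, $u=(1+\Delta)t$ (with $\delta=-F$); the remaining steps---dropping the indicator on the quadratic term, using $\E[I^j(s)]=0$ together with H\"older and Lemmata~\ref{lem:prob} and~\ref{lem:aux:bounds} for the linear term---match the paper exactly.
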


\begin{proof}[Proof of Lemma~\ref{lem:r}]
	For fixed non-negative $t$ consider the functions $\eta, \delta$ defined through $\eta(x)= r((1+\Delta)t,e^{tx})= e^{tx}tx + (1+(1+\Delta)t) (1-e^{tx})$ and $\delta(x)=\eta(x) - x\eta'(0)-\frac{(tx)^2}{2}$, respectively.
	We have $\delta(0)=0$ and $\delta'(x)=t^2 (x-(1+\Delta)) (e^{tx}-1)$.
	Thus, the sign of $\delta'(x)$ coincides with the one of $x(x-(1+\Delta))$.
	This implies that $\delta(x) \leq \delta(0) = 0$ for all $x \leq 1+\Delta$, and hence $\eta(x) \leq x \eta'(0)+ (tx)^2/2$.
	Since the estimate $I^{j}(s) \leq 1+\Delta$ holds on $\Omega_{j}$, we obtain by the preceding arguments
	\begin{equation*}
		r((1+\Delta)t, e^{tI^{j}(s)}) \1_{\Omega_{j}} \leq ( - (1+\Delta)t^2 I^{j}(s) + (tI^{j}(s))^2/2) \1_{\Omega_{j}}.
	\end{equation*}
	Taking expectations on both sides yields
	\begin{align*}
		\E[ r((1+\Delta)t, \exp (t I^{j}(s) ) ) \1_{\Omega_{j}} ] &\leq \E [ ( - (1+\Delta)t^2 I^{j}(s) + (tI^{j}(s))^2/2 )  \1_{\Omega_{j}} ].
	\end{align*}
	Therefrom, by means of the relation $\1_{\Omega_{j}} \leq 1$, we obtain
	\begin{equation*}
			\E [ r((1+\Delta)t, \exp (t I^{j}(s) ) ) \1_{\Omega_{j}} ] \leq -(1+\Delta) t^2 \E [I^{j}(s) \1_{\Omega_{j}}] + t^2\E [ ( I^{j}(s) )^2 ]/2.
	\end{equation*}
	Finally, the decomposition $1 = \1_{\Omega_{j}} + \1_{\Omega_{j}^\complement}$, H\"older's inequality and Lemma~\ref{lem:aux:bounds} imply that
	\begin{equation*}
		\E [ r((1+\Delta)t, \exp (t I^{j}(s) ) ) \1_{\Omega_{j}} ] \leq  Ct^2 \ell^{-3/2} + t^2 \E [ ( I^{j}(s) )^2 ]/2,
	\end{equation*}
	(recall that $\E [I^{j}(s)]=0$ for all $s \in \Sc$) which finishes the proof.
\end{proof}

\begin{rem}
	There is a correspondence between some of the auxiliary results proved above and results appearing in~\cite{klein2005concentration}.
	Lemmata~3.1, 3.2, and~3.3 therein correspond to our Lemmata~\ref{lem:bounds}, \ref{lem:h}, and~\ref{lem:r}, respectively.
	Both, results and proofs turn out to be more intricate in our PPP setup. 
\end{rem}

\subsection{Proof of Theorem~\ref{thm:right}}\label{subs:proof}

	First note that it is sufficient to prove statements~\ref{thm:right:a}--\ref{thm:right:c} of Theorem~\ref{thm:right} for the case of finite $\Sc$.
	Based on this, the case of countable $\Sc$ follows using the monotone convergence theorem.
	Thus, we assume from now on without loss of generality that $\Sc = \{s_1,\ldots,s_m\}$, and the preceding results from Section~\ref{subsubs:notation:right} (which were mostly obtained under the validity of Assumption~\ref{ass:finite}) are available.
	For fixed $t$ and $\ell \geq \ell_0 = \ell_0(t)$ (here, $\ell_0(t)$ is defined as in the preceding subsection), let us represent the PPP $N$ as the superposition of $\ell$ i.i.d.\,PPPs $N_{j}$ with intensity measure $\Lambdatilde$ as in~\eqref{eq:inf:div}.
	Then, application of Proposition~4.1 from~\cite{ledoux1996talagrands} and the decomposition $\1_{\Omega}=\1_{\Omega_{j}} + \1_{\Omega_{j}^\complement}$ yield
	\begin{align}\label{eq:dec:bw:square}
		\E \left[ f \log f \right] - \E [ f ] \log \E [f ] &\leq \sum_{j=1}^{\ell} \E [ f \log(f/f_{j}) ] \notag\\
		&\hspace{-3em}= \underbrace{\sum_{j=1}^{\ell} \E [ f \log(f/f_{j}) \, \1_{\Omega_{j}} ]}_{=: \, \square} + \underbrace{\sum_{j=1}^{\ell} \E [ f \log(f/f_{j}) \, \1_{\Omega_{j}^\complement} ] }_{=: \, \blacksquare},
	\end{align}
	and we investigate the two terms separately.
	
	\noindent \emph{Examination of $\square$}:
	For $j \in \{1,\ldots,\ell\}$, consider the strictly positive random variables $g_{j}$ defined through
	\begin{equation*}
		g_{j} = \sum_{i=1}^m \P^{\vee j}(\tau = i) \exp\left(t I(s_i) \right). 
	\end{equation*}
	We have the elementary decomposition
	\begin{equation}\label{eq:dec:centred:g}
	\E [ f \log ( f/f_{j} ) \1_{\Omega_{j}} ] = \E [ g_{j} \log ( f/f_{j} )  \1_{\Omega_{j}} ] + \E [ ( f-g_{j} ) \log ( f/f_{j} ) \1_{\Omega_{j}} ]. 
	\end{equation}
	Note that $\E^{\vee j} [ f/f_{j} ] = 1$, and thus
	\begin{equation*}
		\E [  g_{j} \log ( f/f_{j} )  \1_{\Omega_{j}} ]  \leq \sup \{  \E [  g_{j} h \1_{\Omega_{j}} ]  : h \text{ is }\Fs\text{-measurable with }\E^{j} [  e^h ]  \leq 1 \} .
	\end{equation*}
	Due to the duality formula for the relative entropy (cf., for instance, \cite{ledoux1996talagrands}, p.~83 or  \cite{massart2007concentration}, Proposition~2.12), we obtain
	\begin{equation*}
		\E [ g_{j} \log ( f/f_{j} ) \1_{\Omega_{j}} ] \leq \E [  g_{j} \1_{\Omega_{j}} \log(g_{j} \1_{\Omega_{j}}) ] - \E [ g_{j} \1_{\Omega_{j}} \log \E^{\vee j}[g_{j} \1_{\Omega_{j}} ] ] .
	\end{equation*}
	Putting this estimate into~\eqref{eq:dec:centred:g} yields
	\begin{align*}
		\E [  f \log\left( f/f_{j} \right) \1_{\Omega_{j}} ]  &\leq \E [  g_{j} \1_{\Omega_{j}} \log(g_{j} \1_{\Omega_{j}}) ]  - \E [  g_{j} \1_{\Omega_{j}} \log \E^{\vee j}[g_{j} \1_{\Omega_{j}} ] ] \\
		&\hspace{1em}+ \E [ ( f-g_{j} ) \log ( f/f_{j} ) \1_{\Omega_{j}} ],
	\end{align*}
	and by summation over $j$ we obtain
	\begin{align}
		\square &\leq \sum_{j=1}^{\ell} \E [ g_{j} \1_{\Omega_{j}} \log(g_{j} \1_{\Omega_{j}})] - \sum_{j=1}^{\ell} \E [g_{j} \1_{\Omega_{j}} \log \E^{\vee j}[g_{j} \1_{\Omega_{j}} ]]\notag \\
		&\hspace{1em} + \sum_{j=1}^{\ell} \E [ ( f-g_j) \log ( f/f_{j} ) \, \1_{\Omega_{j}} ]. \label{eq:dec:centred:g:sum}
	\end{align}
	Lemma~\ref{lem:bounds}, combined with the facts that $f-g_{j} \geq 0$ and $t I^{j}(s_\tau) \1_{\Omega_{j}} \leq  (1+\Delta)t \1_{\Omega_{j}}$, implies
	\begin{equation}\label{eq:ex:l:centred:bounds}
		\E [ (f-g_{j}) \log(f/f_{j}) \1_{\Omega_{j}} ] \leq (1+\Delta)t \E [  (f-g_{j})   \1_{\Omega_{j}} ].
	\end{equation}
	For $j \in \{1,\ldots, \ell \}$, consider the positive and $\Fs^{\vee j}$-measurable random variables $h_{j}$ as defined in~\eqref{eq:def:h}.
	By the variational definition of relative entropy (see \cite{ledoux1996talagrands}, Equation~(1.5) or \cite{massart2007concentration}, Proposition~2.12), we obtain
	\begin{align*}
		&\E^{\vee j} [  g_{j} \1_{\Omega_{j}} \log(g_{j} \1_{\Omega_{j}}) ]  - \E^{\vee j} [ g_{j} \1_{\Omega_{j}} \log \E^{\vee j}[g_{j} \1_{\Omega_ {j}} ] ]\\
		&\hspace{+15em}\leq \E^{\vee j}[  ( g_{j} \log (g_{j} / h_{j}) - g_{j} + h_{j} ) \1_{\Omega_{j}} ].
	\end{align*}
	By taking expectations on both sides of the last estimate, and combining the result with~\eqref{eq:ex:l:centred:bounds} we obtain from~\eqref{eq:dec:centred:g:sum} that
	\begin{align*}
		\square &\leq \sum_{j=1}^{\ell} \E [ ( g_{j} \log (g_{j}/h_{j}) + (1+(1+\Delta) t)(h_{j} - g_{j}) ) \1_{\Omega_{j}} ] \notag\\
		&\hspace{1em}+  (1+\Delta)t \sum_{j=1}^{\ell} \E [ \left( f-h_{j} \right)  \1_{\Omega_{j}} ] \eqdef \square_1 + \square_2 \label{eq:dec:centred:box}.
	\end{align*}
	In order to bound $\square_1$ from above, introduce the function $r$ defined via
	\begin{equation*}\label{eq:def:centred:r}
		r(t,x)=x \log x + (1+t) (1-x).
	\end{equation*}
	By the definition of $g_{j}$ and $h_{j}$ we have
	\begin{equation*}
		g_{j} \log (g_{j}/h_{j}) + (1+(1+\Delta)t) (h_{j}- g_{j}) = h_{j} r ((1+\Delta)t, g_{j}/h_{j}),
	\end{equation*}
	and the convexity of $r$ with respect to $x$ yields
	\begin{equation*}
		h_{j} r((1+\Delta) t,g_{j}/h_{j}) \leq \sum_{i=1}^{m} \P^{\vee j}(\tau = i) \exp(t I^{\vee j}(s_i))r((1+\Delta) t,\exp(t I^{j}(s_i))).
	\end{equation*}
	Hence, multiplication with $\1_{\Omega_{j}}$ and application of the $\E^{\vee j}$ operator yield
	\begin{equation*}\label{eq:dec:square:1}
	\begin{split}
	\E^{\vee j} [ h_{j} r((1+\Delta)t,g_{j}/h_{j})  \1_{\Omega_{j}} ]&\\  &\hspace{-7em}\leq  \sum_{i=1}^{m} \P^{\vee j}(\tau = i) \exp(t I^{\vee j}(s_i)) \E [ r((1+\Delta)t, \exp (t I^{j}(s_i) ) )  \1_{\Omega_{j}} ].
	\end{split}
	\end{equation*}
	The expectation on the right-hand side can be bounded by Lemma~\ref{lem:r}, and we obtain
	\begin{align}
		\E^{\vee j} [ h_{j} r((1+\Delta)t,g_{j}/h_{j}) \1_{\Omega_j} ] &\leq  C t^2 \ell^{-3/2} \E^{\vee j} [\exp(t I^{\vee j}(s_\tau))]\notag \\
		&\hspace{-6em}+  {t^2} \E^{\vee j} \left[ \sum_{i=1}^{m} \1_{\{\tau=i \}} \exp(t I^{\vee j}(s_i)) \E [ ( I^{j}(s_i))^2 ] \right]/2.\label{eq:dec:square:2}
	\end{align}
	In order to further bound the second term on the right-hand side of the last estimate, we consider the decomposition
	\begin{align}
		\E^{\vee j} \left[ \sum_{i=1}^{m} \1_{\{\tau=i \}} \exp(t I^{\vee j}(s_i)) \E [( I^{j}(s_i))^2 ] \right]&\notag\\
		&\hspace{-10em}= \E^{\vee j} \left[ \sum_{i=1}^{m} \1_{\{\tau=i \}} \exp(t I^{j}(s_i)) \1_{\Omega_{j}} \E [ ( I^{j}(s_i))^2 ] \right] \notag \\
		&\hspace{-10em}+ \E^{\vee j} \left[ \sum_{i=1}^{m} \1_{\{\tau=i \}} \exp(t I^{\vee j}(s_i)) \1_{\Omega_{j}^\complement} \E [ ( I^{j}(s_i))^2 ] \right]\label{eq:dec:square:3},
	\end{align}
	and we bound the two terms on the right-hand side of \eqref{eq:dec:square:3} separately.
	In order to treat the first one, note that on $\Omega_{j}$ we have $\exp(tI^{\vee j}(s_i)) \leq \exp\left( 2t(1+\Delta) + tI(s_i)\right)$, from which we conclude that
	\begin{align}
		\E^{\vee j} \left[ \sum_{i=1}^{m} \1_{\{\tau=i \}} \exp(t I^{\vee j}(s_i)) \1_{\Omega_{j}} \E [ ( I^{j}(s_i))^2 ] \right]& \notag\\
		&\hspace{-13em}\leq e^{2(1+\Delta)t} \E^{\vee j}\left[ \sum_{i=1}^{m} \1_{\{\tau =i \} } \exp(tI(s_i)) \E [ ( I^{j}(s_i))^2 ] \right].\label{eq:dec:square:4}
	\end{align}
	For the second term on the right-hand side of~\eqref{eq:dec:square:3}, we have by Lemma~\ref{lem:aux:bounds} that
	\begin{equation*}
		\E^{\vee j} \left[ \sum_{i=1}^{m} \1_{\{\tau=i \}} \exp(t I^{\vee j}(s_i)) \1_{\Omega_{j}^\complement} \E [ ( I^{j}(s_i))^2 ] \right]  \leq C \E^{\vee j} [  \exp(t I^{j} (s_{\tau}) ) \1_{\Omega_{j}^\complement}],
	\end{equation*}
	and thus by putting this last estimate and~\eqref{eq:dec:square:4} into~\eqref{eq:dec:square:3} we obtain
	\begin{align*}
		\E^{\vee j} &\left[ \sum_{i=1}^{m} \1_{\{\tau=i \}} \exp(t I^{\vee j}(s_i)) \E [ ( I^{j}(s_i))^2 ] \right] \\
		&\hspace{+5em}\leq e^{2(1+\Delta)t} \E^{\vee j}\left[ \sum_{i=1}^{m} \1_{\{\tau =i \} } \exp(tI(s_i)) \E [ ( I^{j}(s_i))^2 ] \right]\\
		&\hspace{6em}+ C \E^{\vee j} [  \exp(t I^{\vee j} (s_{\tau}) ) \1_{\Omega_{j}^\complement}].
	\end{align*}
	By taking expectations on both sides of~\eqref{eq:dec:square:2} and summation over $j$, we obtain by means of the derived estimates in combination with Lemma~\ref{lem:aux:bounds} that
	\begin{align*}
		\square_1 &\leq Ct^2 \ell^{-1/2} + {t^2}e^{2(1+\Delta) t} \E \left[ \sum_{i=1}^{m} \1_{ \{ \tau = i \} } \exp(tI(s_i)) \sum_{j=1}^{\ell} \E [ ( I^{j}(s_i))^2 ] \right]/2\\
		&\hspace{1em}+ {Ct^2} \sum_{j=1}^{\ell} \E [ \exp(t I^{\vee j} (s_{\tau}) ) \1_{\Omega_{j}^\complement}]/2.
	\end{align*}
	Since
	$\sum_{j=1}^{\ell} \E [ ( I^{j}(s_i))^2 ] \leq V$
	and
	$\E [ \exp(t I^{\vee j} (s_{\tau}) ) \1_{\Omega_{j}^\complement}]  \leq C \ell^{-3/2}$ (the last estimate follows from H\"older's inequality and Lemma~\ref{lem:aux:bounds}),
	we obtain
	\begin{equation*}
		\square_1 \leq Ct^2 \ell^{-1/2} + {t^2} e^{2(1+\Delta) t}V \E [f]/2.
	\end{equation*}
	A suitable bound for $\square_2$ follows directly from Lemma~\ref{lem:h}.
	By combining the derived estimates for $\square_1$ and $\square_2$, we obtain
	\begin{equation}\label{eq:est:white}
	\begin{split}
	\square &\leq C(1+t^2) \ell^{-1/2} +{t^2} e^{2(1+\Delta) t}V \E [f]/2\\
	&\hspace{1em}+ (1+\Delta)t e^{2(1+\Delta)t-\log c(t,l)} \E \left[ f \right] \log \E [ f ].
	\end{split}
	\end{equation}
	
	\noindent \emph{Examination of $\blacksquare$}:
	By H\"older's inequality, Lemmata~\ref{lem:prob},~\ref{lem:bounds}, and~\ref{lem:aux:bounds}, we have
	\begin{equation}\label{eq:est:black}
	\blacksquare \leq \sum_{j=1}^{\ell} \E [(tf I^{j}(s_\tau))^4]^{1/4} \P (\Omega_{j}^\complement)^{3/4} \leq C \ell^{-1/2}.
	\end{equation}
	
	We now merge the examinations of the terms $\square$ \ and $\blacksquare$.
	More precisely, by combining~\eqref{eq:dec:bw:square} with~\eqref{eq:est:white} and~\eqref{eq:est:black} and letting $\ell$ tend towards infinity we obtain that
	\begin{equation*}
	tL'(t) - (te^{2t}+1)L(t) \leq t^2 e^{2t} (V/2).
	\end{equation*}
	Now, the rest of the proof follows in complete analogy to the one of Theorem~2.1 in~\cite{klein2005concentration}.
\qed

\section{Proof of Theorem~\ref{thm:left}}\label{s:app:left}

\subsection{Notation and preparatory results}\label{subsec:aux:left}

Apart from redefinitions in the sequel, we maintain the notation introduced in Section~\ref{subsubs:notation:right} for the proof of Theorem~\ref{thm:right}.
In particular, we use again the representation $N \stackrel{d}{=} \sum_{j=1}^{\ell} N_{j}$ of the PPP $N$ as the superposition of independent PPPs $N_{j}$ with intensity $\Lambdatilde$ and use the shorthand notations $\Lambdab = \Lambda(\X)$ and $\Delta =\Lambdab/\ell$.
Besides, we retain the definition $\Omega_{j} = \{X_{j} \leq 1 \}$ where $X_{j} = N_{j}(\X)$.
Let us further assume that Assumption~\ref{ass:finite} holds, that is, $\Sc=\{s_1,\ldots,s_m\}$ is finite.
Define now
\begin{equation*}
	L_i(t) = \sum_{j=1}^{\ell} \log \E [\exp(-t I^{j}(s_i))], \qquad i \in \{1,\ldots,m\}.
\end{equation*}

The corresponding \emph{exponentially compensated} empirical process is
$T_i(t) \defeq S_n(s_i) + t^{-1} L_i(t)$.
In addition to $Z$, let us define $Z_t \defeq \sup_{i \in \{ 1,\ldots,m\} } T_i(t)$ (for notational convenience, from now on we use the shorthand notation $\sup_i$/$\inf_i$ when the supremum/infimum over $i \in \{1,\ldots,m\}$ is taken) and redefine $f =f(t) = \exp(-t Z_t)$ and $f_{kj}=f_{kj}(t) = \E^{j}[f]$ (the $\sigma$-fields $\Fs^{j}$ are defined as in Section~\ref{s:app:right}).
Finally, we define $F(t) = \E [f]$ and $\Lc(t) = \log F(t)$.
The main strategy of the proof given in the next subsection is to derive a differential inequality for $\Lc$.
Let $\tau=\tau(t)$ denote the minimal value of $i \in \{1, \ldots, m\}$ such that $Z_t = T_i(t)$.
As before, $C$ denotes always some constant (whose value is independent from $\ell$) which might have different values in different contexts. 

\begin{lem}\label{lem:aux:bounds:left}
	Let Assumption~\ref{ass:finite} hold.
	Then, the estimate $\E [X] \leq C$ holds true, where $X$ can be replaced by any of the following random variables: \begin{enumerate}[label=\alph*), itemsep=0pt]
		\item\label{it:aux:bounds:left:a} $\exp(-4t I^{j}(s_\tau))$,
		\item $(f_{j}-f-\widetilde \psi_\ell(t) f \log(f_{j}/f))$,
		\item $(f(e^{\eta_{j}} - 1 - \widetilde \psi_\ell(t) \eta_{j} ))^4$,
		\item $((g_{j}-f)\log(f_{j}/f))^4$,
		\item\label{it:aux:bounds:left:e} $(I_{j}(s_i))^4 e^{-4tI_{j}(s_i)}$, and
		\item\label{it:aux:bounds:left:f} $g_j \log(g_j/\E^j[g_j])$.
	\end{enumerate}
	Here $g_{j}$, $\eta_{j}$ and $\widetilde \psi_\ell$ are defined in Lemma~\ref{lem:phi} and its proof, respectively.
	The constant $C$ can be chosen independent of $j$, and in statements \ref{it:aux:bounds:left:a} and \ref{it:aux:bounds:left:e}, it can in addition be chosen independently of $s_\tau$ and $s_i$, respectively.
\end{lem}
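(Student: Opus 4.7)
The entire lemma reduces to a uniform moment and exponential-moment control of the centred integral $I^{j}(s)$ for functions $s\colon \mathcal{X} \to [-1,1]$. The key input is the deterministic bound $|I^{j}(s)| \leq X_{j} + \Delta$, where $X_{j} = N_{j}(\mathcal{X}) \sim \operatorname{Poisson}(\Delta)$ and $\Delta = \bar\Lambda/\ell \to 0$ as $\ell \to \infty$, combined with the Poisson moment generating function $\E[e^{uX_{j}}] = \exp(\Delta(e^{u}-1))$. Together, these yield, for each fixed $u \in \mathbb{R}$ and integer $p \geq 1$, a uniform estimate
\[
\sup_{\ell \geq \ell_0}\sup_{\|s\|_\infty \leq 1}\bigl(\E[\exp(uI^{j}(s))] + \E[|I^{j}(s)|^{p}]\bigr) \leq C(u,p),
\]
independent of $j$ and $s$.

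First, I would dispatch items \ref{it:aux:bounds:left:a} and \ref{it:aux:bounds:left:e} directly. Item \ref{it:aux:bounds:left:a} is the instance $u=-4t$ of the display above applied uniformly over $s$, hence also for the random index $\tau$. Item \ref{it:aux:bounds:left:e} follows by Cauchy--Schwarz, since $\E[(I^{j}(s_i))^{4}e^{-4tI^{j}(s_i)}] \leq \E[(I^{j}(s_i))^{8}]^{1/2}\E[\exp(-8tI^{j}(s_i))]^{1/2}$, and both factors are uniformly bounded by the preceding estimate.

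Next, for items (b), (c), (d), (f), I would invoke the forthcoming Lemma~\ref{lem:phi} (the left-hand side analog of Lemma~\ref{lem:bounds}), which controls $f_{j}/f$, $\eta_{j}$, and $g_{j}$ in terms of $\exp(\pm t I^{j}(s_\tau))$, up to multiplicative factors converging to $1$ as $\ell \to \infty$. Substituting these bounds, each random variable reduces to a product of (a fixed power of) $f$, an exponential $\exp(c t I^{j}(s_\tau))$ with $c$ a fixed constant, and a smooth function of $t I^{j}(s_\tau)$ vanishing to second order at the origin, arising from the Taylor expansions of $1 - e^{-x} - \widetilde\psi_\ell x$ and of $x\log x - x + 1$. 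Raising to the fourth power where required and applying H\"older's inequality together with the uniform exponential-moment bound above yields the desired estimate. For item \ref{it:aux:bounds:left:f}, I would use the pointwise bound $g_{j} \leq C \exp(-t I(s_\tau))$ (from the definition and Lemma~\ref{lem:phi}) and combine it with the trivial inequality $y\log y \leq y^{2}$ for $y \geq 1$ and $|y\log y| \leq e^{-1}$ for $y \in [0,1]$ to bound $g_{j}\log(g_{j}/\E^{j}[g_{j}])$ in absolute value by a bounded power of $g_{j}$ plus a log term that is again controlled by $I^{j}(s_\tau)$.

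The main obstacle will be the book-keeping of the exponents in item (c): after raising $f \cdot (e^{\eta_{j}}-1-\widetilde\psi_\ell \eta_{j})$ to the fourth power, the effective constant in $\exp(c' t I^{j}(s_\tau))$ produced by Lemma~\ref{lem:phi} can become rather large, and one must verify that $\Delta(e^{c' t}-1)$ still remains uniformly bounded for $\ell \geq \ell_0(t)$ so that the Poisson MGF estimate applies. Once this is checked, all six bounds reduce to routine invocations of the Poisson MGF and H\"older's inequality, in exact parallel to the omitted proof of the companion right-hand side Lemma~\ref{lem:aux:bounds}.
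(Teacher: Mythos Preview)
Your approach is essentially the one intended by the paper, which in fact omits the proof entirely (writing only ``all the estimates are easily derived and we thus omit the proof''); the companion Lemma~\ref{lem:aux:bounds} is likewise omitted, with the single hint that its item~(c) requires Lemma~\ref{lem:bounds}\ref{it:lem:bounds:a}. Your sketch via the deterministic bound $|I^{j}(s)|\le X_{j}+\Delta$ and the Poisson moment generating function is exactly the right engine, and is more detailed than what the paper records.

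One correction is worth flagging. The lemma you want for controlling $f_{j}/f$ is Lemma~\ref{lem:psi} (the left-hand analogue of Lemma~\ref{lem:bounds}), not Lemma~\ref{lem:phi} (which is the analogue of Lemma~\ref{lem:h}). This matters logically: the proof of Lemma~\ref{lem:phi} explicitly invokes items~(b)--(d) of the very lemma you are trying to prove, so citing it would be circular. Lemma~\ref{lem:psi}\ref{it:lem:psi:b} also uses item~\ref{it:aux:bounds:left:a} of the present lemma, but Lemma~\ref{lem:psi}\ref{it:lem:psi:a}---the bound $f_{j}/f\le\exp(tI^{j}(s_\tau)+\ell_{j\tau})$---is established by a direct argument that does not rely on Lemma~\ref{lem:aux:bounds:left} at all, and this is the only forward ingredient you actually need (together with the elementary uniform estimate $|\ell_{j\tau}(t)|\le C\Delta$). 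With that adjustment your plan goes through cleanly; the exponent book-keeping you flag in item~(c) is harmless, since the Poisson MGF factors accumulate multiplicatively over the $\ell$ pieces to at most $\exp(C\bar\Lambda)$, independent of~$\ell$.
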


\begin{proof}
	As for the proof of Lemma~\ref{lem:aux:bounds}, all the estimates are easily derived and we thus omit the proof.
\end{proof}
%

\begin{lem}\label{lem:psi}
	Let Assumption~\ref{ass:finite} hold and $\psi_\ell(t) = \frac{1}{2} ( 1+e^{2(1+\Delta)t} )$.
	Set $\ell_{ji}(t) = \log \E [\exp(-t I^{j}(s_i))]$.
	Then, the following estimates hold almost surely.
	\begin{enumerate}[label=\alph*)]
		\item\label{it:lem:psi:a} $f_{j}/f \leq \exp(t I^{j}(s_\tau) + \ell_{j\tau})$, and
		\item\label{it:lem:psi:b} $\exp(t I^{j}(s_\tau) + \ell_{j\tau}) \leq \psi_\ell(t) \cdot (1 + \alpha_\ell) + \beta \cdot \ell^{-3/2}$ on $\Omega_{j}$ where $\alpha_\ell$ is a monotone sequence decreasing to $0$ as $\ell$ to $\infty$ and $\beta > 0$.
	\end{enumerate}
\end{lem}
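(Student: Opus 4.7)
The plan is to treat the two assertions via a common $\sigma$-field decomposition. Introduce the splits $I(s_i)=I^j(s_i)+I^{\vee j}(s_i)$ and $L_i(t)=\ell_{ji}(t)+L_i^{\vee j}(t)$, where the $\vee j$ quantities collect the contributions of all the PPPs except $N_j$, and define the leave-one-out exponentially compensated supremum
\[
Z_t^{\vee j} \;=\; \sup_i \bigl[ I^{\vee j}(s_i) + t^{-1} L_i^{\vee j}(t) \bigr],
\]
which is $\Fs^{j}$-measurable. Let $\tau^{\vee j}$ denote its minimal argmax.

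For part~(a), the idea is to squeeze $Z_t^{\vee j}$ between two quantities derived from $Z_t$. First, plugging the $\Fs^{j}$-measurable index $\tau^{\vee j}$ into the supremum defining $Z_t$ yields $Z_t \geq Z_t^{\vee j} + I^j(s_{\tau^{\vee j}}) + t^{-1} \ell_{j,\tau^{\vee j}}$; multiplying by $-t$, exponentiating, and applying $\E^j$ together with the identity $\E^j[\exp(-tI^j(s_{\tau^{\vee j}}))] = \exp(\ell_{j,\tau^{\vee j}})$ (valid by $\Fs^j$-measurability of $\tau^{\vee j}$ and independence of $N_j$ from $\Fs^{j}$) produces the upper bound $f_j \leq \exp(-tZ_t^{\vee j})$. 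Conversely, plugging the true argmax $\tau$ into the supremum defining $Z_t^{\vee j}$ gives $Z_t^{\vee j} \geq Z_t - I^j(s_\tau) - t^{-1}\ell_{j\tau}$, which exponentiates to $f \geq \exp(-tZ_t^{\vee j})\exp(-tI^j(s_\tau) - \ell_{j\tau})$. Dividing the two inequalities produces the claim of~(a).

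For part~(b), I would bound the two exponential factors of $\exp(tI^j(s_\tau)+\ell_{j\tau})$ separately on $\Omega_j$. Since $|s_\tau|\leq 1$ and $X_j\leq 1$ on $\Omega_j$, we have $I^j(s_\tau)\leq 1+\Delta$, and the AM--GM inequality applied to $1$ and $e^{2(1+\Delta)t}$ supplies the sharp bound $e^{(1+\Delta)t}\leq \tfrac{1}{2}(1+e^{2(1+\Delta)t})=\psi_\ell(t)$. For the compensator, explicit computation of the Poisson Laplace transform gives $\ell_{ji}(t)=\int(e^{-ts_i}-1+ts_i)\,d\Lambdatilde$, so the elementary estimate $0\leq e^{-x}-1+x\leq x^2 e^{|x|}/2$ combined with $|s_i|\leq 1$ and $\Lambdatilde(\X)=\Delta=\Lambdab/\ell$ yields $0\leq \ell_{j\tau}\leq C(t)/\ell$ uniformly. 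Consequently $e^{\ell_{j\tau}}\leq 1+\alpha_\ell$ for a deterministic monotone null sequence $\alpha_\ell=O(\ell^{-1})$, and multiplying delivers $\exp(tI^j(s_\tau)+\ell_{j\tau})\leq \psi_\ell(t)(1+\alpha_\ell)$ on $\Omega_j$; the additive $\beta\ell^{-3/2}$ provides extra slack should a more refined Taylor remainder be needed.

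The main obstacle is the careful bookkeeping in part~(a): one must use \emph{two distinct} argmaxes---the true $\tau$ and the leave-one-out $\tau^{\vee j}$---and place them on the correct sides of the two inequalities so that the compensator $\ell_{j\tau}$ lands on the intended side of the final ratio. The identity $\E^j[\exp(-tI^j(s_{\tau^{\vee j}}))]=\exp(\ell_{j,\tau^{\vee j}})$ replaces the Jensen-inequality trick used in the right-deviation analogue (Lemma~\ref{lem:bounds}\ref{it:lem:bounds:a}) and is the structural device that cleanly cancels the exponential correction under conditional expectation. Part~(b) is then a routine computation once the AM--GM inequality and the Poisson log-MGF formula are in hand.
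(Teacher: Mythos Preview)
Your argument for part~(a) is essentially identical to the paper's: both introduce the leave-one-out compensated supremum $Z_t^{\vee j}$ (the paper writes $Z^{j}$), use the $\Fs^{\vee j}$-measurable argmax $\tau^{\vee j}$ to obtain $f_j\le\exp(-tZ_t^{\vee j})$ via the exact identity $\E^{j}[\exp(-tI^{j}(s_{\tau^{\vee j}}))]=\exp(\ell_{j,\tau^{\vee j}})$, and then insert the true argmax $\tau$ into the leave-one-out supremum to close the sandwich.

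For part~(b) your route is genuinely different from the paper's, and in fact more direct. The paper does \emph{not} compute $\ell_{j\tau}$ via the Poisson log-MGF; instead it writes $\exp(\ell_{j\tau})=\E[\exp(-tI^{j}(s_\tau))]$, splits this expectation over $\Omega_{j}$ and $\Omega_{j}^\complement$, bounds the first piece by a Hoeffding-type convexity argument on the bounded variable $Y=-I^{j}(s_\tau)\1_{\Omega_{j}}$ (yielding $\tfrac12(e^{-(1+\Delta)t}+e^{(1+\Delta)t})(1+o(1))$), and bounds the second piece by H\"older and Lemma~\ref{lem:prob} to get the $C\ell^{-3/2}$ term. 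After multiplication by $e^{(1+\Delta)t}$ the first piece produces exactly $\psi_\ell(t)(1+\alpha_\ell)$ and the second produces the $\beta\ell^{-3/2}$ term---so in the paper's argument that term is \emph{not} slack but genuinely arises from the complement event. Your approach, by contrast, exploits the explicit Poisson structure to get $0\le\ell_{j\tau}\le C(t)/\ell$ deterministically and uniformly in $\tau$, which combined with AM--GM already yields $\exp(tI^{j}(s_\tau)+\ell_{j\tau})\le\psi_\ell(t)(1+\alpha_\ell)$ on $\Omega_{j}$ without any $\ell^{-3/2}$ correction. Your proof is therefore shorter and model-specific, whereas the paper's argument parallels the distribution-free Klein--Rio template (boundedness plus moment control) more closely, at the cost of the extra decomposition and the additive remainder.
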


\begin{proof}
	For $s \in \Sc$, define $I^{\circ j}(s) = I(s) - I^{j}(s)$ and
	\begin{equation*}
	Z^{j} \defeq \sup_{s \in \Sc} ( I^{\circ j}(s) + t^{-1} \log \E [ \exp (-t I^{\circ j}(s)) ] ). 
	\end{equation*}
	Let $\tau_{j}$ be the first index $i \in \{ 1,\ldots,m \}$ such that
	\begin{equation*}
		Z^{j} =  I^{j}(s_{i}) + t^{-1} \log \E [ \exp (-t I^{j}(s_{i})) ].
	\end{equation*}
	Then,
	$f \leq \exp(-t Z^{j}) \exp(-t I^{j}(s_{\tau_{j}}) - \ell_{j\tau_{j}}(t)),$
	and hence $\E^{j}[f]\leq \exp(-tZ^{j})$.
	By definition of $Z^{j}$, we have
	$\exp ( -tZ^{j} ) \leq f \cdot \exp(tI^{j} (s_{\tau}) + \ell_{j\tau}(t) ),$
	and Statement~\ref{it:lem:psi:a} follows.
	In order to proof statement~\ref{it:lem:psi:b}, first note that $\exp(t I^{j}(s_\tau)) \leq e^{(1+\Delta)t}$ on $\Omega_{j}$, and it remains to find an estimate for
	\begin{equation*}
		\exp(\ell_{j\tau}(t)) = \E [\exp(-t I^{j}(s_\tau))].
	\end{equation*}
	Consider the decomposition
	\begin{equation}\label{eq:dec:lem:psi}
		\E [\exp(-tI^{j}(s_\tau))] = \E [\exp(-tI^{j}(s_\tau)) \1_{\Omega_{j}}] + \E [\exp(-tI^{j}(s_\tau)) \1_{\Omega_{j}^\complement}].
	\end{equation}
	In order to bound the first term on the right-hand side of~\eqref{eq:dec:lem:psi}, note that $\E [\exp(-tI^{j}(s_\tau)) \1_{\Omega_{j}}] \leq \E [e^{tY}]$ with $Y=-I^{j}(s_\tau) \1_{\Omega_{j}}$.
	Note that
	\begin{align*}
		\vert \E Y \vert \leq \E \vert Y \vert \leq \E \vert I^j(s_\tau) \vert \leq \E X_j + \Lambdatilde(\X) = \frac{2\Lambdab}{\ell} \to 0 
	\end{align*}
	as $\ell$ tends to $\infty$.
	Thus, by the convexity of the exponential function, we have
	\begin{align}
		\E [e^{tY}] &\leq \frac{1+ \Delta - \E Y}{2(1+\Delta)} e^{-(1+\Delta)t} + \frac{\E Y + 1+ \Delta}{2(1+\Delta)} e^{(1+\Delta)t}\notag\\
		&= \frac 1 2 (  e^{-(1 + \Delta) t} + e^{(1+\Delta)t} )  (1 + o(1)). \label{eq:lem:psi:2}
	\end{align}
	The second term one the right-hand side of~\eqref{eq:dec:lem:psi} is bounded using H\"older's inequality, Lemmata~\ref{lem:prob} and~\ref{lem:aux:bounds:left} as follows:
	\begin{equation}\label{eq:lem:psi:1}
	\E [\exp(-tI^{j}(s_\tau)) \1_{\Omega_{j}^\complement}] \leq \E [\exp(-4tI^{j}(s_\tau)) ]^{1/4} \cdot \P (\Omega_{j}^\complement)^{3/4} \leq C \ell^{-3/2},
	\end{equation}
	and statement~\ref{it:lem:psi:b} follows from the combination of~\eqref{eq:lem:psi:2} and~\eqref{eq:lem:psi:1}.
\end{proof}

\begin{lem}\label{lem:phi}
	For $j \in \{ 1, \ldots, \ell \}$, define positive random variables $g_{j}$ via
	\begin{equation*}
		g_{j} = \sum_{i=1}^{m} \P^{j} (\tau = i) \exp (-t I(s_i) - L_i(t)).
	\end{equation*}
	Set $\varphi_\ell = \widetilde \psi_\ell \cdot \log \widetilde \psi_\ell$ where $\widetilde \psi_\ell = \psi_\ell \cdot (1+ \alpha_\ell) + \beta \ell^{-3/2}$ with $\psi_\ell$, $\alpha_\ell$, and $\beta$ defined as in Lemma~\ref{lem:psi}.
	For sufficiently large $\ell$, let $\theta_\ell$ be the unique positive solution of the equation $\phi_\ell(t) = 1$.
	Then, for any $t \in (0, \theta_\ell)$,
	\begin{align*}
		\sum_{j=1}^\ell \E [(g_{j} - f) \log(f_{j}/f)] \\
		&\hspace{-6em}\leq \frac{\phi_\ell(t)}{1 - \phi_\ell(t)} \left( \sum_{j=1}^{\ell} \E (g_{j} \log (g_{j}/\E^{j} [ g_{j} ]) ) - \E [f \log f] \right) + C \ell^{-1/2} .
	\end{align*}
\end{lem}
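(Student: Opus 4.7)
The plan is to view the claim, after multiplying through, as the equivalent unnormalised estimate
\[
A \leq \phi_\ell(t)\,(A + B - E) + C\ell^{-1/2},
\]
where I abbreviate $A = \sum_{j=1}^{\ell} \E[(g_{j} - f) \log(f_{j}/f)]$, $B = \sum_{j=1}^{\ell} \E[g_j \log(g_j/\E^j[g_j])]$, and $E = \E[f \log f]$. Rearranging and dividing by $1-\phi_\ell(t) > 0$ (valid for $t \in (0,\theta_\ell)$ by definition of $\theta_\ell$) then yields the claimed factor $\phi_\ell/(1-\phi_\ell)$.

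The first step is to split each summand of $A$ via $\1_{\Omega_j} + \1_{\Omega_j^\complement} = 1$. On $\Omega_j^\complement$, H\"older's inequality combined with the $L^4$-bound in Lemma~\ref{lem:aux:bounds:left}(d) and the tail estimate $\P(\Omega_j^\complement) \leq \Delta^2/2$ from Lemma~\ref{lem:prob} yields a per-index contribution of order $\ell^{-3/2}$, hence an overall $C\ell^{-1/2}$ after summing over $j \in \{1,\ldots,\ell\}$.

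On $\Omega_j$, I would invoke Lemma~\ref{lem:psi} in order to bound $\log(f_j/f)\,\1_{\Omega_j} \leq \log \widetilde \psi_\ell$ pointwise, and simultaneously derive a comparison of the type $\E^j[g_j]\,\1_{\Omega_j} \leq \widetilde \psi_\ell\, f_j\,\1_{\Omega_j} + \text{remainder}$ from the explicit formula for $g_j$ (using once more Lemma~\ref{lem:psi}(b)). The product of these two ingredients is what produces the prefactor $\phi_\ell(t) = \widetilde \psi_\ell \log \widetilde \psi_\ell$. Applying the conditional duality formula for relative entropy (Donsker-Varadhan) on $\Fs^{\vee j}$ with $X = g_j$ then refines the naive bound $\log \widetilde \psi_\ell \cdot \E^j[g_j]$ into one that carries the entropy correction $\E^j[g_j \log(g_j/\E^j[g_j])]$, whose summation over $j$ is $B$. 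The Han / tensorization of entropy inequality $\sum_j \E[f \log(f/f_j)] \geq \E[f \log f] - \E[f]\log\E[f]$ applied to $f$ is finally used to collapse the remaining log-partition terms of the form $\sum_j \E[f \log f_j]$ into $E = \E[f\log f]$ with the correct sign.

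I expect the main obstacle to lie in orchestrating the Donsker-Varadhan and Han inequalities in the third step so that the multiplicative prefactor works out to \emph{exactly} $\varphi_\ell = \widetilde \psi_\ell \log \widetilde \psi_\ell$ and not a looser upper bound: the logarithmic factor from Donsker-Varadhan has to match the logarithmic factor from Lemma~\ref{lem:psi}(b), while the leading factor $\widetilde \psi_\ell$ from the comparison $\E^j[g_j] \leq \widetilde \psi_\ell f_j$ has to multiply these cleanly without degrading the entropy correction. Careful book-keeping of the $\ell^{-3/2}$ remainders coming from the $\Omega_j^\complement$ decompositions at each substep, so that they contribute at most $C\ell^{-1/2}$ to the final bound, is a further technicality.
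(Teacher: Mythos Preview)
Your overall target $A \leq \phi_\ell(t)(A+B-E) + C\ell^{-1/2}$ is correct, and the opening move $A\big|_{\Omega_j} \leq (\log\widetilde\psi_\ell)\sum_j\E[g_j-f]$ via Lemma~\ref{lem:psi}(b) and $g_j-f\geq 0$ is exactly what the paper does. The gap is in how you propose to bound $\sum_j\E[g_j-f]$.

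Your plan there does not work. The comparison $\E^{\vee j}[g_j]\leq \widetilde\psi_\ell f_j$ only yields $\sum_j\E[g_j-f]\leq \ell(\widetilde\psi_\ell-1)\E[f]$, which blows up with $\ell$ and carries no $B$ or $E$. Donsker--Varadhan does not ``refine a bound on $\E^{\vee j}[g_j]$''; it bounds expressions of the form $\E^{\vee j}[g_j\log(f/f_j)]$, which you have not yet produced. And Han's inequality $\sum_j\E[f\log(f/f_j)]\geq \E[f\log f]-\E[f]\log\E[f]$ points the wrong way: here one needs an \emph{upper} bound on $\sum_j\E[f\log(f/f_j)]$, and Han is never invoked in the paper's proof of this lemma.

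The missing idea is the following. Write $\eta_j = tI^j(s_\tau)+\ell_{j\tau}(t)$, so that $\E^{\vee j}[g_j]=\E^{\vee j}[f e^{\eta_j}]$ and, crucially, $\sum_j\eta_j=-\log f$. Adding and subtracting $\widetilde\psi_\ell\eta_j$ in $\sum_j\E[f(e^{\eta_j}-1)]$ gives
\[
\sum_j\E[g_j-f] \;=\; \sum_j\E\!\big[f\bigl(e^{\eta_j}-1-\widetilde\psi_\ell\eta_j\bigr)\big] \;-\; \widetilde\psi_\ell\,\E[f\log f],
\]
and this algebraic identity is how $E$ enters. On $\Omega_j$ one has $\log(f_j/f)\leq\eta_j\leq\log\widetilde\psi_\ell$; since $x\mapsto e^x-1-\widetilde\psi_\ell x$ is non-increasing on $(-\infty,\log\widetilde\psi_\ell]$, one may replace $\eta_j$ by $\log(f_j/f)$, which turns the first sum into $\widetilde\psi_\ell\sum_j\E[f\log(f/f_j)]$ (plus $\Omega_j^\complement$ remainders). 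Only now does Donsker--Varadhan apply: split $\E[f\log(f/f_j)]=\E[g_j\log(f/f_j)]+\E[(f-g_j)\log(f/f_j)]$ and bound the first term by $\E[g_j\log(g_j/\E^{\vee j}[g_j])]$, yielding $\sum_j\E[f\log(f/f_j)]\leq B+A$. Chaining these gives $\sum_j\E[g_j-f]\leq\widetilde\psi_\ell(B+A-E)+C\ell^{-1/2}$, and multiplying by $\log\widetilde\psi_\ell$ produces the factor $\phi_\ell=\widetilde\psi_\ell\log\widetilde\psi_\ell$ you need.
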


\begin{proof}
	Since $I^{\circ j}$ is $\Fs^{j}$-measurable, it is easy to verify that $$\E^{j}[g_{j}] = \E^{j} [f \exp(tI^{j}(s_{\tau}) + \ell_{j\tau}(t) )],$$
	and hence,
	\begin{equation*}
		\sum_{j=1}^{\ell} \E [g_{j} - f] = \sum_{j=1}^{\ell} \E [ f (\exp(tI^{j}(s_{\tau}) + \ell_{j\tau}(t)) -1 ) ].
	\end{equation*}
	Set $\eta_{j} = t I^{j}(s_{\tau}) + \ell_{j\tau}(t)$.
	Then,
	\begin{align}
		\sum_{j=1}^{\ell} \E [g_{j} - f] &= \sum_{j=1}^{\ell} \E [f(e^{\eta_{j}} - 1 - \widetilde \psi_\ell(t) \eta_{j} )] + \widetilde \psi_\ell(t) \E [f \sum_{j=1}^{\ell} \eta_{j}] \notag \\
		& = \sum_{j=1}^{\ell} \E [f(e^{\eta_{j}} - 1 - \widetilde \psi_\ell(t) \eta_{j} )] - \widetilde \psi_\ell(t) \E [f \log f],\label{lem:phi:1}
	\end{align}
	since $\sum_{j=1}^{\ell} \eta_{j} = - \log f$.
	Consider the first term on the right-hand side of~\eqref{lem:phi:1}. First, by H\"older's inequality and Lemma~\ref{lem:aux:bounds:left}
	\begin{equation*}
		 \sum_{j=1}^{\ell} \E [f(e^{\eta_{j}} - 1 - \widetilde \psi_\ell(t) \eta_{j} ) \1_{\Omega_{j}^\complement}] \leq C \ell^{-1/2}.
	\end{equation*}
	In order to bound $\sum_{j=1}^{\ell} \E [f(e^{\eta_{j}} - 1 - \widetilde \psi_\ell(t) \eta_{j} ) \1_{\Omega_{j}}]$ from above, note that the function $x \mapsto e^x - 1 -x \widetilde \psi_\ell(t)$ is non-increasing on the interval $(-\infty, \log \widetilde \psi_\ell(t)]$.
	Hence, we obtain by Lemma~\ref{lem:aux:bounds:left} that
	\begin{align*}
	\sum_{j=1}^{\ell} \E [f(e^{\eta_{j}} - 1 - \widetilde \psi_\ell \eta_{j} ) \1_{\Omega_{j}}] &\leq \widetilde \psi_\ell \sum_{j=1}^{\ell} \E [f \log(f/f_{j})]\\
		&\hspace{1em}- \sum_{j=1}^{\ell} \E [(f_{j}-f-\widetilde \psi_\ell f\log(f_{j}/f))\1_{\Omega_{j}^\complement}]\\
		&\leq \widetilde \psi_\ell \sum_{j=1}^{\ell} \E [f \log(f/f_{j})] + C \ell^{-1/2}.
	\end{align*}
	Putting the obtained estimates into~\eqref{lem:phi:1} yields
	\begin{equation*}
		\sum_{j=1}^{\ell} \E [g_{j} - f] \leq \widetilde \psi_\ell(t) \left( \sum_{j=1}^{\ell} \E [f \log(f/f_{j})] - \E[f \log f] \right) + C \ell^{-1/2}.
	\end{equation*}
	Using the same argument as in the proof of Theorem~\ref{thm:right} yields
	\begin{align}
		\sum_{j=1}^{\ell} \E [g_{j} - f]&\leq \widetilde \psi_\ell(t) (  \sum_{j=1}^{\ell} \E [g_{j} \log (g_{j} / \E^{j}[g_{j}]) \notag \\
		& +(g_{j} - f) \log(f_{j}/f)] - \E [f \log f] )  + C \ell^{-1/2}.\label{eq:lem:phi:2}
	\end{align}
	Now, in order to prove the claim assertion of the lemma, take note of the decomposition
	\begin{align}\label{eq:dec:left}
			\sum_{j=1}^{\ell} \E [ (g_{j} - f) \log (f_{j}/f) ] &= \sum_{j=1}^{\ell} \E [(g_{j} - f) \log (f_{j}/f) \1_{\Omega_{j}}] \notag \\
			&\hspace{1em}+  \sum_{j=1}^{\ell} \E [ (g_{j} - f) \log (f_{j}/f) \1_{\Omega_{j}^\complement}].
	\end{align}
	Using statement~\ref{it:lem:psi:b} of Lemma~\ref{lem:psi}, the estimate~\eqref{eq:lem:phi:2} and the definition of $\phi_\ell$, we can bound the first term as follows (note that $g_{j}-f \geq 0$):
	\begin{align*}
		\sum_{j=1}^{\ell} \E [(g_{j} - f) \log (f_{j}/f) \1_{\Omega_{j}}] &\leq \log \widetilde \psi_\ell(t) \sum_{j=1}^{\ell} \E [ g_{j} - f ]\\
		&\hspace{-13em}\leq \phi_\ell(t) \left( \sum_{j=1}^{\ell} [g_{j} \log (g_{j}/\E^{j} [g_{j}]) + (g_{j} -f) \log (f_{j}/f)] - \E [f \log f] \right) + C \ell^{-1/2}.
	\end{align*}
	The second summand on the right-hand side of~\eqref{eq:dec:left} can be bounded using H\"older's inequality, Lemma~\ref{lem:prob} and Lemma~\ref{lem:aux:bounds:left} once more:
	\begin{align*}
		\sum_{j=1}^{\ell} \E [ (g_{j} - f) \log (f_{j}/f) \1_{\Omega_\ell^\complement}] &\leq	\sum_{j=1}^{\ell} \E [ (g_{j} - f)^4 (\log(f_{j}/f))^4]^{1/4} \P(\Omega_\ell^\complement)^{3/4}\\
		&\leq C \ell^{-1/2}.
	\end{align*}
	Combining the bounds obtained for the two terms in~\eqref{eq:dec:left} implies the assertion of the lemma.
\end{proof}

\begin{rem}
	Both $\widetilde \psi_\ell(t)$ and $\phi_\ell(t)$ are non-increasing in $\ell$ and non-decreasing in $t$.
	Hence, the solution $\theta_\ell$ of the equation $\phi_\ell = 1$ (which exists for sufficiently large $\ell$) is non-decreasing in $\ell$ and the limit $\theta_\infty \defeq \lim_{\ell \to \infty} \theta_\ell$ satisfies $\theta_\infty \in [0.46, 0.47]$ (cf. p.~1075 in~\cite{klein2005concentration}).
	The approximate value of $\theta_\infty$ is of interest for the proof of Theorem~\ref{thm:left} which is done by considering different cases for the value of $t$ (cf.~\cite{klein2005concentration} for details).
\end{rem}

\begin{lem}\label{lem:Y}
	Let $Y$ be a random variable with values in $(-\infty, 1 + \Delta]$ and $\E [Y^2] < + \infty$.
	Then, for any positive $t$,
	\begin{equation*}
		\E [t Y e^{tY}] - \E [e^{tY}] \log \E[e^{tY}] \leq \frac{\E[Y^2]}{(1+\Delta)^2} ( 1+ ( (1+\Delta)t - 1 ) e^{(1+\Delta)t}  ).
	\end{equation*}
\end{lem}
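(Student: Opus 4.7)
The plan is to reduce the claimed bound to a simple pointwise comparison. Writing $a = 1 + \Delta$, I introduce the two auxiliary functions $\chi(x) := (x-1)e^x + 1$ and $\psi(u) := u\log u - u + 1$. A direct expansion of both terms gives
\begin{equation*}
\E[\chi(tY)] - \psi(\E[e^{tY}]) = \E[tYe^{tY}] - \E[e^{tY}]\log\E[e^{tY}],
\end{equation*}
so the left-hand side of the lemma equals $\E[\chi(tY)] - \psi(\E[e^{tY}])$. Since $\psi$ is convex on $(0,\infty)$ with its minimum $0$ at $u=1$, it is nonnegative, and dropping the $\psi$ term gives the cruder upper bound $\E[\chi(tY)]$.

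Next I observe that the right-hand side of the lemma is precisely $\frac{\E[Y^2]}{a^2}\chi(at)$, since $\chi(at) = (at-1)e^{at} + 1$. The proof thus reduces to the pointwise inequality $\chi(tY) \leq (Y/a)^2 \chi(at)$ almost surely, followed by taking expectations. Because $tY \leq at$ by hypothesis, this will follow if I can establish that the map $x \mapsto \chi(x)/x^2$ is nondecreasing on $\R$, where at $x=0$ the value is extended continuously to $1/2$ via Taylor expansion.

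The monotonicity is the one calculus check that carries the argument. Differentiating, $(\chi(x)/x^2)'$ has the same sign as $r(x)/x^3$, where $r(x) := (x^2-2x+2)e^x - 2$. A short computation gives $r(0)=0$ and $r'(x) = x^2 e^x \geq 0$, so $r$ is nondecreasing and thus $r(x)$ and $x^3$ always share a sign, making the derivative nonnegative. Applying the resulting monotonicity at $x = tY$ and integrating yields the claim. The only nonobvious ingredient is spotting the decomposition via $\chi$ and $\psi$, which is what converts an entropy-type quantity into a pointwise bound amenable to the monotonicity trick; once that identity is in hand, the remainder is routine.
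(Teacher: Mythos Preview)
Your argument is correct. The identity $\E[\chi(tY)]-\psi(\E[e^{tY}])=\E[tYe^{tY}]-\E[e^{tY}]\log\E[e^{tY}]$ with $\chi(x)=(x-1)e^x+1$ and $\psi(u)=u\log u-u+1$ is exactly right, dropping the nonnegative $\psi$-term is legitimate, and the monotonicity of $x\mapsto\chi(x)/x^2$ (via $r(x)=(x^2-2x+2)e^x-2$, $r(0)=0$, $r'(x)=x^2e^x\ge0$) gives the pointwise bound $\chi(tY)\le (Y/a)^2\chi(at)$ on $\{Y\le a\}$; integrating yields the lemma.

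The paper does not supply its own argument: it simply observes that the statement is Lemma~4.4 of Klein--Rio applied to the rescaled variable $Y/(1+\Delta)$, and the proof you have written is precisely the standard proof of that lemma. So your approach coincides with the one the paper defers to.
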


\begin{proof}
	The proof follows completely along the lines of the one of Lemma~4.4 in~\cite{klein2005concentration}, and we thus omit it.
\end{proof}

\begin{rem}
	Again, there is a correspondence between some of the auxiliary results here and the ones used in~\cite{klein2005concentration}.
	Lemmata~\ref{lem:psi} and~\ref{lem:phi} are versions of Lemmata~4.2 and~4.3 in~\cite{klein2005concentration} tailored to our framework.
	As already mentioned above, Lemma~\ref{lem:Y} is exactly the same as Lemma~4.4 in~\cite{klein2005concentration} with $Y$ being replaced with $\frac{Y}{1+\Delta}$.
\end{rem}

\subsection{Proof of Theorem~\ref{thm:left}}

	The essential arguments of the proof follow along the proof of Theorem~1.2 in~\cite{klein2005concentration}.
	Since the random functions $T_i(t)$ are analytic in $t$, the random function $f(t)$ is continuous and piecewise analytic as a function in $t$.
	Its derivative with respect to $t$ satisfies
	\begin{equation*}
		f' = -(Z_t + t Z_t')f
	\end{equation*}
	where $t Z_t' = L_{\tau}'(t) -t^{-1} L_{\tau} (t)$.
	Thus, by the Fubini's theorem, we have
	\begin{equation*}
		F(t) = 1 - \int_0^t \E [(Z_u + u Z_u') f(u)] du.
	\end{equation*}
	Hence, $F$ is absolutely continuous with respect to the Lebesgue measure, with a.e. derivative in the sense of Lebesgue given by $F'(t) = - \E [(Z_t + tZ_t') f]$.
	Moreover, the function $\Lambda = \log F$ has the a.e.\ derivative $F'/F$. 
	As in the proof of Theorem~\ref{thm:right}, application of Proposition~4.1 from~\cite{ledoux1996talagrands} yields
	\begin{align}
	\E [ f \log f ] - \E [ f ] \log \E [ f ] &\leq \sum_{j=1}^{\ell}  \E [ g_{j} \log( g_{j}/\E^{j}[g_{j}])] \notag \\ &\hspace{5em}+ \sum_{j=1}^{\ell} \E [ (f- g_{j} ) \log( f/f_{j}) ]\label{eq:left:1}
	\end{align}
	for any positive integrable random variables $g_{j}$ such that $\E [ g_{j} \log [g_{j}]] < \infty$.
	On the other hand,
	\begin{equation}\label{eq:left:2}
	\E [ f \log f] - \E [f] \log \E [f] = \E [ t^2 Z_t' f] + t F'(t) - F(t) \log F(t) \quad \text{a.e.}
	\end{equation}
	Combining~\eqref{eq:left:1} and~\eqref{eq:left:2} yields
	\begin{align*}
		tF'(t) - F(t) \log F(t) &\leq - \E [t^2Z_t'f] + \sum_{j=1}^\ell \E [g_{j} \log (g_{j}/\E^{j}[g_{j}])]\\
		&\hspace{+1em}+ \sum_{j=1}^\ell \E [(g_{j} - f) \log (f_{j}/f)].
	\end{align*}
	We now specialize this estimate with the choice
	\begin{equation*}
		g_{j} = \sum_{i=1}^{m} \P^{j} (\tau = i) \exp (-tI(s_i) - L_i(t)),
	\end{equation*}
	which coincides with the definition of $g_{j}$ in Lemma~\ref{lem:phi}.
	Applying Lemma~\ref{lem:phi} and algebraic transformations yields
	\begin{align*}
		(1-\phi_\ell(t)) (tF'(t) - F \log F) &\leq \phi_\ell(t) \cdot \E [t^2 Z_t' f-f \log f]\\
		&\hspace{-1em}- \E [t^2 Z_t' f_t] + \sum_{j=1}^{\ell} \E [g_{j} \log (g_{j}/ \E^{j}[g_{j}] )] + C \ell^{-1/2},
	\end{align*}
	where $\phi_\ell$ is defined in Lemma~\ref{lem:phi}.
	Using the identity $\E[t^2 Z_t' f - f \log f] = -t F'$, we obtain
	\begin{equation}\label{eq:left:3}
	t F' - (1-\phi_\ell(t)) F \log F \leq  - \E [t^2 Z_t' f] + \sum_{j=1}^{\ell} \E [g_{j} \log (g_{j}/\E^{j}[g_{j}])] + C\ell^{-1/2}.
	\end{equation}
	Define $\omega_{j} = g_{j}/\E^{j}[g_{j}]$. Then,
	$\E^{j} [g_{j} \log(g_{j}/ \E^{j}[g_{j}] )] = \E^{j} [g_{j}] \cdot \E^{j}[\omega_{j} \log \omega_{j}]$.
	Using the convexity of $x \mapsto x \log x$, we conclude that
	\begin{equation*}
		\E^{j}[g_{j}] \omega_{j} \log \omega_{j} \leq \sum_{i=1}^{m} \P^{j}(\tau = i)(-t I^{j}(s_i) - \ell_{ji}(t)) \exp(-t I(s_i) - L_i(t)),
	\end{equation*}
	and by appling the $\E^{j}$ operator on both sides we obtain
	\begin{align*}
		\E^{j}[g_{j} \log (g_{j}/\E^{j}[g_{j}])] &\\
		&\hspace{-3em}\leq \sum_{i=1}^{m} \P^{j}(\tau=i) \exp(-t I^{\circ j}(s_i) - L_i(t) + \ell_{ji}(t) ) (t \ell_{ji}'(t) - \ell_{ji}(t))\\
		&\hspace{-3em}= \E^{j} \left[ \sum_{i=1}^{m} \1_{\{\tau = i\}} \exp(-t I^{\circ j}- L_i + \ell_{ji})(t\ell_{ji}'(t) - \ell_{ji}) \right]. 
	\end{align*}
	Thus, by taking expectations
	$$\E [g_{j} \log(g_{j}/\E^{j}[g_{j}])] \leq \E [ f\exp(t I^{j}(s_\tau) + \ell_{j\tau}) (t \ell_{j\tau}' - \ell_{j\tau}) ].$$
	By H\"older's inequality and Lemma~\ref{lem:aux:bounds:left}, we have $\E [g_{j} \log(g_{j}/\E^{j}[g_{j}]) \1_{\Omega_{j}^\complement}] \leq C \ell^{-3/2}$.
	In order to bound $\E [g_{j} \log(g_{j}/\E^{j}[g_{j}]) \1_{\Omega_{j}}]$, first note that the convexity of the functions together with the fact that $\ell_{ji}(0)=0$ implies $t \ell_{j\tau}' - \ell_{j\tau} \geq 0$.
	Thus, we can use Lemma~\ref{lem:psi} in order to obtain
	\begin{equation*}
		 \E [g_{j} \log(g_{j}/\E^{j}[g_{j}]) \1_{\Omega_{j}}] \leq \widetilde \psi_\ell(t) \cdot \E [ (t \ell_{j\tau}'(t) - \ell_{j\tau}(t)) f] + C\ell^{-1/2}.
	\end{equation*}
	By the identity $t^2 Z_t' =t L_\tau' - L_ \tau$, we get
	\begin{equation}\label{eq:left:4}
	- \E [t^2 Z_t' f] + \sum_{j=1}^\ell \E [g_{j} \log(g_{j}/\E^{j}[g_{j}])] \leq (\widetilde \psi_\ell(t) -1) \E [(t L_\tau' - L_\tau)f].
	\end{equation}
	In order to bound the expectation on the right-hand side of the last estimate, let us first note that $tL_\tau' - L_\tau \leq \sup_i (tL_i' - L_i)$.
	In order to bound $\sup_i (tL_i' - L_i)$, introduce (for fixed $i \in \{1,\ldots,m\}$) the event $\widetilde \Omega_{j}$ defined via
	\begin{equation*}
		\widetilde \Omega_{j} = \{ I^{j}(s_i) \geq -(1 + \Delta) \}.
	\end{equation*}
	Thanks to the boundedness of the functions $s \in \Sc$, we have $\Omega_{j} \subseteq \widetilde \Omega_{j}$, hence $\widetilde \Omega_{j}^\complement \subseteq \Omega_{j}^\complement$.
	Setting $Y_{j} = - I^{j}(s_i)$, we obtain
	\begin{align}\label{eq:ell}
			t \ell_{ji}'(t) - \ell_{ji}(t) &\leq t  \E [\exp(t Y_{j}) Y_{j}] - \E [e^{tY_{j}}] \log \E [e^{tY_{j}}]\notag\\
			&\hspace{-6em}\leq t  \E [\exp(t Y_{j}) Y_{j} \1_{\widetilde \Omega_{j}^\complement}] + t  \E [\exp(t Y_{j}) Y_{j} \1_{\widetilde \Omega_{j}}] - \E [ e^{tY_{j} \1_{\widetilde \Omega_{j}}} ] \log \E [ e^{tY_{j} \1_{\widetilde \Omega_{j}}} ].
	\end{align}
	The first term on the right-hand side of~\eqref{eq:ell} is bounded using Lemma~\ref{lem:aux:bounds:left}:
	\begin{equation*}
		t\E [\exp(t Y_{j}) Y_{j} \1_{\widetilde \Omega_{j}^\complement}] \leq C \P (\widetilde \Omega_{j}^\complement)^{3/4} \leq C  \P (\Omega_{j}^\complement)^{3/4} \leq C  \ell^{-3/2}.
	\end{equation*}
	The second and third term on the right-hand side of~\eqref{eq:ell} are bounded using Lemma~\ref{lem:Y} which yields
	\begin{align*}
		\E [t \exp(t Y_{j}) Y_{j} \1_{\widetilde \Omega_{j}}] - \E [ e^{tY_{j} \1_{\widetilde \Omega_{j}}} ] \log \E [ e^{tY_{j} \1_{\widetilde \Omega_{j}}} ] &\\&\hspace{-12em}= \E [t \exp(t Y_{j} \1_{\widetilde \Omega_{j}}) Y_{j} \1_{\widetilde \Omega_{j}}] - \E [ e^{tY_{j} \1_{\widetilde \Omega_{j}}} ] \log \E [ e^{tY_{j} \1_{\widetilde \Omega_{j}}} ]\\
		&\hspace{-12em}\leq \frac{\E [Y_{j}^2]}{\left( 1 + \Delta \right)^2} ( 1+ ((1+\Delta)t - 1) e^{(1+\Delta)t} ).
	\end{align*}
	Hence summing over all $j$ in~\eqref{eq:ell} yields
	\begin{equation*}
		tL_i' - L_i \leq C \ell^{-1/2} + \frac{V}{\left(1+ \Delta \right)^2} ( 1+ ((1+ \Delta)t - 1) e^{(1+\Delta)t} ),
	\end{equation*}
	and this estimate holds for all $i \in \{1, \ldots, m\}$.
	Putting the obtained estimates into~\eqref{eq:left:3} and~\eqref{eq:left:4} and letting $\ell$ tend to $\infty$, we obtain
	\begin{equation*}
		t F' - (1-\phi) F \log F \leq (\psi(t)-1 ) F V ( 1 + (t-1) e^{t} ),
	\end{equation*}
	where $\psi(t) = \frac{1}{2} (1+e^{2t})$ and $\phi = \psi \log \psi$.
	Division by $F$ yields
	\begin{equation*}
		t \Lc' - (1-\phi)\Lc \leq \frac{V}{2} (e^{2t} - 1)  ( 1 + (t-1) e^{t}).
	\end{equation*}
	This differential inequality for $\Lc$ coincides with equation (4.21) in~\cite{klein2005concentration} and the rest of the proof follows along the lines of the one given in that paper (Lemma~4.1 in~\cite{klein2005concentration} which is used for the proof translates without changes in the proof to our framework, whereas the purely analytical Lemmata~4.5 and~4.6 in~\cite{klein2005concentration} can be borrowed unchanged).
\qed
\section{A useful consequence of Corollary~\ref{COR:RIGHT}}\label{s:integrated}

In this subsection, we state and prove another concentration inequality which is the essential ingredient to prove the auxiliary Lemma~\ref{l:ex:conc} in Appendix~\ref{s:app:adap}.
As will become clear from the proof, it can be regarded as an integrated version of statement~\ref{cor:right:c} from Corollary~\ref{COR:RIGHT}.

\begin{prop}\label{prop:conc}
	Let $N_1,\ldots,N_n$ be independent PPPs on some Polish space $\X$ with finite intensity measures $\Lambda_1,\ldots,\Lambda_n$.
	Set
	\begin{equation*}
		\nu_n(r)=\frac{1}{n} \sum_{k=1}^{n} \left\lbrace \int_{\X} r(x)\dd N_k(x) - \int_\X r(x)\dd \Lambda_k(x) \right\rbrace
	\end{equation*}
	for $r$ contained in a countable class $\Rc$ of real-valued measurable functions.
	
	\noindent Then, for any $\epsilon > 0$, there exist constants $c_1,c_2=1/6, c_3$ such that
	\begin{align*}
		\E \left[ \left( \sup_{r \in \Rc} \vert \nu_n(r) \vert^2 - c(\epsilon)H^2 \right)_+  \right]&\\
		&\hspace{-8em}\leq c_1 \left\lbrace \frac{\upsilon}{n} \exp\left( -c_2\epsilon \frac{nH^2}{\upsilon}\right) + \frac{M_1^2}{C^2(\epsilon) n^2} \exp\left(-c_3 C(\epsilon) \sqrt{\epsilon} \frac{nH}{M_1} \right) \right\rbrace 
	\end{align*}
	where $C(\epsilon)=(\sqrt{1+\epsilon} -1) \wedge 1$, $c(\epsilon)=2(1+2\epsilon)$ and $M_1$, $H$ and $\upsilon$ are such that
	\begin{equation*}
		\sup_{r \in \Rc} \norm{r}_\infty \leq M_1, \, \E \left[ \sup_{r \in \Rc} \vert \nu_n(r) \vert \right] \leq H,  \text{ and } \sup_{r \in \Rc}  \var \left( \int_\X r(x)dN_k(x) \right) \leq \upsilon \, \forall k.
	\end{equation*}
\end{prop}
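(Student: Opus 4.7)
The plan is to derive an integrated Bernstein-style moment bound starting from Corollary~\ref{COR:RIGHT}(c), via the tail formula $\E[(X-a)_+] = \int_0^\infty \P(X > a+u)\, du$. Write $W := \sup_{r \in \Rc} |\nu_n(r)|$. First I would rescale and symmetrize: applying the corollary to the class $\{r/M_1 : r \in \Rc \cup (-\Rc)\} \subset [-1,1]$ (taken identically across the $n$ independent PPPs) makes $Z := (n/M_1)\,W$ play the role of the supremum, with $V_n \leq n\upsilon/M_1^2$ and $\E Z \leq nH/M_1$. After substituting $x = ny/M_1$ and tidying, Corollary~\ref{COR:RIGHT}(c) yields
\begin{equation*}
\P(W \geq H + y) \leq \exp\!\left(-\frac{ny^2}{2\upsilon + 4 M_1 H + 3 M_1 y}\right), \qquad y \geq 0.
\end{equation*}

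Next I would split this denominator three ways using $(A+B+C)^{-1} \geq \min\{(3A)^{-1},(3B)^{-1},(3C)^{-1}\}$ with $A = 2\upsilon$, $B = 4 M_1 H$, $C = 3 M_1 y$, producing
\begin{equation*}
\P(W \geq H + y) \leq \exp\!\left(-\tfrac{ny^2}{6\upsilon}\right) + \exp\!\left(-\tfrac{ny^2}{12 M_1 H}\right) + \exp\!\left(-\tfrac{ny}{9 M_1}\right).
\end{equation*}
The change of variable $t = \sqrt{c(\epsilon) H^2 + u}$ followed by $t = H + y$ rewrites the target as
\begin{equation*}
\E\bigl[\bigl(W^2 - c(\epsilon) H^2\bigr)_+\bigr] = \int_{y_0}^\infty 2(H + y)\,\P(W > H + y)\, dy, \quad y_0 := (\sqrt{c(\epsilon)}-1) H,
\end{equation*}
into which I would plug the three-piece bound and evaluate the resulting elementary Gaussian and exponential integrals via $\int_{y_0}^\infty y e^{-\alpha y^2} dy = e^{-\alpha y_0^2}/(2\alpha)$, $\int_{y_0}^\infty e^{-\alpha y^2} dy \leq e^{-\alpha y_0^2}/(2\alpha y_0)$, and their exponential analogues.

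The stated constants then emerge as follows. A short calculation with $c(\epsilon) = 2(1+2\epsilon)$ gives $(\sqrt{c(\epsilon)}-1)^2 \geq \epsilon$ for every $\epsilon \geq 0$, which turns the exponent of the first subgaussian piece into $-\epsilon\, nH^2/(6\upsilon)$ and produces the first term of the bound with $c_2 = 1/6$. The remaining subgaussian piece and the subexponential piece both feed into the second term; to convert the parasitic prefactor $M_1 H/n$ produced by the $2H$ part of the integrand into the desired $M_1^2/n^2$, I would use the schoolbook estimate $x e^{-cx} \leq (2/(ce))\, e^{-cx/2}$ with $x = nH/M_1$. Comparing the resulting $(\sqrt{c(\epsilon)}-1)$-dependent coefficients with $C(\epsilon) = (\sqrt{1+\epsilon}-1) \wedge 1$, and using the easy inequalities $(\sqrt{c(\epsilon)}-1)^2 \gtrsim C(\epsilon) \sqrt{\epsilon}$ and $\sqrt{c(\epsilon)}-1 \gtrsim C(\epsilon) \sqrt{\epsilon}$ (trivial for large $\epsilon$ and a positive constant bound for small $\epsilon$), yields the announced exponent $-c_3 C(\epsilon) \sqrt{\epsilon}\, nH/M_1$ and the prefactor $M_1^2/(C^2(\epsilon) n^2)$.

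The main obstacle is not any individual estimate but the book-keeping of the many $\epsilon$-dependent prefactors and exponents generated by the three-way Bernstein split: the conversion $M_1 H/n \rightsquigarrow M_1^2/(C^2(\epsilon) n^2)$ in the second subgaussian piece forces one to sacrifice half the exponent in order to gain the factor $1/C^2(\epsilon)$ in the prefactor, and checking that the surviving exponent is still of the form $c_3 C(\epsilon)\sqrt{\epsilon}\, nH/M_1$ is precisely what fixes the definition of $C(\epsilon)$.
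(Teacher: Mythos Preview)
Your approach is correct and reaches the stated bound, but it differs from the paper's route at one key juncture. After obtaining the Bernstein tail
\[
\P(W \geq H + y) \leq \exp\!\left(-\frac{ny^2}{2\upsilon + 4 M_1 H + 3 M_1 y}\right),
\]
you split the denominator into three pieces and integrate each separately, which forces you to carry an additional ``mixed'' subgaussian term $\exp(-ny^2/(12M_1H))$ and then convert its $M_1H/n$ prefactor into $M_1^2/n^2$ via the $xe^{-cx}$ trick. The paper instead invokes the Birg\'e--Massart device from~\cite{birge1998minimum}: writing $y = \mu + \eta H$ with $\eta$ chosen appropriately, the cross term $4M_1H$ is absorbed directly into the subexponential part, and one obtains a clean two-piece bound
\[
\P\!\bigl(W \geq \mu + (\eta+1)H\bigr) \leq 2\exp\!\left(-\frac{n}{3}\left[\frac{\mu^2}{2\upsilon}\wedge \frac{2(\eta\wedge 1)}{7}\frac{\mu}{M_1}\right]\right),
\]
after which the integration (deferred to~\cite{chagny2013estimation}) produces the two stated terms without any prefactor conversion. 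Your three-way split is more elementary and self-contained, at the price of heavier bookkeeping; the paper's two-way split is the standard move in this literature and explains more transparently why exactly two terms appear and why $C(\epsilon)$ takes the form $(\sqrt{1+\epsilon}-1)\wedge 1$ (it is the choice of $\eta$ in the Birg\'e--Massart decomposition). A minor stylistic difference: you symmetrize by enlarging the class to $\Rc\cup(-\Rc)$ before applying Corollary~\ref{COR:RIGHT}, whereas the paper applies the corollary to $\Rc$ and $-\Rc$ separately and adds; both are equivalent up to a factor~2.
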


\begin{rem}
	Analogues of Proposition~\ref{prop:conc} have been used in the context of adaptive nonparametric estimation at various places, see, for instance,~\cite{comte2006penalized},~\cite{lacour2008adaptive} and~\cite{johannes2013adaptive}.
	The proof follows along the lines of the proof given in~\cite{chagny2013estimation} to a great extent and is thus only sketched.
\end{rem}

\begin{proof}[Proof of Proposition~\ref{prop:conc}]
	For $r \in \Rc$ and $k \in \{1,\ldots,n\}$ define functions $s_r^k: \X \to \R$ via
	\begin{equation*}
		s_r^k(x) = \frac{r(x)}{M_1}. 
	\end{equation*}
	Hence, for all $r \in \Rc$ and and $k \in \{ 1,\ldots, n \}$, we have $\vert s_r^k(x)\vert \leq 1$ and we can apply statement~\ref{cor:right:c} of Corollary~\ref{COR:RIGHT} for $\Sc = \{ ( s_r^1, \ldots, s_r^n) : r \in \Rc \}$ (the quantity $Z$ then corresponds to $\frac{n}{M_1} \sup_{r \in \Rc} \nu_n(r)$).
	Application of Theorem~\ref{COR:RIGHT} \ref{cor:right:c} yields for any $x > 0$ that
	\begin{equation*}
		\P \left( \frac{n}{M_1} \sup_{r \in \Rc}  \nu_n(r) \geq \frac{n}{M_1} \E \left[ \sup_{r \in \Rc} \nu_n(r) \right] + x \right) \leq \exp \left( - \frac{x^2}{2\upsilon + 3x} \right)
	\end{equation*}
	with $\upsilon = 2 \E Z + V_n$ where $V_n=\sup_{r \in \Rc} \var \left( S_n(s_r) \right)$ and $S_n$ is defined as in the statement of Corollary~\ref{COR:RIGHT}.
	Specializing with $x=ny/M_1$, we have
	\begin{align*}
		\P ( \sup_{r \in \Rc} \nu_n(r) \geq H + y ) &\leq \P ( \sup_{r \in \Rc} \nu_n(r) \geq \E [ \sup_{r \in \Rc} \nu_n(r) ] + y )\\
		&\leq \exp \left( - \frac{n^2 y^2}{2M_1^2 \upsilon + 3 M_1n y} \right).  
	\end{align*}
	Note that on the one hand we have $\E Z \leq nH/M_1$, and on the other hand $V_n \leq \frac{n\upsilon}{M_1^2}$,
	which in combination imply $\upsilon \leq 2nH/M_1 + n\upsilon/M_1^2$.
	We have
	\begin{equation*}
		\P ( \sup_{r \in \Rc} \nu_n(r) \geq H + y ) \leq \exp\left(- \frac{ny^2}{2(2M_1H+\upsilon) + 3 M_1 y} \right)  
	\end{equation*}	 
	which is used to obtain
	\begin{align*}
		\P ( \sup_{r \in \Rc} \vert  \nu_n(r)\vert  \geq H + y ) &\leq \P ( \sup_{r \in \Rc} \nu_n(r) \geq H + y ) + \P ( \sup_{r \in \Rc} - \nu_n(r) \geq H + y )\\
		&= \P ( \sup_{r \in \Rc} \nu_n(r) \geq H + y ) + \P ( \sup_{r \in \Rc} \nu_n(-r) \geq H + y )\\
		&\leq 2 \exp\left(- \frac{ny^2}{2(2M_1H+\upsilon) + 3 M_1 y} \right).
	\end{align*}
	Below, we will apply this estimate for $y = \mu + \eta H$. With this choice of $y$, following an argument from~\cite{birge1998minimum}, one can obtain the estimate
	\begin{equation*}
		\frac{y^2}{2(2M_1H+\upsilon) + 3 M_1 y} \geq \frac{1}{3} \left[ \frac{\mu^2}{2\upsilon} \wedge \frac{2(\eta \wedge 1)}{7} \frac{\mu}{M_1} \right],
	\end{equation*}
	which in turn implies that
	\begin{equation*}\label{eq:prob:exp}
	\P ( \sup_{r \in \Rc} \vert \nu_n(r)\vert  \geq \mu + (\eta+1) H ) \leq 2 \exp \left( -\frac{n}{3} \left\lbrace \frac{\mu^2}{2\upsilon} \wedge \frac{2(\eta \wedge 1)}{7} \frac{\mu}{M_1} \right\rbrace \right). 
	\end{equation*}
	The proof of the claim assertion follows now from
	\begin{align*}
		\E \left[ \left( \sup_{r \in \Rc} \vert \nu_n(r) \vert^2 - 2(1+2\epsilon)H^2 \right)_+ \right]\\ 
		&\hspace{-4em}= \int_0^\infty \P \left( \sup_{r \in \Rc} \abs{\nu_n(r)}^2 \geq 2(1+2\epsilon)H^2 +t \right)\dd t,
	\end{align*}
	and computation of the integral on the right-hand side (see~\cite{chagny2013estimation} for details).
\end{proof}

\section{Proof of Theorem~\ref{thm:lower}}\label{APP:LOWER}

	Let us define $\zeta=\min \{ 1/(\Gamma \eta), 16\delta/L \}$ with $\delta = 1/2 - 1/(2\sqrt 2)$ and for each $\theta=(\theta_j)_{0 \leq \abs{j} \leq k_n^*}\in \{\pm 1\}^{2k_n^* + 1}$ the function $\lambda_\theta$ through
	\begin{align*}
	\lambda_\theta &= \frac{L}{2} +  \theta_0 \left(\frac{L^2\zeta}{16n}\right)^{1/2}  + \left(\frac{L^2\zeta}{16n}\right)^{1/2} \sum_{1 \leq \abs{j} \leq k_n^*} \theta_j \phi_j\\
	&= \frac{L}{2} + \left(\frac{L^2\zeta}{16n}\right)^{1/2} \sum_{0 \leq \abs{j} \leq k_n^*} \theta_j \phi_j.
	\end{align*}
	\noindent Then, the calculation
	\begin{align*}
	\norm{\left(\frac{L^2\zeta}{16n}\right)^{1/2} \sum_{0 \leq \abs{j} \leq k_n^*} \theta_j \phi_j}_\infty &\leq \bigg(\frac{L^2 \zeta}{16n}\bigg)^{1/2} \sum_{0 \leq \abs{j} \leq k_n^*} \sqrt{2}\\
	&\leq \bigg(\frac{L^2\zeta}{8}\bigg)^{1/2} \bigg(\sum_{0 \leq \abs{j} \leq k_n^*} \gamma_j^{-2} \bigg)^{1/2} \bigg(\sum_{0 \leq \abs{j} \leq k_n^*} \frac{\gamma_j^2}{n} \bigg)^{1/2}\\
	&\leq \bigg(\frac{L^2\zeta \Gamma}{8} \bigg)^{1/2} \left( \gamma_{\knast}^2 \cdot \frac{2\knast +1 }{n} \right)^{1/2}\\
	&\leq \bigg(\frac{L^2\zeta \eta \Gamma}{8} \bigg)^{1/2} \leq L/\sqrt 8
	\end{align*}
	shows that $\lambda_\theta \geq L \delta$. In particular, $\lambda_\theta$ is non-negative for all $\theta \in \{ \pm 1 \}^{2\knast+1}$.
	Moreover $\norm{\lambda_\theta}_\gamma^2 \leq L^2$ holds for each $\theta \in \{\pm 1 \}^{2\knast+1}$ due to the estimate
	\begin{align*}
	\norm{\lambda_\theta}_\gamma^2 &= \left[ \left(\frac{L^2}{4}\right)^{1/2}  + \theta_0 \left( \frac{L^2\zeta}{16n} \right)^{1/2}  \right]^2  + \frac{L^2\zeta}{16} \, \sum_{1 \leq \abs{j} \leq k_n^*} \frac{\gamma_j^2}{n}\\
	&\leq \frac{L^2}{2} + \left( \frac{L^2\zeta}{8n} \right) + \frac{L^2\zeta}{16} \cdot \gamma_{k_n^*}^2 \sum_{1 \leq \abs{j} \leq k_n^*} \frac{1}{n}
	\\
	&\leq \frac{L^2}{2} + \frac{L^2\zeta}{8} \cdot \gamma_{\knast}^2 \cdot \frac{2\knast +1}{n} \leq L^2.
	\end{align*}
	This estimate and the non-negativity of $\lambda_\theta$ together imply $\lambda_\theta \in \Lambda$ for all $\theta \in \{ \pm 1\}^{2k_n^* + 1}$.
	Let $\P_\theta$ denote the joint distribution of the i.i.d. sample $N_1,\ldots,N_n$ when the true parameter is $\lambda_\theta$.
	Let $\P_\theta^{N_i}$ denote the corresponding one-dimensional marginal distributions and $\E_\theta$ the expectation with respect to $\P_\theta$. 
	From now on, let $\widetilde \lambda$ be an arbitrary estimator of $\lambda$.
	We denote by $\betatilde_j$ and $\betatheta_j$ the Fourier coefficients of $\lambdatilde$ and $\lambda^\theta$, respectively, where $\lambdatilde$ is an arbitrary but fixed estimator of $\lambda$.
	The key argument of the proof is the reduction scheme
	\begin{align}
	\sup_{\lambda \in \Lambda} \E [ \Vert \widetilde \lambda - \lambda \Vert^2] &\geq \sup_{\theta \in \{\pm 1\}^{2k_n^*+1}} \E_\theta [\Vert \widetilde \lambda - \lambda_\theta \Vert^2]\notag\\
	&\hspace{-6em}\geq \frac{1}{2^{2k_n^*+1}} \sum \limits_{\theta \in \{\pm 1\}^{2k_n^*+1}} \E_\theta [\Vert \widetilde \lambda - \lambda_\theta \Vert^2]\notag\\
	&\hspace{-6em}= \frac{1}{2^{2k_n^*+1}} \sum \limits_{\theta \in \{\pm 1\}^{2k_n^*+1}} \sum_{0 \leq \abs{j} \leq k_n^*} \ \E_\theta [(\betatilde_j - \betatheta_j)^2]\notag\\
	&\hspace{-6em}=\frac{1}{2^{2k_n^*+1}} \sum_{\theta \in \{\pm 1\}^{2k_n^*+1}} \sum_{0 \leq \abs{j} \leq k_n^*} \frac{\E_\theta [(\betatilde_j - \betatheta_j)^2] + \E_{\theta^{(j)}} [(\betatilde_j - \beta^{\theta^{(j)}}_j)^2]}{2},\label{eq:rs:l:n}
	\end{align}
	where for $\theta \in \{\pm 1\}^{2k_n^*+1}$ the element $\theta^{(j)} \in \{\pm 1\}^{2k_n^*+1}$ is defined by $\theta^{(j)}_k = \theta_k$ for $k \neq j$ and $\theta^{(j)}_j = -\theta_j$.
	Consider the Hellinger affinity defined as $\rho(\P_\theta, \P_{\theta^{(j)}}) = \int \sqrt{d\P_{\theta} d\P_{\theta^{(j)}}}$.
	For an arbitrary estimator $\widetilde \lambda$ of $\lambda$ we have
	\begin{align*}
	\rho(\P_\theta, \P_{\theta^{(j)}}) &\leq \int \frac{\vert \betatilde_j - \betatheta_j \vert}{\vert \betatheta_j - \beta^{\theta^{(j)}}_j \vert}\sqrt{d\P_{\theta} d\P_{\theta^{(j)}}} + \int \frac{\vert \betatilde_j - \beta^{\theta^{(j)}}_j \vert}{\vert \betatheta_j - \beta^{\theta^{(j)}}_j \vert} \sqrt{d\P_{\theta} d\P_{\theta^{(j)}}}\\
	& \leq \bigg(\int \frac{(\betatilde_j - \betatheta_j)^2}{(\betatheta_j - \beta^{\theta^{(j)}}_j)^2} d\P_\theta \bigg)^{1/2} + \bigg(\int \frac{(\betatilde_j - \beta^{\theta^{(j)}}_j)^2}{(\betatheta - \beta^{\theta^{(j)}}_j)^2} d\P_{\theta^{(j)}} \bigg)^{1/2},
	\end{align*}
	from which we conclude by means of the elementary inequality $(a+b)^2 \leq 2a^2 + 2b^2$ that
	\begin{equation*}
	\frac 1 2 ( \betatheta_j - \beta^{\theta^{(j)}}_j )^2 \rho^2(\P_\theta, \P_{\theta^{(j)}}) \leq \E_\theta [(\betatilde_j - \betatheta_j)^2] + \E_{\theta^{(j)}} [( \betatilde_j - \beta^{\theta^{(j)}}_j)^2].
	\end{equation*}
	Recall that the Hellinger distance between two probability measures $\P$ and $\Q$ is defined as $H(\P,\Q)=(\int [\sqrt{d\P}-\sqrt{d\Q} ]^2 )^{1/2}$.
	Using~Theorem 3.2.1 from~\cite{reiss1993course}, we obtain
	\begin{align*}\label{eq:hell:n}
	H^2(\P_\theta^{N_i}, \P^{N_i}_{\theta^{(j)}}) &= \int (\sqrt{\lambda_\theta} - \sqrt{\lambda_{\theta^{(j)}}} )^2 = \int \frac{\vert \lambda_\theta - \lambda_{\theta^{(j)}} \vert^2}{(\sqrt{\lambda_\theta} + \sqrt{\lambda_{\theta^{(j)}}})^2}\\
	&\leq \frac{1}{4\delta L} \ \norm{\lambda_\theta-\lambda_{\theta^{(j)}}}^2_{2}
	= \frac{\zeta L}{16\delta n}\leq \frac{1}{n}.
	\end{align*}
	Consequently, with Lemma~3.3.10~(i) from~\cite{reiss1989approximate} it holds
	\begin{equation*}\label{eq:crux}
	H^2(\P_\theta, \P_{\theta^{(j)}}) \leq \sum_{i=1}^n H^2(\P_\theta^{N_i}, \P_{\theta^{(j)}}^{N_i}) \leq 1.
	\end{equation*}
	Thus, the relation $\rho(\P_\theta, \P_{\theta^{(j)}})=1-H^2(\P_\theta, \P_{\theta^{(j)}})/2$ implies $\rho(\P_\theta, \P_{\theta^{(j)}}) \geq {1/2}$.
	Finally, putting the obtained estimates into the reduction scheme~\eqref{eq:rs:l:n} implies
	\begin{align*}
	\sup_{\lambda \in \Lambda} \E [\Vert \widetilde \lambda - \lambda \Vert^2]&\\
	&\hspace{-5em}\geq \frac{1}{2^{2k_n^*+1}}  \sum_{\theta \in \{\pm 1\}^{2k_n^*+1}} \sum_{0 \leq \abs{j} \leq k_n^*} \frac{\E_\theta [(\betatilde_j - \betatheta_j)^2] + \E_{\theta^{(j)}} [( \betatilde_j - \beta_j^{\theta^{(j)}} )^2]}{2} \\
	&\hspace{-5em}\geq \sum_{0 \leq \abs{j} \leq k_n^*} \frac{1}{16} (\betatheta_j - \beta_j^{\theta^{(j)}} )^2
	= \frac{\zeta L^2}{64}\sum_{0 \leq \abs{j} \leq k_n^*} \frac{1}{n} \geq \frac{\zeta L^2}{64\eta} \cdot \Psin,
	\end{align*}
	which finishes the proof of the theorem since $\widetilde \lambda$ was arbitrary.
\qed

\section{Proof of Theorem~\ref{thm:adap:est}}\label{s:app:adap}

\subsection{Proof of Theorem~\ref{thm:adap:est}}
	Let us introduce the event
	\begin{equation*}
		\Xi = \{ (\beta_0 \vee 1)/2 \leq \betahat_0 \vee 1 \leq 2 (\beta_0 \vee 1) \},
	\end{equation*}
	the definition of which is used to obtain the decomposition
	\begin{equation*}
		\E [\Vert \widehat \lambda_{\knhat} - \lambda \Vert^2] \leq \underbrace{\E [  \Vert \widehat \lambda_{\knhat} - \lambda \Vert^2 \1_\Xi ]}_{\eqqcolon \square} + \underbrace{\E [  \Vert \widehat \lambda_{\knhat} - \lambda \Vert^2 \1_{\Xi^\complement}]}_{\eqqcolon \blacksquare}.
	\end{equation*}
	We establish uniform upper bounds for both terms separately.
	
	\noindent\emph{Uniform upper bound for $\square$}:
	Since the equation $\Upsilon_n(t) = \Vert \widehat \lambda_n-t \Vert^2 - \Vert \widehat \lambda_n \Vert^2$ holds for all $t \in \L^2$, we obtain that
	$
	\argmin_{t \in \Sc_k} \Upsilon_n(t) = \widehat \lambda_k
	$ for all $k \in \{0,\ldots,n\}$ where $\Sc_k$ denotes the linear subspace of $\L^2$ spanned by the $\phi_j$ with $\vert j \vert \leq k$.
	This identity combined with the definition of $\knhat$ yields for all $k \in \{0, \ldots, n\}$ the inequality chain
	\begin{equation*}
		\Upsilon_n(\widehat \lambda_{\knhat}) + \pen_{\knhat} \leq \Upsilon_n(\widehat \lambda_k) + \pen_k \leq \Upsilon_n(\lambda_k) + \pen_k
	\end{equation*}
	where $\lambda_k = \sum_{0 \leq \vert j \vert \leq k} \beta_j \phi_j$ is the projection of $\lambda$ on the finite-dimensional space $\Sc_k$.
	Hence, using the definition of the contrast, we obtain
	\begin{equation*}
		\Vert \widehat \lambda_{\knhat} \Vert^2 \leq \Vert \lambda_k \Vert^2 + 2 \langle \widehat \lambda_n, \widehat \lambda_{\knhat} - \lambda_k \rangle + \pen_k - \pen_{\knhat}
	\end{equation*}
	for all $k \in \{0,\ldots,n\}$, from which we conclude by setting $\widehat \Theta_n = \widehat \lambda_n - \lambda_n$ that
	\begin{equation}\label{eq:1}
	\Vert \widehat \lambda_{\knhat} - \lambda \Vert^2 \leq \Vert \lambda - \lambda_k \Vert^2 + \pen_k - \pen_{\knhat} + 2 \langle \widehat \Theta_n, \widehat \lambda_{\knhat} - \lambda_{k} \rangle 
	\end{equation}
	for all $k \in \{0,\ldots,n\}$.
	Consider the set $\Bc_k = \{ \lambda \in \Sc_k : \norm{\lambda}^2 \leq 1 \}$.
	By means of the inequality $2uv \leq \tau u^2 + \tau^{-1}v^2$, we obtain for every $\tau > 0$ and $t \in \Sc_k$ that
	\begin{equation*}
		2 \vert  \langle h,t \rangle \vert \leq 2 \, \Vert t \Vert \sup_{t \in \Bc_k} \vert \langle h, t  \rangle \vert \leq \tau \norm{t}^2 + \tau^{-1} \sup_{t \in \Bc_k} \vert\langle h,t \rangle\vert^2.
	\end{equation*}
	Combining this estimate with the estimate~\eqref{eq:1}, we obtain (note that $\widehat \lambda_{\knhat}- \lambda_k \in \Sc_{k \vee \knhat}$)
	\begin{equation*}
		\Vert \widehat \lambda_{\knhat} - \lambda \Vert^2 \leq \Vert \lambda- \lambda_k \Vert^2 + \pen_k - \pen_{\knhat} + \tau \Vert \widehat \lambda_{\knhat} - \lambda_k \Vert^2 + \tau^{-1} \sup_{t \in \Bc_{k \vee \knhat}} \vert  \langle \widehat \Theta_n, t  \rangle \vert^2.
	\end{equation*}
	We have $\Vert \widehat \lambda_{\knhat} - \lambda_k \Vert^2 \leq 2 \Vert \widehat \lambda_{\knhat} - \lambda \Vert^2 + 2 \Vert \lambda_k - \lambda \Vert^2$ and $\Vert \lambda - \lambda_k \Vert^2 \leq L^2\gamma_k^{-2}$ for all $\lambda \in \Lambda$ thanks to Assumption~\ref{ass:seq}.
	Hence, specializing with $\tau = 1/4$ implies
	\begin{equation*}
		\Vert \widehat \lambda_{\knhat} - \lambda \Vert^2 \leq 3 L^2\gamma_k^{-2} + 2\pen_k -2\pen_{\knhat} + 8  \sup_{t \in \Bc_{k \vee \knhat}} \vert \langle \widehat \Theta_n,t \rangle \vert^2,
	\end{equation*}
	which is used to obtain
	\begin{align*}
		\Vert \widehat \lambda_{\knhat} -\lambda \Vert^2 &\leq 3L^2\gamma_k^{-2} + 8 \left( \sup_{t \in \Bc_{k \vee \knhat}} \vert \langle \widehat \Theta_n,t \rangle \vert^2 - \frac{3(\beta_0 \vee 1) \cdot (2(k \vee \knhat)+1)}{n} \right)_+\\
		&\hspace{1em} + \frac{24(\beta_0 \vee 1)\cdot (2(k \vee \knhat)+1)}{n} + 2\pen_k - 2\pen_{\knhat}.
	\end{align*}
	Note that we have $2(k \vee \knhat) +1 \leq 2k + 2\knhat + 2$.
	Thus, due to the definition of both the penalty and $\Xi$ we obtain
	\begin{equation*}
		\begin{split}
		\Vert \widehat \lambda_{\knhat} -\lambda \Vert^2 \, \1_\Xi &\leq 3L^2 \gamma_k^{-2} + 120 (L \vee 1) \cdot \frac{2k+1}{n}\\
		&\hspace{1em} + 8 \left( \sup_{t \in \Bc_{k \vee \knhat}} \vert \langle \widehat \Theta_n, t \rangle \vert^2 - \frac{3(\beta_0 \vee 1) \cdot (2(k \vee \knhat) + 1)}{n} \right)_+.
	\end{split}
	\end{equation*}
	Since the last estimate holds for all $k \in \{ 0,\ldots,n \}$ and $\lambda \in \Lambda$, we obtain
	\begin{align}\label{eq:upper:square}
	\E [ \Vert \widehat \lambda_{\knhat} - \lambda \Vert^2 \, \1_\Xi ] &\leq (3L^2 + 120 (L \vee 1)) \min_{0 \leq k \leq n} \max \left\lbrace \frac{1}{\gamma_k^2},\frac{2k+1}{n} \right\rbrace \notag \\
	&\hspace{-1em}+ 8 \sum_{k=0}^{n} \E \left[ \left( \sup_{t \in \Bc_{k}} \vert\langle \widehat \Theta_n, t \rangle \vert^2 - \frac{3 (\beta_0 \vee 1) (2k+1)}{n} \right)_+\right].
	\end{align}
	We now apply Lemma~\ref{l:ex:conc} from Subsection~\ref{subs:adap:aux} which using $\lambda \in \Lambda$ yields that
	\begin{align*}
		\E \left[ \left( \sup_{t \in \Bc_{k}} \vert\langle \widehat \Theta_n, t \rangle \vert^2 - \frac{3(\beta_0 \vee 1)(2k+1)}{n} \right)_+\right]&\\
		&\hspace{-18em}\leq K_1 \left[  \frac{\sqrt{2k+1}(L \vee 1)L}{n} \exp \left( -K_2\sqrt{\frac{2k+1}{L^2}} \right) + \frac{2k+1}{n^2} \exp \left( -K_3 \sqrt{n} \right) \right],
	\end{align*}
	where $K_1$, $K_2$ and $K_3$ are numerical constants independent of $n$.
	The estimate $2k+1 \leq 3n$ for $k \leq n$ yields
	\begin{align*}
		\sum_{k=0}^{n} \left[ \left( \sup_{t \in \Bc_{k}} \vert\langle \widehat \Theta_n, t \rangle \vert^2 - \frac{3(\beta_0 \vee 1)(2k+1)}{n} \right)_+\right] &\\
		&\hspace{-11em} \lesssim  \sum_{k=0}^{\infty}\frac{\sqrt{2k+1}}{n} \exp \left( - K_2 \sqrt{\frac{2k+1}{L^2}} \right)
		+ \exp(-K_3 \sqrt{n}).
	\end{align*}
	Note that we have $\sum_{k=0}^{\infty} \sqrt{2k+1} \exp \left(  -K_2 \sqrt{2k+1}/L \right)  \leq C < \infty$ for some numerical constant $C$.
	Since all the computations up to now hold uniformly for all $\lambda \in \Lambda$, plugging the derived estimates into~\eqref{eq:upper:square}, we obtain 
	$$\sup_{\lambda \in \Lambda} \E [ \Vert \widehat \lambda_\knhat - \lambda \Vert^2 \1_{\Xi} ] \lesssim \min_{0 \leq k \leq n} \max \left\lbrace \frac{1}{\gamma_k^2}, \frac{2k+1}{n}\right\rbrace + n^{-1} + \exp(-K_3 \sqrt{n}).$$
	
	\noindent \emph{Uniform upper bound for $\blacksquare$}: In order to derive an upper bound for $\blacksquare$, first recall the definition $\lambda_k = \sum_{0 \leq \vert j \vert \leq k} \beta_j \phi_j$ from above.
	We obtain the identity
	\begin{equation}\label{eq:blacksquare:dec}
	\E [ \Vert \widehat \lambda_{\knhat} - \lambda \Vert^2 \1_{\Xi^\complement} ] = \E [ \Vert \widehat \lambda_{\knhat} - \lambda_{\knhat} \Vert^2 \1_{\Xi^\complement} ] + \E [ \Vert \lambda - \lambda_{\knhat} \Vert^2 \1_{\Xi^\complement} ].
	\end{equation}
	Since $\Vert \lambda - \lambda_{\knhat} \Vert^2 \leq \Vert \lambda \Vert^2 \leq L^2$ due to Assumption~\ref{ass:seq}, the second term on the right-hand side of~\eqref{eq:blacksquare:dec} satisfies
	\begin{equation}\label{eq:blacksquare:1}
	\E [ \Vert \lambda - \lambda_{\knhat} \Vert^2 \1_{\Xi^\complement} ] \leq L^2 \P (\Xi^\complement) \lesssim  n^{-1},
	\end{equation}
	where the probability estimate for $\Xi^\complement$ is proved below.
	In order to bound the first term on the right-hand side of~\eqref{eq:blacksquare:dec}, first note that
	\begin{align*}
		\E [ \Vert \widehat \lambda_{\knhat} - \lambda_{\knhat} \Vert^2 \1_{\Xi^\complement} ]& \leq \sum_{0 \leq \vert j\vert \leq n} \E [ ( \betahat_j- \beta_j )^2 \, \1_{\Xi^\complement} ]\\
		&\leq \P(\Xi^\complement)^{1/2} \sum_{0 \leq ( j\vert \leq n} \E [ ( \betahat_j- \beta_j )^4]^{1/2}.
	\end{align*}
    Therefrom, by applying Theorem~2.10 from~\cite{petrov1995limit} (with $p=4$ in the statement of this theorem), we conclude
	\begin{equation*}
		\E [ \Vert \widehat \lambda_{\knhat} - \lambda_{\knhat} \Vert^2 \1_{\Xi^\complement} ] \lesssim \P \left( \Xi^\complement \right)^{1/2},
	\end{equation*}
	and it remains to find a suitable bound for $\P(\Xi^\complement)$. We have
	\begin{equation*}
		\P (\Xi^\complement) = \P ( \betahat_0 \vee 1 < (\beta_0 \vee 1)/2) + \P ( \betahat_0 \vee 1 > 2(\beta_0 \vee 1)),
	\end{equation*}
	and the probabilities on the right-hand side can be bounded by Chernoff bounds for Poisson distributed random variables (see~\cite{mitzenmacher2005probability}, Theorem~5.4).
	More precisely, by considering different cases, one can obtain the uniform bounds
	\begin{align*}
		&\P ( \betahat_0 \vee 1 < (\beta_0 \vee 1)/2 ) \leq \exp ( - 2\omega_1(1/2) n)  \quad \text{and}\\
		&\P ( \betahat_0 \vee 1 > 2 (\beta_0) \vee 1) \leq  \exp (-\omega_2(1/2) n)
	\end{align*}
	with $\omega_1(\eta)= 1-\eta + \eta \log \eta >0$ and $\omega_2(\eta) = 1-\eta^{-1} - \eta^{-1} \log \eta> 0$. 
	Hence, putting together the estimates derived so far, we obtain
	\begin{equation}\label{eq:blacksquare:2}
	\E [ \Vert \widehat \lambda_{\knhat} - \lambda_{\knhat} \Vert^2 \1_{\Xi^\complement} ] \lesssim n^{-1}.
	\end{equation}
	Putting the estimates~\eqref{eq:blacksquare:1} and~\eqref{eq:blacksquare:2} into~\eqref{eq:blacksquare:dec} and noting that all the obtained estimates hold uniformly for $\lambda \in \Lambda$, we obtain that
	\begin{equation*}\label{eq:blacksquare:final}
		\sup_{\lambda \in \Lambda} \E [ \Vert \widehat \lambda_\knhat - \lambda \Vert^2 \1_{\Xi^\complement}] \lesssim n^{-1}.
	\end{equation*}
	Combining the derived bounds for $\square$ \ and $\blacksquare$ \ implies the statement of the theorem.
\qed

\subsection{Auxiliary results}\label{subs:adap:aux}

The following lemma is a version of Lemma A4 in~\cite{johannes2013adaptive} adapted to our framework.
In that paper, a circular deconvolution model was considered and the same way Lemma~A4 in~\cite{johannes2013adaptive} is obtained from a variant of Proposition~\ref{prop:conc} (cf. Lemma~A3 in~\cite{johannes2013adaptive} or Lemma~1 in~\cite{comte2006penalized}), the key ingredient for the proof of Lemma~\ref{l:ex:conc} is Proposition~\ref{prop:conc}.

\begin{lem}\label{l:ex:conc}
	For all $k \in \{0,\ldots,n\}$, we have
	\begin{align*}
		\E \left[ \left( \sup_{t \in \Bc_{k}} \vert  \langle \widehat \Theta_n, t  \rangle \vert^2 -  \frac{3(\beta_0 \vee 1) (2k+1) }{n} \right)_+\right] &\\ &\hspace{-14em} \leq K_1 \left\lbrace  \frac{\sqrt{2k+1} \cdot (\beta_0 \vee 1) \norm{\lambda}}{n} \exp \left( - K_2 \cdot \frac{\sqrt{2k+1}}{\norm{\lambda}} \right) \right.\\
		&\left. + \frac{2k+1}{n^2}  \exp \left( -K_3 \sqrt{n} \right) \right\rbrace,
	\end{align*}
	with numerical constants $K_1$, $K_2$, and $K_3$.
\end{lem}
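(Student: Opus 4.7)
My plan is to apply Proposition~\ref{prop:conc} to the empirical process $\nu_n(t)=\langle\widehat\Theta_n,t\rangle$ with $\Rc$ taken to be a countable dense subset of $\Bc_k$. Since $\Lambda_i=\Lambda$ for every $i$, we have $\nu_n(t)=\frac{1}{n}\sum_{i=1}^n\int_0^1 t(x)(dN_i(x)-d\Lambda(x))$, and by duality in the finite-dimensional space $\Sc_k$, $\sup_{t\in\Bc_k}\vert\nu_n(t)\vert^2=\sum_{0\le\vert j\vert\le k}(\widehat\beta_j-\beta_j)^2$, so the supremum is indeed realised on a countable set. The proof then reduces to identifying the three parameters $M_1$, $H$, $\upsilon$ that enter Proposition~\ref{prop:conc} and to choosing $\epsilon$ so that its centring $c(\epsilon)H^2$ does not exceed the target centring $3(\beta_0\vee 1)(2k+1)/n$.

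For any $t=\sum_{0\le\vert j\vert\le k}a_j\phi_j\in\Bc_k$, Cauchy-Schwarz together with $\Vert\phi_j\Vert_\infty\le\sqrt 2$ gives $\Vert t\Vert_\infty\le\sqrt{2(2k+1)}$, so I would take $M_1=\sqrt{2(2k+1)}$. For the variance, $\var(\int t\,dN_1)=\int t^2\lambda\,dx\le\Vert\lambda\Vert\,\Vert t^2\Vert\le\Vert t\Vert_\infty\,\Vert t\Vert\,\Vert\lambda\Vert\le\sqrt{2(2k+1)}\,\Vert\lambda\Vert$, hence $\upsilon=\sqrt{2(2k+1)}\,\Vert\lambda\Vert$ is admissible. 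For $H$, I would exploit the pointwise identity $\sum_{0\le\vert j\vert\le k}\phi_j^2\equiv 2k+1$ (which follows from $\phi_0^2\equiv 1$ and $\phi_j^2+\phi_{-j}^2\equiv 2$) to compute
\begin{equation*}
\E\bigl[\sup_{t\in\Bc_k}\vert\nu_n(t)\vert^2\bigr]=\sum_{0\le\vert j\vert\le k}\var(\widehat\beta_j)=\frac{1}{n}\int\!\Bigl(\sum_{0\le\vert j\vert\le k}\phi_j^2\Bigr)\lambda\,dx=\frac{(2k+1)\beta_0}{n},
\end{equation*}
so by Jensen's inequality $H=\sqrt{(2k+1)\beta_0/n}$ is admissible. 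Taking $\epsilon=1/4$, a short case check shows $c(\epsilon)H^2=2(1+2\epsilon)(2k+1)\beta_0/n\le 3(\beta_0\vee 1)(2k+1)/n$ whether $\beta_0\ge 1$ or $\beta_0<1$, and the monotonicity of $a\mapsto(x-a)_+$ lets me replace the target centring by the smaller $c(\epsilon)H^2$ at no cost before invoking Proposition~\ref{prop:conc}.

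The resulting bound is the sum of $c_1\upsilon/n\cdot\exp(-c_2\epsilon\,nH^2/\upsilon)$ and $c_1 M_1^2/(C(\epsilon)^2n^2)\cdot\exp(-c_3 C(\epsilon)\sqrt\epsilon\,nH/M_1)$. With the values above, the prefactors read $\sqrt{2k+1}\,\Vert\lambda\Vert/n$ and $(2k+1)/n^2$, matching the stated form once the outer factor $\beta_0\vee 1$ is inserted. The main obstacle I anticipate lies in the \emph{exponents}: my computations give $nH^2/\upsilon\asymp\sqrt{2k+1}\,\beta_0/\Vert\lambda\Vert$ and $nH/M_1\asymp\sqrt{n\beta_0}$, each of which carries a spurious factor $\beta_0$ absent from the stated $\exp(-K_2\sqrt{2k+1}/\Vert\lambda\Vert)$ and $\exp(-K_3\sqrt n)$. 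For $\beta_0\ge 1$ the discrepancy is harmless; the delicate regime is $\beta_0<1$, where I expect one must either invoke the alternative variance estimate $\upsilon\le 2(2k+1)\beta_0$ (from $\int t^2\lambda\le\Vert t\Vert_\infty^2\beta_0$) combined with a case split on the size of $\beta_0$, or else absorb the missing factor into the universal constants $K_2,K_3$ via the a priori bound $\beta_0\le\Vert\lambda\Vert$. Closing this gap is the only step of the argument that goes beyond a direct application of Proposition~\ref{prop:conc}.
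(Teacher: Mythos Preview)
Your overall plan coincides with the paper's: apply Proposition~\ref{prop:conc} with $\nu_n(t)=\langle\widehat\Theta_n,t\rangle$, determine $M_1,H,\upsilon$, and take $\epsilon=1/4$. The paper obtains the slightly sharper $M_1=\sqrt{2k+1}$ by invoking the exact identity $\sum_{0\le|j|\le k}\phi_j^2\equiv 2k+1$ (which you in fact also use for $H$), and it bounds $\upsilon$ via a Toeplitz/spectral argument rather than your H\"older estimate; these differences are cosmetic.

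The substantive point is the obstacle you correctly isolate, but neither of your two suggested remedies works. Taking $\upsilon\le 2(2k+1)\beta_0$ yields $nH^2/\upsilon\asymp 1$, so the first exponential no longer decays in $k$ and the subsequent sum over $k$ in the proof of Theorem~\ref{thm:adap:est} diverges. And the a~priori bound $\beta_0\le\Vert\lambda\Vert$ points the wrong way: to pass from $\exp(-c\sqrt{2k+1}\,\beta_0/\Vert\lambda\Vert)$ to $\exp(-K_2\sqrt{2k+1}/\Vert\lambda\Vert)$ you would need a \emph{lower} bound on $\beta_0$, which is unavailable.

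The paper's fix is a one-line trick you are missing: since $H$ and $\upsilon$ are only required to be \emph{upper} bounds, one may inflate them. Set
\[
H=\sqrt{(\beta_0\vee 1)(2k+1)/n}\quad\text{and}\quad \upsilon=\sqrt{2k+1}\,\Vert\lambda\Vert\,(\beta_0\vee 1).
\]
Both remain admissible because $\beta_0\vee 1\ge\beta_0$ and $\beta_0\vee 1\ge 1$. Now the factors cancel in the first exponent, $nH^2/\upsilon=\sqrt{2k+1}/\Vert\lambda\Vert$, and in the second one has $nH/M_1=\sqrt{n(\beta_0\vee 1)}\ge\sqrt n$. With this choice the centring satisfies $c(1/4)H^2=3(\beta_0\vee 1)(2k+1)/n$ exactly, and the prefactors $\upsilon/n$ and $M_1^2/n^2$ match the claimed form.
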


\begin{proof}
	 For $t \in \Sc_k$, we write $t = \sum_{j=-k}^{k} \tau_j \phi_j$.
	 Then, it is readily verified that $\langle \widehat \Theta_n, t \rangle = \frac 1 n \sum_{i=1}^n \{\int_0^1 t(x)\dd N_i(x) - \int_0^1 t(x) \lambda(x)\dd x$\}.
	 Hence, it remains to find constants $M_1$, $H$ and $\upsilon$ satisfying the preconditions of Proposition~\ref{prop:conc}.
	 
	 \noindent\emph{Condition concerning $M_1$:} We have
	 \begin{align*}
	 	\sup_{t \in \Bc_k} \norm{t}_\infty^2 = \sup_{t \in \Bc_k} \sup_{y \in [0,1)} \vert t(y) \vert^2 &\leq \sup_{t \in \Bc_k} \sup_{y \in [0,1)} \left(  \sum_{j=-k}^{k} \vert \tau_{j} \vert \vert \phi_j(y) \vert \right)^2 \\
	 	&\leq \sup_{t \in \Bc_k} \sup_{y \in [0,1)} \left( \sum_{j=-k}^{k} \vert \tau_{j} \vert^2 \right) \left( \sum_{j=-k}^{k} \phi_j^2(y) \right)\\
	 	&\leq 2k+1 \eqdef M_1^2.
	 \end{align*}
	 \noindent\emph{Condition concerning $H$:} We have
	 \begin{align*}
	 	\E [ \sup_{t \in \Bc_k} \vert \langle \widehat \Theta_n, t \rangle \vert^2 ]&\leq \sup_{t \in \Bc_k}  \left( \sum_{j=-k}^{k} \vert \tau_{j} \vert^2 \right)\\
	 	&\hspace{1em}\cdot \E \left[ \sum_{j=-k}^k \bigg\vert \frac{1}{n} \sum_{i=1}^{n} \left\lbrace \int_{0}^{1} \phi_j(x)[\dd N_i(x) -\dd \Lambda_i(x)] \right\rbrace  \bigg\vert^2 \right] \\
	 	&\leq \frac{1}{n} \sum_{j=-k}^{k} \var \left( \int_{0}^{1} \phi_j(x) \dd N_1(x) \right)\\
	 	&\leq \frac{1}{n} \sum_{j=-k}^{k} \int_0^1 \phi_j^2(x) \lambda(x) \dd x\\
	 	&\leq \frac{2k+1}{n} \cdot \beta_0,
	 \end{align*}
	 and it follows from Jensen's inequality that we can choose
	 \begin{equation*}
	 	H=\left( (\beta_0 \vee 1) \cdot (2k+1)/ n\right)^{1/2}.
	\end{equation*}
	 \noindent\emph{Condition concerning $\upsilon$:}
	 We have
	 \begin{equation}\label{eq:ups:var}
	 	\var \left(\int_0^1 t(x)\dd N_1(x) \right) = \int_0^1 \vert t(x) \vert^2 \lambda(x)\dd x.
	 \end{equation}
	 Define $\e_j(t)=\exp(2\pi i jt)$ and set $\langle \lambda \rangle_j = \int_0^1 \lambda(t) \e_j(-t)\dd t$ using which the identity $\lambda = \sum_{j \in \Z} \langle \lambda \rangle_j \e_j$ holds.
	 We have
	 \begin{align*}
	 	\vert t(x) \vert^2 &= \left \langle \sum_{i=-k}^{k} \langle t \rangle_{i} \e_i(x), \sum_{j=-k}^{k} \langle t \rangle_{j} \e_j(x) \right \rangle=\sum_{i=-k}^k \sum_{j=-k}^k \langle t \rangle_{i} \overline{\langle t \rangle}_{j} \e_i(x) \e_{-j}(x),
	 \end{align*}
	 and thus by means of~\eqref{eq:ups:var} that $\var \left(\int_0^1 t(x)\dd N_1(x) \right) =  \langle A \langle t \rangle, \langle t \rangle \rangle$, where for $t \in \Bc_k$ we denote by $\langle t \rangle$ the vector $\left( \langle t \rangle_{-k},\ldots,\langle t \rangle_k \right)$ and by $A$ positive semi-definite matrix $A = (\beta_{j-i} )_{j,i=-k,\ldots,k}$.
	 Hence,
	 \begin{align*}
	 	\sup_{t \in \Bc_k} \var \left(\int_0^1 t(x)\dd N_1(x) \right) &\leq \sup_{t \in \Bc_k} \langle A^{1/2} \langle t \rangle, A^{1/2} \langle t \rangle \rangle= \sup_{t \in \Bc_k} \Vert A^{1/2} t \Vert^2= \Vert A \Vert.
	 \end{align*}
	 In order to bound $\Vert A \Vert$, recall for an arbitrary matrix $B=(b_{ij})$ the definitions
	 \begin{equation*}
	 	\Vert B \Vert_1 \defeq \max_j \sum_i \vert b_{ij}\vert \qquad \text{and} \qquad \Vert B \Vert_\infty \defeq \max_i \sum_j \vert b_{ij}\vert.
	 \end{equation*}
	 Note that by the Cauchy-Schwarz inequality we have both $\Vert A\Vert_1\leq \sqrt{2k+1} \Vert \lambda\Vert$ and $\Vert A\Vert_\infty \leq \sqrt{2k+1} \Vert \lambda\Vert$ and hence by the formula $\Vert A \Vert \leq \sqrt{\Vert A\Vert_1 \cdot \Vert A\Vert_\infty}$ (see Corollary~2.3.2 in~\cite{golub1996matrix}) we obtain $\Vert A \Vert \leq \sqrt{2k+1} \cdot \Vert \lambda \Vert$. Thus, we can choose $\upsilon \defeq \sqrt{2k+1} \cdot \Vert \lambda \Vert \cdot (\beta_0 \vee 1)$.
	 
	  The result of the Lemma follows now directly from Proposition~\ref{prop:conc} with $\epsilon = \frac{1}{4}$. 
\end{proof}

%

\printbibliography

\end{document}